\numberwithin{equation}{section}
\newtheorem{theorem}{Theorem}[section]
\newtheorem{proposition}{Proposition}[section]
\newtheorem{lemma}{Lemma}[section]
\newtheorem{remark}{Remark}[section]
\newtheorem{definition}{Definition}[section]
\newtheorem{corollary}{Corollary}[section]
\renewcommand{\epsilon}{\varepsilon}
\newcommand{\abs}[1]{\left\vert #1\right\vert}
\newcommand{\R}{\mathbb{R}}
\newcommand{\N}{\mathbb{N}}
\newcommand{\Z}{\mathbb{Z}}
\newcommand{\T}{\mathcal{T}} 
\newcommand{\PR}{\mathbb{P}}
\newcommand{\reff}{R_{\text{eff}}}
\title[]{Scaling limit for the random walk on critical lattice trees}
\date{}
\author[G.~Ben Arous]{G\'erard Ben Arous}
\address{G\'erard Ben Arous, Courant Institute of Mathematical Sciences, 251 Mercer Street, New York University,
New York, 12012-1185, U.S.A.} \email{benarous@cims.nyu.edu}
\author[M.~Cabezas]{Manuel Cabezas}
\address{Manuel Cabezas\\ Pontificia Universidad Cat\'{o}lica de Chile, Facultad de Matem\'{a}ticas, Campus San Joaqu\'{i}n, Avenida Vicu\~{n}a Mackenna 4860, Santiago, Chile.} \email{mncabeza@mat.uc.cl}
\author[A.~Fribergh]{Alexander Fribergh}
\address{Alexander Fribergh\\Universit\'e de Montr\'eal, DMS\\
Pavillon Andr\'e-Aisenstadt\\     2920, chemin de la Tour Montréal (Qu\'ebec),  H3T 1J4} \email{fribergh@dms.umontreal.ca}
\keywords{Random walk, random environments,  Lattice trees, super-process, spatial tree, lace expansion} \subjclass[2000]{primary 60K37;
secondary 82D30}
\begin{document}

\begin{abstract}
We prove a scaling limit theorem for the simple random walk on critical lattice trees in $\Z^d$, for $d\geq 8$. The scaling limit is the Brownian motion on the Integrated Super-Brownian Excursion (BISE) which is the same one that we have identified earlier for other simpler models of anomalous diffusion on critical graphs in large enough dimension. The proof of this theorem is based on a combination of the tools of lace-expansion (contained in the articles \cite{CFHP} and \cite{CFHP2}), and a new and general convergence theorem. \end{abstract}

\maketitle

\section{Introduction}

The idea that diffusion is expected to be anomalous on critical graphs goes back at least to Pierre-Gilles de Gennes' work in 1976 on the ``ant in the labyrinth", i.e., the random walk on critical percolation clusters, which has been followed by a massive effort in the physics literature to gain insight in the matter (see for instance the classical  works by ~\cite{rammal1983random}, ~\cite{HavlinBenAvraham}, or for a more recent work  ~\cite{Biroli2019}). The mathematics literature has also been very active (since the classical work of  ~\cite{kesten1986subdiffusive} proving sub-diffusivity for the case of critical percolation in dimension 2).  An important milestone has been reached by Kozma and Nachmias in \cite{KN}, where they proved the Alexander-Orbach conjecture, finally establishing that the spectral dimension of the critical percolation cluster in high enough dimension is $\frac{4}{3}$. 

This article is part of our effort  to understand in more detail anomalous diffusion on critical graphs in large enough dimension, where the behavior is supposed to be universal (see for instance  \cite{Barlow_Kumagai}, \cite{Croydon_crt}, \cite{HHH}, \cite{arous2016scaling} or the survey \cite{heydenreich2017progress}). 

We aim at a detailed result i.e., understanding the full scaling limit of random walks on these critical graphs. The natural candidate as the scaling limit is the so-called \emph{BISE} (the Brownian motion on the Integrated Super-Brownian Excursion, see Section~\ref{sect_bise_def} for a formal definition). In our first step ~\cite{BCFa} we gave an abstract theorem ensuring that the scaling limit would indeed be the BISE under a set of four conditions (see Section~\ref{sect_conditions} and Section~\ref{sect_thms}), which we assume would be satisfied in this class of critical random graphs in large enough dimension.

As a first proof of concept, this theorem was applied in~\cite{BCFb} in the context of the simple random walk on the range of critical branching random walks (conditioned on being large). But, in order to deal with more delicate models it appeared that one would have to enlist the heavier tools of the field, i.e. lace expansion, in order to check the four conditions of ~\cite{BCFa}. 

Our goal is now to show that a similar general result can be applied to the entire class of models that can be analyzed using lace expansion. These models include lattice trees above dimension 8 (see e.g.~\cite{haraslade,derbezslade,holmes2008convergence}), critical oriented percolation above dimension 4 (see e.g.~\cite{HHS}) and critical percolation above dimension 6 (see e.g.~\cite{harasladeperc}). Our final goal is to understand the case of percolation and oriented percolation, which is the initial question raised by De Gennes, albeit in high dimension. We start this project here with the case of (spread-out) lattice trees. It is important to note here that this article relies indeed on lace-expansion estimates obtained in the two related articles~\cite{CFHP} and~\cite{CFHP2}.

In the process of building a proof in the context of lattice trees, we observed that under certain very natural assumptions (which are valid for lattice trees), the four conditions needed for convergence towards the BISE can actually essentially be reduced to three. This represents the main work of this article.

We start by presenting the model of the simple random walk on spread-out lattice trees and stating our main convergence result in this context (see Theorem~\ref{main_thm}) since it requires less notations than the general case. We then introduce the notations needed to state our new convergence theorem for critical random graphs in general which will lead up to our new conditions for convergence towards the BISE in Section~\ref{sect_thms}. We then move onto the main proof before applying our results to the lattice tree case in Section~\ref{sect_lattice_trees}.

\subsection{Critical lattice trees}


A lattice tree $T$ is a finite, connected subgraph of $\mathbb{Z}^d$ containing no cycles.  Specifically, we will be considering \emph{spread-out} lattice trees, i.e., subgraphs of $(\Z^d,E_L(\Z^d))$, where \[E_L(\Z^d):=\{[x,y]:x,y\in\Z^d, x\neq y,\|x-y\|_\infty\leq L\}\] is the set of edges of $\Z^d$ whose endpoints are at $\|\cdot\|_\infty$-distance smaller or equal than some constant $L\in\N$. 
 Let us now define a probability distribution on lattice trees. Let $D(\cdot)$ be uniform distribution on a finite box $[-L,L]^d\setminus o$, where $o=(0,\dots,0)\in \Z^d$. That is $D(x)=\frac{1}{(2L)^d-1}$ for $x\in[-L,L]^d\setminus o$ and $0$ otherwise. For a lattice tree $T\ni o$ and $z>0$, define the weight of $T$ as 
 \begin{equation}
 W_{z,D}(T):=z^{|E(T)|}\prod_{[x,y]\in E(T)}D(x-y)=\left(\frac{z}{(2L)^d-1}\right)^{|E(T)|},
 \end{equation} where $E(T)$ is the set of edges of $T$ and $|E(T)|$ is its cardinal.  An important observation is that if $T$ is an edge-disjoint union of subtrees, then $W_{z,D}(T)$ can be factored into a product over the weights of the subtrees.  It turns out (see e.g.~\cite{klein1981rigorous}) that there exists a critical value $z_D$ such that $\rho:=\sum_{T\ni o}W_{z_D,D}(T)<\infty$ and ${\bf E}[|\T|]=\infty$, where ${\bf E}$ denotes expectation with respect to the probability measure ${\bf P}$ defined as ${\bf P}[\T=T]=\rho^{-1}W_{z_D,D}(T)$.  Hereafter we write $W(\cdot)$ for the critical weighting $W_{z_D,D}(\cdot)$ and suppose that we are selecting a random tree $\T$ according to this critical weighting.

We denote $d_T(\cdot,\cdot)$ the graph distance of a lattice tree $T$. We can then define the height of a lattice tree $T$ as
\[
H(T)=\max\{d_T(o,x) \text{ with } x\in T\}.
\]

Let $h>0$ and we set ${\bf P}^h_{n}[\hspace{0.1 cm}\cdot\hspace{0.1 cm}]:={\bf P}[\hspace{0.1 cm}\cdot\mid H(T)\geq hn]$ and denote $T_n$ a random tree chosen according to that measure.

Given a  fixed lattice tree $T$, we are able to define the simple random walk started at $o$ on that tree.  We denote this process $(X_k^T)_{k\geq 0}$ and its corresponding law $P^{T}$.

In this article, we will give scaling limit results on this random walk under the annealed measure $\PR_n^h[\cdot]={\bf E}_n^h[P^{T_n}[\cdot]]$. More precisely, our main result on lattice trees is the following.
\begin{theorem}\label{main_thm}
Fix $d>8$. There exists $L_0(d)$ such that for all $L\geq L_0$ there exists constants $C_1, C_2 \in (0,\infty)$ such that, for each $h>0$, 
\[
(n^{-1/2}X^{T_n}_{n^3t})_{t \geq 0}\stackrel{n\to\infty}{\to} (C_1 B^{h\text{-ISE}}_{C_2 t})_{t\geq 0},
\]
where the convergence is annealed (under  $\PR^h_{ n}$) and occurs with the topology of uniform convergence over compact sets of time. The limiting process $B^{h\text{-ISE}}$ is the Brownian motion on the \emph{integrated super-Brownian excursion} conditioned to have survived for time at least $h$ and is defined in Proposition~\ref{propdef_BISE}.
\end{theorem}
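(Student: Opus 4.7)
The plan is to deduce Theorem~\ref{main_thm} from the general abstract convergence criterion announced in Section~\ref{sect_thms}, which reduces the problem to verifying three conditions on the rescaled random environment. Since a lattice tree is a genuine tree, the intrinsic effective resistance between two vertices equals their graph distance, so the electric-network part of the assumptions trivializes; what remains is to prove the joint convergence of the shape, the volume, and the embedding of the conditioned lattice tree $T_n$ toward the $h$-ISE at scales $(\text{tree-distance},\text{space},\text{volume})=(n,n^{1/2},n^{2})$. This is precisely the regime in which the scaling limit of branching-random-walk-like objects produces the super-Brownian excursion, so the time scaling $n^{3}$ in the conclusion is forced by the usual Einstein relation $(\text{time})\asymp (\text{resistance})\times(\text{volume})=n\cdot n^{2}$.

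The order of steps I would carry out is the following. First, I would recall the conditioned BISE and its construction, identifying the precise topological framework (Gromov--Hausdorff--Prokhorov-type convergence of measured spatial trees) in which the abstract theorem of Section~\ref{sect_thms} is phrased. Second, I would specify the rescaling map sending the tree $T_n$ equipped with its graph distance, its counting measure, and its natural embedding into $\mathbb{Z}^d$ into this topological space, with scaling factors $n^{-1}$, $n^{-2}$, and $n^{-1/2}$ respectively. Third, I would invoke the lace-expansion results of \cite{CFHP,CFHP2} in the form required to verify the three conditions: (i) convergence of the finite-dimensional distributions of the random embedded tree (essentially the $r$-point function estimates, which yield the ISE via its moment characterization), (ii) a uniform volume/tightness bound controlling the number of vertices in intrinsic balls, and (iii) the joint control of two independent pivotal backbones (needed so that the random walk does not make macroscopic excursions that are invisible in the embedded limit). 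Fourth, once these three conditions are in place, the abstract theorem outputs the annealed scaling limit $(C_{1}B^{h\text{-ISE}}_{C_{2}t})_{t\ge 0}$ with constants $C_{1},C_{2}$ read off from the variance of $D$ and the diffusion constant in the lace-expansion two-point function.

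The main obstacle will be verifying the third condition, which replaces the delicate resistance/escape-probability estimate that in \cite{BCFa} required an independent argument. In the lattice-tree setting one must show that, under $\mathbf{P}^{h}_{n}$, the walk behaves at every scale like a diffusion on a space whose volume growth, metric structure, and spatial embedding have already been coupled in the previous step; concretely, this reduces to an equicontinuity statement for the hitting probabilities of mesoscopic backbones, derived from the subtree decomposition and the two-point function asymptotics of \cite{CFHP2}. The conditioning $H(T)\ge hn$ introduces a further subtlety, handled by the standard size-biasing/spine construction which in the lace-expansion framework corresponds to conditioning the $r$-point function on survival up to intrinsic height $hn$; pushing this conditioning through the three verification steps is the bookkeeping that occupies the bulk of Section~\ref{sect_lattice_trees}. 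Once all three conditions are checked, Theorem~\ref{main_thm} follows immediately from the abstract convergence theorem of Section~\ref{sect_thms}.
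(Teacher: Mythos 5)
Your overall strategy---deduce Theorem~\ref{main_thm} from the abstract three-condition theorem of Section~\ref{sect_thms} (Theorem~\ref{thm:height2}), note that condition $(R)_1$ is trivial because effective resistance equals graph distance on a tree, and import the remaining geometric input from the lace-expansion papers \cite{CFHP} and \cite{CFHP2}---is exactly the paper's route, and your identification of the scales $(n,n^{1/2},n^2)$ and of the time scale $n^3$ via resistance times volume is correct. However, your enumeration of the conditions to be verified does not match the theorem you are invoking, and this creates a genuine gap. The three conditions of Theorem~\ref{thm:height2} are $(G)^{h,+}_{\sigma_d,\sigma_\phi,\nu}$ (convergence of the $K$-skeletons as graph spatial trees, jointly with convergence of $\abs{E(G_n)}/n^2$), $(S)$ (the sausages hanging off the skeleton are uniformly small in both the Euclidean and the intrinsic metric), and $(R)_\rho$. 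Your condition (iii), the ``joint control of two independent pivotal backbones,'' is not one of them, and your claim that it ``replaces the delicate resistance/escape-probability estimate'' of \cite{BCFa} misidentifies what this paper removes: the condition that is eliminated is the volume-distribution condition $(V)$, not a resistance estimate, and it is eliminated by the general argument of Sections~\ref{sect_gv}--\ref{sect_gv2}, not by anything done in Section~\ref{sect_lattice_trees}.

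More importantly, you omit the hypothesis that makes the three-condition theorem applicable at all: the spanning points $(V_i^n)$ must be chosen \emph{according to uniform edge-volume} (Definition~\ref{def_edge_unif}). For lattice trees one takes the $V_i^n$ i.i.d.\ uniform on the vertices and must check that this choice satisfies the edge-volume property; this is easy (in a tree $\abs{E(B^*)}=\abs{B}-1$ for connected $B$, so vertex-uniform and edge-uniform measures differ by $O(1/\abs{G_n})$), but it is a required step, and without it condition $(V)$ cannot be dropped and you would be back to the four-condition Proposition~\ref{thm:height}. Finally, your remark that the conditioning $H(T)\geq hn$ occupies ``the bulk of Section~\ref{sect_lattice_trees}'' mislocates the work: that section is a short citation exercise (condition $(G)^{h,+}_{1,\sigma_0,C_0}$ from Remark~18 of \cite{CFHP}, condition $(S)$ from Theorem~19(b),(c) of \cite{CFHP}), and the substantive contribution of the paper is the model-independent proof that condition $(G)$ together with the uniform edge-volume choice of points implies condition $(V)$ (Proposition~\ref{prop_cond_V}).
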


\subsection{Simple conditions for the convergence towards the \emph{Brownian motion on the ISE}}\label{sect_intro_cond}

Let us now discuss our theorem for simple random walks on general critical graphs in high dimensions, without going into full details because the notations needed for the precise statement are quite involved. 

In the initial paper~\cite{BCFa}, we established a very general framework for a sequence $G_n$ of random subgraphs of $\Z^d$, within which one can prove convergence of the simple random walk on the graphs towards the BISE. In the article \cite{BCFb}, that framework was applied to prove convergence of simple random walks on critical branching random walks in high dimensions towards the  BISE.

This type of convergence result is proved by verifying four conditions called $(G)$, $(V)$, $(R)$ and $(S)$. Those conditions depend on the choice of certain random points $(V_i)_{i\in\N}=(V_i^n)_{i\in\N}$ on the graphs $G_n$, which are used to span subtrees from $G_n$. More precisely, for each $K\in\N$, we will consider the \emph{skeleton} $\T^{(n,K)}$ as the minimal subgraph of $G_n$ containing $o$ and $V_i,i=1,\dots, K$(for more details see Section~\ref{sect_conditions}).  

Our main contribution in this article is that, for a specific choice of the random spanning points $(V_i)_{i\in\N}$,  one of the four conditions simplifies substantially. Our assumption on the points
$(V_i)_{i\in\N}$ is that they are chosen \emph{uniform according to edge-volume}, which, simply put, says that the distribution of each $V_i$ is asymptotically equal to the one given by first choosing an edge uniformly at random, and then to choose one of the end-vertices of the edge, uniformly and independently.  The formal statement of the main result can be found at Theorem~\ref{thm:height2}.


Let us describe informally the conditions appearing in this improved theorem.
\begin{itemize}
\item Condition $(G)$: The geometric convergence of the (rescaled) skeletons $\T^{(n,K)}$ towards the skeletons taken on the integrated super-Brownian excursion.  This condition was proved for lattice trees in~\cite{CFHP}, using the \emph{historical measure-valued process} convergence to the super Brownian Motion obtained in ~\cite{CFHP2}. We believe that the argument used in~\cite{CFHP} for lattice trees could be carried over to other models commonly studied using lace expansion techniques such as critical percolation.
\item Condition $(R)$: The asymptotic proportionality between the effective electrical resistance between two vertices of $G_n$ and the graph distance between these two vertices. Since the effective resistance governs the transition probabilities of a simple random walk, this second condition ensures that the random walk has (asymptotically) symmetric transition probabilities. This condition holds trivially for (lattice) trees.
\item Condition $(S)$: For any $\epsilon>0$, there exists a $K$ such that for $n$ large enough the graphs $G_n$ (properly rescaled) are within an $\epsilon$-neighborhood of the skeleton $\T^{(n,K)}$ in the Euclidean distance and also in the intrinsic distance (the article \cite{CFHP} shows why this result holds in the case of lattice trees). 
\end{itemize}

Let $E(G_n)$ denote the set of edges of $G_n$. Our main result (expressed in two different ways, corresponding to different conditionings in Theorems \ref{thm:height2} and \ref{thm:volume}) is that, for a sequence of random graphs together with random points  $(G_n,(V_i^n)_{i\in\N})_{n\in\N}$, where the $(V_i^n)_{i\in\N}$ are chosen according to uniform edge volume, it holds that  
\begin{enumerate}
\item Under conditions $(G)$, $(S)$ and $(R)$, we have
\[
(n^{-1/2}X^{G_n}_{n\abs{E(G_n)}t})_{t \geq 0}\stackrel{n\to\infty}{\to} (C_1 B^{\text{ISE}}_{c_2t})_{t\geq 0}
\]
where $C_1,c_2$ are positive constants.
\item If, in addition, there exists $\nu>0$ such that $\frac{\abs{E(G_n)}}{n^2}$ converges to $\nu$ in probability, we have
\[
(n^{-1/2}X^{G_n}_{n^3t})_{t \geq 0}\stackrel{n\to\infty}{\to} (C_1 B^{\text{ISE}}_{\nu^{-1}c_2t})_{t\geq 0},
\]
\end{enumerate}
where the convergences are annealed and occur with the topology of uniform convergence over compact sets of time.

The contribution of this article can be roughly summarized by saying that, when the points $(V_i)_{i\in\N}$ are chosen uniform according to edge volume, condition $(V)$ can be replaced by a much weaker estimate on the the overall edge volume of the graph. Loosely speaking, Condition $(V)$ states that for $K$ large enough, the edge volume of $G_n$ is (asymptotically) distributed evenly over the skeleton $\T^{(n,K)}$ and also there exists a constant $\nu>0$ such that $\frac{\abs{E(G_n)}}{n^2}$ converges to $\nu$ in probability.

\subsection{Organization of the paper}

We start by two sections of notations necessary to state the conditions on random graphs that are relevant for proving that the random walk on them scales to the BISE.

In Section \ref{sect_bise_def}, we will introduce the Integrated super-Brownian excursion \emph{ISE} and the Brownian motion on it. 

In Section \ref{sect_abstract_theorem} we will show how to span certain subtrees from large random graphs. These subtrees, called the \emph{skeletons}, are finite dimensional and will be used to approximate the original random graphs.

 Then, in Section~\ref{sect_conditions}, we introduce all four conditions ($(G)$, $(V)$, $(R)$ and $(S)$) from~\cite{BCFa} that will be needed to state our main theorems.
  
Section \ref{sect_thms} contains the rigorous statement of the main theorems of this paper, which give the convergence of the random walks on the graphs to the  BISE.  As we have mentioned, in this article we prove that, when the spanning points are chosen \emph{according to uniform edge volume}, conditions $(G)$, $(R)$ and $(S)$ are enough to get convergence towards the BISE (see Theorems \ref{thm:height2} and \ref{thm:volume}).

The next sections (Section 6 and Section 7) are devoted to the main proof of this article which states that: when the points $(V_i)_{i\in \N}$ are chosen  according to uniform edge volume, Condition $(G)$ essentially implies Condition $(V)$ (see~Proposition \ref{prop_cond_V} in this article). Condition $(V)$ is about understanding the convergence of the uniform measure on $G_n$ (see Definition \ref{def_condV}). In Section \ref{sect_gv} we construct an empirical measure in $G_n$, which, on the one hand, is a good approximation of the uniform measure in $G_n$ and, on the other hand, is \emph{finite-dimensional} in the sense that it depends only on geometric information of the skeleton of the graph. In Section \ref{sect_gv2} we control this empirical measure using Condition $(G)$ which allows us to obtain Condition $(V)$.

Finally, in Section~\ref{sect_lattice_trees}, we will apply Theorem~\ref{thm:height2} to the case of critical lattice trees. This proves Theorem~\ref{main_thm}.

\subsection{Notations}

Let $G$ be a graph. For $A$ a finite subgraph of $G$, we denote
\[
\partial^{(in)} A=\{x\in A,\ \text{ there exists $y\in G\setminus A$ such that $x\sim_G y$}\},
\]
\[
\partial^{(ex)} A=\{x\notin A,\ \text{ there exists $y\in  A$ such that $x\sim_G y$}\},
\]
\[
\partial_E A=\{(x,y)\in E(G),\ \text{ for $x\in A$ and $y\in G\setminus A$}\}.
\]

Given a graph $G$, we will denote $V(G)$ the set of its vertices and $E(G)$ the set of its edges. For $x\in G$ and $k\in \R$, we will write $B_G(x,k)$ for the ball of radius $k$ centered at $x$ in the natural metric induced by $G$.

The constants in this paper will typically be denoted $c$ (for lower bounds) and $C$ (for upper bounds) and implicitly assumed to be positive and finite. Their value may change from line to line.

\section{Brownian motion on super-brownian motions}\label{sect_bise_def}

The goal of this section is to introduce the Brownian motion taking values in (the support of) certain versions of the super-Brownian motion. In particular, we will introduce the \emph{Continuum Random Tree} ($CRT$), the \emph{Integrated Super-Brownian Excursion} ($ISE$) and the \emph{Brownian motion on the ISE} ($B^{\text{ISE}}$) and their counterparts obtained by conditioning on height. The presentation is taken from notes of Le Gall (see~\cite{LG}).

\subsection{The continuum random tree (CRT) and random trees conditioned on height}\label{sect:crt}

Denote by $({\bf e}(t))_{0\leq t\leq 1}$ a normalized Brownian excursion. Informally, $({\bf e}(t))_{0\leq
t\leq 1}$ is just a Brownian path started at the origin and conditioned to stay positive
over the time interval $(0,1)$, and to come back to $0$ at time $1$ (see e.g.~Sections 2.9
and 2.12 of It\^o and McKean \cite{IM} for a discussion of the normalized excursion). We
extend
the definition of ${\bf e}$ by setting ${\bf e}(t)=0$ if $t>1$.

For every $s,t\geq 0$, we set
\[
m_{\bf e}(s,t)=\inf_{r\in[s\wedge t,s\vee t]}{\bf e}(r),
\]
and, the pseudo-metric
\[
d_{\bf e}(s,t)={\bf e}(s)+{\bf e}(t)-2m_{\bf e}(s,t).
\]

We then introduce the equivalence relation
$s\sim t$ iff $d_{\bf e}(s,t)=0$ (or equivalently iff ${\bf e}(s)={\bf e}(t)=m_{\bf e}(s,t)$). Let
$T_{\bf e}$ be the quotient space
\begin{equation}\label{def_equiv_tree}
T_{\bf e}=[0,\infty)/ \sim.
\end{equation}
 For any fixed realization $e$ of the excursion $\bf e$, the pseudo-metric $d_{ e}$ becomes a metric in the quotient space $T_{ e}$ and the metric space $(T_{ e},d_{ e})$, is a \emph{Real Tree}.\footnote{In the sense that $(i)$ For $x,y\in T_{e}$, there exists a unique isometric embedding $f_{x,y}:[0,d_{e}(x,y)]\to T_e$ with $f_{x,y}(0)=a$ and $f_{x,y}(d_{e}(x,y))=b$ and $(ii)$ If $q$ is a continuous injection $q:[0,1]\to T_e$ with $q(0)=x$, $q(1)=y$ we have that $q([0,1])=f_{x,y}([0,d_e(x,y)])$.}

\begin{definition}
\label{CRTdef}
The random real tree $T_{\bf e}$ is called the Continuum Random Tree (CRT) and will be  denoted $(\mathfrak{T},d_{\mathfrak{T}})$. We write $\Xi$ to denote its law.
\end{definition}

The CRT was initially defined by Aldous \cite{Al1} with a different formalism,
but the preceding definition corresponds to Corollary 22 in \cite{Al3}, up to an unimportant
scaling factor $2$.

We can define a natural volume measure on $\mathfrak{T}$ by projecting the Lebesgue measure on $[0,1]$, i.e., for any open $A\subseteq \mathfrak{T}$, we set
\[
\lambda_{\mathfrak{T}}(A)=\text{Leb}\{t\in [0,1], [t]\in A\},\label{eq:defoflambda}
\]
where $[t]$ denotes the equivalence class of $t$ with respect to the relation defined at~\eqref{def_equiv_tree}.

 Let $({\bold e}^h(t))_{t\geq0}$ be a Brownian excursion conditioned on reaching level $h$, i.e., $\max_{t\geq0} {\bold e}^h(t)\geq h$. More precisely, let $(W_t)_{t\geq 0}$ be a standard Brownian motion and $\tau_h=\inf\{t\geq0:W_t=h\}$. Let $I=[a,b]$ be the interval $[\sup\{t\leq\tau_h:W_{t}=0\},\inf\{t\geq\tau_h:W_{t}=0\}]$. Now, let 
 ${\bold e}^h(t)=W(t+a)$, $t\in[0,b-a]$. For $t\geq b-a$, we set ${\bold e}^h(t)=0$.
  \begin{definition}
\label{CRThdef}
The random real tree $T_{{\bf e}^h}$ is called the continuum random tree conditioned on height (written $h$-CRT) and will be denoted $(\mathfrak{T}^h,d_{\mathfrak{T}^h})$. We write $\Xi^h$ to denote its law.

We associated to it the natural volume measure $\lambda_{\frak{T}^h}$ by setting for any open $A\subseteq \mathfrak{T}^h$
\[
\lambda_{\mathfrak{T}^h}(A)=\text{Leb}\{t\in [0,\abs{\mathfrak{T}^h}], [t]\in A\},
\]
where $\abs{\mathfrak{T}^h}=\sup\{t\geq 0, {\bf e}^h(t)>0\}.$
\end{definition}

\subsection{The integrated super-Brownian excursion (ISE) and its counterpart conditioned on height}\label{sect:ISE}

We may consider the $\R^d$-valued Gaussian process
$(\phi_{\mathfrak{T}}(\sigma),\sigma\in \mathfrak{T})$ whose distribution is characterized by
\begin{align*}
&\mathbb{E}[\phi_{\mathfrak{T}}(\sigma)]=0\;,\\
&{\rm cov}(\phi_{\mathfrak{T}}(\sigma),\phi_{\mathfrak{T}}(\sigma'))=d_{\mathfrak{T}}(\text{root},\sigma\wedge \sigma')\,{\rm Id}\;,
\end{align*}
where ${\rm Id}$ denotes the $d$-dimensional identity matrix. This process has a continuous modification (see (8) in~\cite{LG} for details). 
If we replace $\frak T$ by $\frak T^h$ in the definition above, we get an $\mathbb{R}^d$-valued Gaussian Process $(\phi_{\frak{T}^h}(\sigma))_{\sigma\in\frak{T}^h}$.

Given $\mathfrak{T}$, we denote $Q_{\mathfrak{T}}$ the law of $(\mathfrak{T},(\phi_{\mathfrak{T}}(\sigma),\sigma\in \mathfrak{T}))$ and we denote the joint annealed law by
\[
M=\int \Xi(d\mathfrak{T})\,Q_{\mathfrak{T}}.
\]

\begin{definition}
The random probability measure $\lambda_{\phi_{\mathfrak{T}}(\mathfrak{T})}$ on $\R^d$ defined under $M$ by $\lambda_{\phi_{\mathfrak{T}}(\mathfrak{T})}:=\lambda_{\mathfrak{T}} \circ \phi_{\mathfrak{T}}^{-1}$ is called $d$-dimensional ISE (for Integrated Super-Brownian Excursion).
\end{definition}

 The random measure
ISE was first discussed by Aldous~\cite{Al4}.
For $d\geq4$, the topological support of ISE is the range
of $\phi_{\frak{T}}$. Moreover, the measure  $\lambda_{\phi_{\mathfrak{T}}(\mathfrak{T})}$ should be interpreted as the
uniform measure on the range $\phi_{\frak{T}}(\mathfrak{T})$ (see \cite{perkins1988space}, \cite{perkins1989hausdorff} and also \cite{delmas1999some}). We will often abuse the terminology and write ISE to mean its topological support.
Using similar definitions, given $\mathfrak{T}^h$, we denote $Q_{\mathfrak{T}^h}$ the law of $(\mathfrak{T}^h,(\phi_{\mathfrak{T}^h}(\sigma),\sigma\in T))$ and the joint annealed law by
\[
M^h=\int \Xi^h(d\mathfrak{T}^h)\,Q_{\mathfrak{T}^h}.
\]

\begin{definition}
The random measure $\lambda_{\phi_{\mathfrak{T}^h}(\mathfrak{T}^h)}$ on $\R^d$ defined under $M^h$ by $\lambda_{\phi_{\mathfrak{T}^h}(\mathfrak{T}^h)}:=\lambda_{\mathfrak{T}^h} \circ \phi_{\mathfrak{T}}^{-1}$ is called $d$-dimensional Super-Brownian motion conditioned on height (written $h$-ISE).
\end{definition}

\begin{remark} Note that the measure $\lambda_{\phi_{\mathfrak{T}^h}}$ is not a probability measure in general, but its support is exactly $\phi_{\mathfrak{T}^h}(\mathfrak{T}^h)$, see \cite{delmas1999some}. 
\end{remark}

\subsection{The Brownian motion on the ISE: $B^{\text{ISE}}$}\label{sssbmise}

In~\cite{Al2}, Aldous gave a set of properties defining uniquely a process which corresponds to our intuition of a Brownian motion on $(T,d_{T},\nu)$ where $(T,d_T)$ is a real tree and $\nu$ will correspond to an invariant measure of said Brownian motion. 

The existence of a such process $B^{\text{CRT}}$ on the CRT was  proved by Krebs~\cite{krebs}. After, this was generalized using techniques of resistance forms (see~\cite{Kigami_Harm} for an introduction on resistance forms). More specifically, it was proved in Section 6 of~\cite{Croydon_arc} that
\begin{proposition}\label{prop_def_process}
Let $(T,d_{T})$ be a compact real tree, $\nu$ be a finite Borel measure on $\T$ that satisfies $\nu(A)>0$ for every non-empty open set $A\subseteq T$, and $(\mathcal{E}_{T},\mathcal{F}_{T})$ be the resistance form associated with $(T,d_{T})$. Then $(\frac 12 \mathcal{E}_{T},\mathcal{F}_{T})$ is a local, regular Dirichlet form on $L^2(T, \nu)$, and the corresponding Markov process $B^{T,\nu}$ is the Brownian motion on $(T,d_{T},\nu)$.
\end{proposition}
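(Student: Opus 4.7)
My plan is to split the statement into two parts: first, that $(\tfrac12 \mathcal{E}_T, \mathcal{F}_T)$ is a local, regular Dirichlet form on $L^2(T,\nu)$; second, that the associated Hunt process matches Aldous's axiomatic characterization of Brownian motion on $(T, d_T, \nu)$ recalled at the start of Section~\ref{sssbmise}.

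For the first part, the natural route is through Kigami's theory of resistance forms. A compact real tree has the convenient feature that its intrinsic metric $d_T$ coincides with the effective resistance metric associated to $\mathcal{E}_T$: any two points are joined by a unique arc, so $T$ behaves as a pure series circuit and the resistance form can be described explicitly via integrals of derivatives along arcs. One then verifies the hypotheses of Kigami's representation theorem (see~\cite{Kigami_Harm}): $T$ is compact and $\nu$ is a finite Borel measure charging every nonempty open set. The conclusion is that $(\tfrac12\mathcal{E}_T, \mathcal{F}_T)$ is a regular Dirichlet form on $L^2(T,\nu)$. Locality is immediate from the tree structure: removing any interior point disconnects $T$, so two functions with disjoint supports never share a current path and the energy decouples.

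For the second part, I would invoke the uniqueness in Aldous's characterization from~\cite{Al2}. One must check that the Hunt process $B^{T,\nu}$ produced by the Dirichlet form is continuous, reversible with respect to $\nu$, strongly Markov, and that its behaviour on each arc, observed until first exit, coincides in law with a time-changed one-dimensional Brownian motion with scale $d_T$ and speed measure $\nu$ restricted to the arc. Continuity and reversibility are standard consequences of locality and symmetry of the Dirichlet form, while the strong Markov property is built into the Hunt construction. The arc restriction reduces to a trace computation: the trace of $\mathcal{E}_T$ onto an arc $[x,y]\subseteq T$ equals the Dirichlet form of a one-dimensional diffusion with scale $d_T$ and speed $\nu|_{[x,y]}$, and by the Feller--McKean classification this trace form generates the desired one-dimensional process. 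Aldous's uniqueness theorem then forces $B^{T,\nu}$ to coincide with the Brownian motion on $(T,d_T,\nu)$.

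The main obstacle, I expect, lies in applying Kigami's machinery without any self-similarity or local-finiteness assumption on $T$: a CRT-type real tree has an everywhere-dense set of branch points of infinite order, and although the resistance form is natural, density of $\mathcal{F}_T$ in $C(T)$ and regularity of the form on $L^2(T,\nu)$ require a careful approximation argument, typically through an increasing sequence of finite subtrees on which $\nu$ is replaced by a discrete measure and a compatible family of finite-state Markov chains is taken to a monotone limit. Once regularity has been established, the identification with Aldous's process follows without further work. Fortunately this approximation scheme is carried out in full detail in Section~6 of~\cite{Croydon_arc}, so the proof ultimately amounts to quoting that construction and noting that its hypotheses are exactly those imposed here.
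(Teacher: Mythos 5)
The paper offers no proof of this proposition: it is quoted verbatim as a result established in Section~6 of~\cite{Croydon_arc}, which is exactly where your argument also ends up, so your proposal matches the paper's approach. Your sketch of the underlying mechanism (Kigami's resistance-form theory on a compact resistance metric space for the Dirichlet-form part, Aldous's axiomatic characterization for the identification of the process) is the correct one; the only slip is the aside that a CRT-type tree has branch points of infinite order, whereas the CRT's branch points are a.s.\ of degree three (cf.\ Theorem 4.6 of~\cite{DuLG}), but this does not affect the argument.
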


 It was shown in~\cite{Croydon_arc} that for $d\geq 8$ the CRT and the ISE are isometric which allows us to build the $B^{\text{ISE}}$  by embedding the $B^{\text{CRT}}$. This is summed up in the following proposition. Let $d_{\phi_{\frak T}(\frak T)}$ be a distance defined in $\phi_{\frak T}(\frak T)$ as $d_{\phi_{\frak T}(\frak{T})}(x,y):=d_{\frak{T}}(\phi^{-1}_{\frak{T}}(x),\phi^{-1}_{\frak{T}}(y))$ for all $x,y\in\phi_{\frak{T}}(\frak{T})$.

\begin{proposition}\label{propdef_BISE} For $\Xi$-a.e.~$\mathfrak{T}$, the Brownian motion $B^{\text{CRT}}$ on $(\mathfrak{T},d_{\mathfrak{T}},\lambda_{\mathfrak{T}})$ exists. Furthermore if $d\geq 8$, for $M$-a.e.~ $(\mathfrak{T},\phi_{\frak T})$, the Brownian motion $B^{\text{ISE}}$ on $(\phi_{\mathfrak{T}}(\mathfrak{T}),d_{\phi_{\mathfrak{T}}(\mathfrak{T})},\lambda_{\phi_{\frak T}(\mathfrak{T})})$ exists and, moreover, $B^{\text{ISE}}=\phi_{\mathfrak{T}}(B^{\text{CRT}})$.
\end{proposition}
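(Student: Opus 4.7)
The plan is to invoke Proposition~\ref{prop_def_process} twice: once directly on the CRT to obtain $B^{\text{CRT}}$, and once on the ISE — where the hypotheses are transported through the isometric homeomorphism $\phi_{\mathfrak{T}}$ that exists for $d\geq 8$.

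For the first assertion, I would verify the three hypotheses of Proposition~\ref{prop_def_process} for $(\mathfrak{T},d_{\mathfrak{T}},\lambda_{\mathfrak{T}})$. Compactness of $\mathfrak{T}$ is immediate since it is the image of $[0,1]$ under the continuous projection $\pi$ associated with~\eqref{def_equiv_tree}. The measure $\lambda_{\mathfrak{T}}$ is a Borel probability measure by construction as the pushforward of Lebesgue measure on $[0,1]$. The full-support property $\lambda_{\mathfrak{T}}(A)>0$ for every nonempty open $A\subseteq \mathfrak{T}$ reduces, via $\lambda_{\mathfrak{T}}(A)=\operatorname{Leb}(\pi^{-1}(A))$, to showing that $\pi^{-1}(A)$ has positive Lebesgue measure; this follows from continuity of $\pi$ and the standard fact that $\Xi$-a.s.\ the normalized excursion $\mathbf{e}$ admits no nondegenerate flat piece, so $\pi^{-1}(A)$ contains a nonempty open subset of $[0,1]$. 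Proposition~\ref{prop_def_process} then yields the Brownian motion $B^{\text{CRT}}$.

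For the second assertion, for $d\geq 8$, I would invoke the Hausdorff-dimension computation $\dim_H(\phi_{\mathfrak{T}}(\mathfrak{T}))=4<d$ for $d$-dimensional ISE (this is where the dimensional restriction enters, and is the deep ingredient, referenced in~\cite{Croydon_arc} and ultimately going back to~\cite{perkins1988space, perkins1989hausdorff, delmas1999some}). This implies $M$-a.s.\ that $\phi_{\mathfrak{T}}$ is injective on $\mathfrak{T}$, so the prescription $d_{\phi_{\mathfrak{T}}(\mathfrak{T})}(x,y):=d_{\mathfrak{T}}(\phi_{\mathfrak{T}}^{-1}(x),\phi_{\mathfrak{T}}^{-1}(y))$ turns $\phi_{\mathfrak{T}}$ into an isometric homeomorphism from $(\mathfrak{T},d_{\mathfrak{T}})$ onto $(\phi_{\mathfrak{T}}(\mathfrak{T}),d_{\phi_{\mathfrak{T}}(\mathfrak{T})})$, which is therefore itself a compact real tree; moreover $\lambda_{\phi_{\mathfrak{T}}(\mathfrak{T})}=\phi_{\mathfrak{T}*}\lambda_{\mathfrak{T}}$ is a finite Borel measure of full support on $\phi_{\mathfrak{T}}(\mathfrak{T})$. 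A second application of Proposition~\ref{prop_def_process} gives the Brownian motion $B^{\text{ISE}}$. Finally, since isometries of compact real trees pull back resistance forms to resistance forms and preserve $L^2$-spaces up to the pushforward of measures, the Dirichlet forms $(\tfrac12\mathcal{E}_{\mathfrak{T}},\mathcal{F}_{\mathfrak{T}})$ and $(\tfrac12\mathcal{E}_{\phi_{\mathfrak{T}}(\mathfrak{T})},\mathcal{F}_{\phi_{\mathfrak{T}}(\mathfrak{T})})$ correspond under $\phi_{\mathfrak{T}}$, so by the uniqueness clause of the Dirichlet-form construction the processes $B^{\text{ISE}}$ and $\phi_{\mathfrak{T}}(B^{\text{CRT}})$ coincide in law.

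The routine part is the verification of the three conditions in Proposition~\ref{prop_def_process} together with the transport of structures under the isometry. The only nontrivial input is the a.s.\ injectivity of $\phi_{\mathfrak{T}}$ in dimensions $d\geq 8$, which I would not reprove but cite from~\cite{Croydon_arc}; this is both the source of the dimensional restriction and the conceptual heart of the statement.
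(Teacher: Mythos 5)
Your overall architecture matches the paper's, which in fact offers no proof at all: it simply states that the existence of $B^{\text{CRT}}$ and the isometry between the CRT and the ISE for $d\geq 8$ are taken from \cite{krebs} and \cite{Croydon_arc}. Your verification of the hypotheses of Proposition~\ref{prop_def_process} for $(\mathfrak{T},d_{\mathfrak{T}},\lambda_{\mathfrak{T}})$ is fine (indeed, for full support you do not even need the absence of flat pieces of $\mathbf{e}$: continuity and surjectivity of the projection $[0,1]\to\mathfrak{T}$ already make $\pi^{-1}(A)\cap[0,1]$ a nonempty relatively open set, hence of positive Lebesgue measure), and the transport of the compact real tree structure, the full-support measure, and the resistance form through the isometry $\phi_{\mathfrak{T}}$ is routine once injectivity is granted.

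There is, however, a genuine error in your stated reason for the key step. The implication ``$\dim_H(\phi_{\mathfrak{T}}(\mathfrak{T}))=4<d$ implies $\phi_{\mathfrak{T}}$ is injective'' is false: the range of the ISE has Hausdorff dimension $4$ already for all $d\geq 4$, so if your argument were valid the proposition would hold for every $d\geq 5$, contradicting the threshold $d\geq 8$ that you yourself identify as essential. Having dimension less than $d$ only says the range is Lebesgue-null; injectivity is a statement about self-intersections, and the relevant dimension count is that two independent pieces of the tree each map to sets of dimension $4$, which are a.s.\ disjoint precisely when $4+4\leq d$, i.e.\ $d\geq 8$. The correct input is therefore the absence of double points for super-Brownian motion in dimension $d\geq 8$ (Perkins, \cite{perkins1990polar}), which is what \cite{Croydon_arc} invokes. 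Since you ultimately defer to \cite{Croydon_arc} for injectivity rather than reproving it, the proof can be repaired by simply deleting the incorrect dimension argument and citing the double-point result; but as written the justification you give for the conceptual heart of the statement would prove too much and is not valid.
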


It is straightforward to generalize this statement to prove that $\phi_{{\frak T}^h}$ is injective as long as $d\geq 8$.  This allows us to state the following proposition. Let $d_{\phi_{\frak {T}^h}(\frak {T}^h)}$ be a distance in $\phi_{\frak {T}^h}(\frak {T}^h)$ defined as $d_{\phi_{\frak {T}}(\frak{T})}$ but, with $\frak{T}^h$ in place of $\frak{T}$.

\begin{proposition}\label{propdef_BISE2} For $\Xi^h$-a.e.~$\mathfrak{T}^h$, the Brownian motion $B^{h\text{-CRT}}$ on $(\mathfrak{T}^h,d_{\mathfrak{T}^h},\lambda_{\mathfrak{T}^h})$ exists. Furthermore if $d\geq 8$, for a.e.~ $(\mathfrak{T}^h,\phi_{\frak{T}^h})$, the Brownian motion $B^{h\text{-ISE}}$ on $(\phi_{\mathfrak{T}^h}(\mathfrak{T}^h),d_{\phi_{\mathfrak{T}^h}(\mathfrak{T}^h)},\lambda_{\phi_{\mathfrak{T}^h}(\mathfrak{T}^h)})$ exists and, moreover, $B^{h\text{-ISE}}=\phi_{\mathfrak{T}^h}(B^{\text{h-CRT}})$.
\end{proposition}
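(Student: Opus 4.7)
The plan is to reduce Proposition~\ref{propdef_BISE2} to Proposition~\ref{propdef_BISE} via a Brownian-scaling absolute-continuity argument. The existence of $B^{h\text{-CRT}}$ is then a direct application of Proposition~\ref{prop_def_process} to the triple $(\mathfrak{T}^h, d_{\mathfrak{T}^h}, \lambda_{\mathfrak{T}^h})$. Indeed, since ${\bf e}^h$ is defined on the a.s.\ finite interval $[0, b-a]$, the space $\mathfrak{T}^h$ is a.s.\ a compact real tree by the same quotient construction as the CRT, and $\lambda_{\mathfrak{T}^h}$ charges every non-empty open set because, by continuity of ${\bf e}^h$, the preimage of any open ball under the quotient map has positive Lebesgue measure. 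Proposition~\ref{prop_def_process} then supplies both the Dirichlet form and the associated Markov process on $\mathfrak{T}^h$.

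The heart of the proof is the a.s.\ injectivity of $\phi_{\mathfrak{T}^h}$ when $d \geq 8$. I would establish this by Brownian scaling. Writing $\ell := b - a$ for the (a.s.\ finite and positive) length of ${\bf e}^h$, one can realize ${\bf e}^h$ conditionally on $\ell$ as $\sqrt{\ell}\,{\bf e}(\cdot/\ell)$, where ${\bf e}$ is a normalized Brownian excursion conditioned on the event $\{\sqrt{\ell}\,\max_{s\in[0,1]}{\bf e}(s) \geq h\}$, which has positive probability for each $\ell > 0$. This scaling induces a homeomorphism between $\mathfrak{T}$ and $\mathfrak{T}^h$ that multiplies the tree metric by $\sqrt{\ell}$ and, since the Gaussian label has covariance proportional to $d_{\mathfrak{T}}$, replaces $\phi_{\mathfrak{T}}$ by $\ell^{1/4}\phi_{\mathfrak{T}}$ in distribution. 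Both operations (conditioning on a positive-probability event and rescaling the label by a positive constant) preserve a.s.\ injectivity, and Proposition~\ref{propdef_BISE} gives a.s.\ injectivity of $\phi_{\mathfrak{T}}$ for $d \geq 8$. Integrating over the law of $\ell$ therefore yields a.s.\ injectivity of $\phi_{\mathfrak{T}^h}$.

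Once injectivity is established, the distance $d_{\phi_{\mathfrak{T}^h}(\mathfrak{T}^h)}$ is by construction the pushforward of $d_{\mathfrak{T}^h}$ under $\phi_{\mathfrak{T}^h}$, so $\phi_{\mathfrak{T}^h}$ is an isometry between $(\mathfrak{T}^h, d_{\mathfrak{T}^h})$ and its image; likewise, $\lambda_{\phi_{\mathfrak{T}^h}(\mathfrak{T}^h)}$ is by definition the pushforward of $\lambda_{\mathfrak{T}^h}$. Since the Brownian motion attached to a compact resistance-metric measure space is intrinsic to the Dirichlet form of Proposition~\ref{prop_def_process}, and this form is covariant under measure-preserving isometries, the image process $\phi_{\mathfrak{T}^h}(B^{h\text{-CRT}})$ is the Brownian motion on the target space, which we identify with $B^{h\text{-ISE}}$.

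The main obstacle will be the injectivity step, which is the only place the dimensional threshold $d \geq 8$ enters. A direct capacity/Hausdorff-dimension argument, in the spirit of~\cite{Croydon_arc} for the unconditioned CRT, would require reworking the self-intersection estimates for the Gaussian label on $\mathfrak{T}^h$; the absolute-continuity reduction above avoids this, making the paper's informal claim that the generalization from $\phi_{\mathfrak{T}}$ to $\phi_{\mathfrak{T}^h}$ is ``straightforward'' rigorous.
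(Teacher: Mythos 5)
Your proposal is correct. It is worth noting that the paper offers no proof of Proposition~\ref{propdef_BISE2} at all: it simply asserts that the generalization of Proposition~\ref{propdef_BISE} (i.e.\ of Croydon's injectivity and isometry results for the unconditioned CRT) to $\mathfrak{T}^h$ is ``straightforward''. Your argument supplies the missing detail, and the route you choose --- disintegrating ${\bf e}^h$ over its duration $\ell$, identifying the conditional law as a Brownian-scaled normalized excursion conditioned on the positive-probability event $\{\sqrt{\ell}\max{\bf e}\geq h\}$, and transporting a.s.\ injectivity of $\phi_{\mathfrak{T}}$ through the scaling $d\mapsto\sqrt{\ell}\,d$, $\phi\mapsto\ell^{1/4}\phi$ and the absolute continuity --- is a clean alternative to what the paper's remark implicitly suggests, namely rerunning Croydon's capacity/self-intersection estimates directly on $\mathfrak{T}^h$. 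The only point deserving a word of justification in a written version is the excursion-theoretic identification of the excursion of $W$ straddling $\tau_h$ with an It\^o excursion conditioned on $\{\max\geq h\}$, which is what makes the conditional scaling representation legitimate; this is standard. The remaining steps (existence of $B^{h\text{-CRT}}$ via Proposition~\ref{prop_def_process}, and transport of the Dirichlet form through the measure-preserving isometry $\phi_{\mathfrak{T}^h}$ once injectivity is known) are exactly as in the unconditioned case.
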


\subsection{Graph spatial trees and approximations of the ISE and the $B^{\text{ISE}}$}\label{sect_KISE}

We start the section by introducing the concept of graph spatial trees and we will then introduce approximations of the ISE ($h$-ISE) and the $B^{\text{ISE}}$ ($B^{h\text{-ISE}}$).

\subsubsection{Graph spatial trees}\label{sect_graph_tree}

Let us now present a notion introduced by Croydon in~\cite{Croydon_arc}.
\begin{definition}\label{def_gst}
A graph spatial tree $(T,d_T,\phi_T)$ is a triple, where $T$ is a (rooted, ordered)  tree-graph, $d_T$ is a metric on $T$ determined by edge lengths $(l(e))_{e\in E(T)}$ and $\phi_T:\overline{T}\to\mathbb{R}^d$ is a continuous function, where $\overline{T}$ is the real-tree naturally associated with $(T,d_T)$, i.e., $\overline{T}$ is composed of a finite number of line segments with finite edge length $d_T$. 

We will use the term shape to designate the first component $T$ of a graph spatial tree. We say that a graph spatial tree has a non-degenerate shape if all the vertices of $T$ have degree 1 or 3.
\end{definition}

 Given a graph spatial tree $(T,d_T,\phi_T)$, we can assign a probability measure $\lambda_{T}$  defined as the renormalized Lebesgue measure (so that the $\lambda_{T}$-measure of a line segment in $T$ is proportional to its length).

\vspace{0.5cm}

{\it A simple way to construct graph spatial  trees }

\vspace{0.5cm}

Let $(T,d_{T},\phi_T)$ be a triple where the pair $(T,d_T)$ is a rooted real tree and $\phi_T:T\to\R^d$ be a continuous embedding and we consider a sequence $(\sigma_i)_{i\in \N}$ of points of the real tree $T$. There is a simple way to construct a rooted graph spatial tree from $(T,d_T,\phi_T)$ with an order on the edges.

 Firstly, we introduce some necessary notation. For $x,y\in T$, the unique path between $x$ and $y$ is denoted $[x,y]$. We define the branching point of $\sigma,\sigma',\sigma''\in T$ as the unique point $b^T(\sigma,\sigma',\sigma'')\in T$ which is in the triple intersection of the paths $[\sigma,\sigma']$, $[\sigma',\sigma'']$ and $[\sigma'',\sigma]$.

Fix $K\in \N$. We define the reduced subtree $T(\sigma_1,\ldots,\sigma_K)$ to be the graph tree with vertex set
\[
V(T(\sigma_1,\ldots, \sigma_K)):=\{b^{T}(\sigma,\sigma',\sigma''):\sigma,\sigma',\sigma'' \in \{\text{root},\sigma_1,\ldots,\sigma_K\}\},
\]
and graph tree structure induced by the arcs of $T$, so that two elements $\sigma$ and $\sigma'$ of $V(T(\sigma_1,\ldots,\sigma_K))$ are connected by an edge if and only if $\sigma\neq \sigma'$ and also $[\sigma,\sigma']\cap V(T(\sigma_1,\ldots,\sigma_K))=\{\sigma,\sigma'\}$. We set the length of an edge $\{\sigma,\sigma'\}$ to be equal to $d_{T}(\sigma,\sigma')$ and we extend the distance linearly on that edge. Also, we consider the embedding of $T(\sigma_1,\ldots,\sigma_K)$ given by the restriction of $\phi_T$ to $T(\sigma_1,\ldots,\sigma_K)$. This allows us to view $T(\sigma_1,\ldots, \sigma_K)$ as a graph spatial tree.

Next, we will show how to endow $T(\sigma_1,\ldots,\sigma_K)$ with an order on the edges. The leaves $\sigma_1,\ldots, \sigma_K$ are naturally ordered from 1 to $K$ and, when $T(\sigma_1,\ldots,\sigma_K)$ is non-degenerate, then we can order the remaining branching points in the order  you encounter them when moving from the root to $\sigma_1$, then continue the labeling as you move from the root to $\sigma_2$ and so on up to $\sigma_K$. This creates an ordering on the vertices. Then the label/order of an edge is that of its endpoint furthest from the root.

This graph spatial tree will be denoted $(T^{K,(\sigma_i)},d_{T^{K,(\sigma_i)}}, \phi_{T^{K,(\sigma_i)}})$. The associated normalized probability measure is denoted $\lambda_{\phi_{T^{K,(\sigma_i)}}(T^{K,(\sigma_i)})}$. The dependence on $\sigma$ will often be dropped in the notation when the context is clear.

\subsubsection{The $K$-CRT, the $K$-ISE and the $B^{K\text{-ISE}}$ and their counterparts conditioned on height}\label{sect:kskele}

Consider $\mathfrak{T}$ a realization of the CRT and $(V_i)_{i\in \N}$ chosen according to $(\lambda_{\mathfrak{T}})^{\otimes \N}$.
Fix $K\in \N$ . We can use the construction described in Section~\ref{sect_graph_tree} to define a graph spatial tree, which we call $K$-ISE and denote it  $\mathfrak{B}^{(K)}=(\mathfrak{T}^{(K)},d_{\mathfrak{T}^{(K)}},\phi_{\mathfrak{T}^{(K)}})$, where $(\mathfrak{T}^{(K)},d_{\mathfrak{T}^{(K)}})$ is called  $K$-CRT.  Note that its shape $\mathfrak{T}^{(K)}$ comes with an order on the edges.

We recall that this object comes with a probability measure $\lambda_{\phi_{\mathfrak{T}^{(K)}}(\mathfrak{T}^{(K)})}$. For the sake of simplicity we will denote $\lambda_{\phi_{\mathfrak{T}^{(K)}}(\mathfrak{T}^{(K)})}$ as $\lambda^{(\frak{T},K)}$.

 It is also interesting to note that $\mathfrak{T}^{(K)}$ has no point of degree more than 3, indeed, by Theorem 4.6 in~\cite{DuLG}, it is known that $\Xi$-a.s.~for any $x\in \mathfrak{T}$ the set $\mathfrak{T}\setminus \{x\}$ has at most three connected components. This means that the shape $\mathfrak{T}^{(K)}$ is non-degenerate.

Let $B^{K\text{-ISE}}$ be the Brownian motion on the $K$-ISE $\frak{B}^{(K)}$ (endowed with the measure $\lambda^{(\frak{T},K)}\circ \phi_{\mathfrak{T}^{(K)}}^{-1}$) according to the definition of Brownian motion on a real tree, given in Proposition \ref{prop_def_process}. It can be shown (in essence equation (8.3) of~\cite{Croydon_arc}) that

\begin{proposition} \label{prop_approx_bisek}
We have that $B^{K\text{-ISE}}$ converges to $B^{\text{ISE}}$ as $K\to \infty$, in distribution  in the topology of uniform convergence (in compacts subsets of time) in $C(\R_+,\R^d)$ for $M \otimes (\lambda_{\mathfrak{T}})^{\otimes \N}$-a.e.~realization of $(\mathfrak{T},d_{\mathfrak{T}},\phi_{\mathfrak{T}},(V_i)_{i\in \N})$.
\end{proposition}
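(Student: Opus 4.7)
The plan is to deduce Proposition~\ref{prop_approx_bisek} from the analogous convergence $B^{K\text{-CRT}}\to B^{\text{CRT}}$ at the level of real trees, and then transfer it through the spatial embedding $\phi_{\mathfrak{T}}$. The engine is Croydon's general stability theorem for Brownian motion on compact measured resistance real trees (Section~8 of \cite{Croydon_arc}): if a sequence of rooted measured real trees converges in a Gromov--Hausdorff--Prokhorov sense, with the limit measure non-degenerate on balls, then the associated Brownian motions started at the corresponding roots converge in law, in the topology of uniform convergence on compact sets of time.

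First, I would establish that, for $M\otimes(\lambda_{\mathfrak{T}})^{\otimes \N}$-a.e.\ realization, the measured metric spaces $(\mathfrak{T}^{(K)}, d_{\mathfrak{T}^{(K)}}, \lambda^{(\mathfrak{T},K)})$ converge to $(\mathfrak{T}, d_{\mathfrak{T}}, \lambda_{\mathfrak{T}})$ in this Gromov--Hausdorff--Prokhorov sense as $K\to\infty$. Since the $V_i$'s are independent with distribution $\lambda_{\mathfrak{T}}$, the strong law of large numbers gives that $\{V_i\}_{i\leq K}$ is $\lambda_{\mathfrak{T}}$-dense, and hence Hausdorff-dense, in the compact real tree $\mathfrak{T}$; this handles convergence of the metric spaces. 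For the measures, a projection/averaging argument along each arc of the skeleton transfers the empirical weak convergence $\frac{1}{K}\sum_{i=1}^K \delta_{V_i}\Rightarrow \lambda_{\mathfrak{T}}$ into weak convergence of the renormalized length measures $\lambda^{(\mathfrak{T},K)}$ to $\lambda_{\mathfrak{T}}$. Applying Croydon's stability theorem then yields $B^{K\text{-CRT}}\to B^{\text{CRT}}$ at the level of real trees.

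Finally, I would push the convergence forward via $\phi_{\mathfrak{T}}$. For $d\geq 8$, the map $\phi_{\mathfrak{T}}$ is continuous and injective on the compact tree $\mathfrak{T}$ (as used in Proposition~\ref{propdef_BISE}), hence a homeomorphism onto its image, and in particular uniformly continuous. By construction $\phi_{\mathfrak{T}^{(K)}}$ is the restriction of $\phi_{\mathfrak{T}}$ to $\mathfrak{T}^{(K)}$, so $B^{K\text{-ISE}}=\phi_{\mathfrak{T}}(B^{K\text{-CRT}})$ and $B^{\text{ISE}}=\phi_{\mathfrak{T}}(B^{\text{CRT}})$; the continuous mapping theorem, combined with the uniform continuity of $\phi_{\mathfrak{T}}$, transports the uniform-on-compacts convergence from the CRT to the ISE, giving the desired statement in $C(\R_+,\R^d)$. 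The main obstacle lies in the measure-theoretic half of Step~1: $\lambda^{(\mathfrak{T},K)}$ is supported on the one-dimensional skeleton $\mathfrak{T}^{(K)}$ while $\lambda_{\mathfrak{T}}$ charges all of $\mathfrak{T}$, so the convergence is far from a mere density statement and requires exploiting the ``branches growing out of the spine'' structure of the CRT to average correctly---this is the delicate technical input already carried out in \cite{Croydon_arc}, which is precisely why the proposition reduces, in essence, to equation~(8.3) there.
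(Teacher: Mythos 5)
Your proposal is correct and follows essentially the same route as the paper: the paper offers no proof of this proposition beyond the citation ``in essence equation (8.3) of \cite{Croydon_arc}'', and your outline (convergence of the measured skeletons $(\mathfrak{T}^{(K)},d_{\mathfrak{T}^{(K)}},\lambda^{(\mathfrak{T},K)})$ to $(\mathfrak{T},d_{\mathfrak{T}},\lambda_{\mathfrak{T}})$, Croydon's stability theorem for Brownian motions on measured resistance trees, then pushforward through the a.s.\ injective embedding $\phi_{\mathfrak{T}}$ for $d\geq 8$) is precisely the argument carried out there. The one genuinely delicate input you flag---weak convergence of the normalized length measures $\lambda^{(\mathfrak{T},K)}$ to the mass measure $\lambda_{\mathfrak{T}}$---is item (ii) of Theorem 3 in \cite{Al1}, which the paper itself invokes in the proof of Proposition \ref{prop_cond_V}, so deferring it is consistent with the paper's treatment.
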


A similar construction  can be made by choosing, on a realization $\mathfrak{T}^h$ of the $h$-CRT, points  $(V_i)_{i\in \N}$ distributed  according to $(\frac{\lambda_{\mathfrak{T}^h}}{\lambda_{\mathfrak{T}^h}(\mathfrak{T}^h)})^{\otimes \N}$ (meaning they are uniformly distributed on $\mathfrak{T}^h$). The reduced sub-tree spanned by the root and $(V_i)_{i=1,\dots,K}$ will be denoted as $\mathfrak{T}^{h,K}$). The tree $\mathfrak{T}^{h,K}$ comes endowed with a probability measure as in Section \ref{sect_KISE} which will be denoted $\lambda^{(\frak T^h,K)}$. We define $B^{h,K\text{-ISE}}$ the Brownian motion in $(\frak{T}^{h,K},d_{\frak{T}^{h,K}},\lambda^{(\frak{T}^h,K)})$ according to Proposition \ref{prop_def_process}.

\begin{remark}\label{rk:equivalence}
In the related article \cite{CFHP}, a random graph spatial tree $\mathcal{B}_K(W)$ is constructed by uniformly sampling paths from the measure of the historical super Brownian motion. It can be shown that $\mathcal{B}_K(W)$ has the same law as $\mathfrak{B}^{(K)}$. The proof is a straightforward generalization of the argument in the proof of Theorem 2.1 in \cite{le1993class}
\end{remark}

\section{The skeleton of a graph} \label{sect_abstract_theorem}

Let $G=(V(G),E(G))$ be a graph, where $V(G)\subset\mathbb{Z}^d$ containing the origin, $o \in V(G)$. We will have in mind the nearest neighbor case, where $E(G)\subset E(\mathbb{Z}^d)$, or the spread out case, where there exists $L>0$ such that $\|x-y\|>L$ implies $(x,y)\notin E(G)$.
We are now going to introduce the skeleton of $G$ and related notations which are needed to state the four conditions $(G)$, $(R)$, $(V)$ and $(S)$ for the convergence to the $B^{\text{ISE}}$. A more detailed description can be found in~\cite{BCFa}.

One of the key notions we will need in this paper is the notion of cut-point.
\begin{definition}\label{def_cut_point}
We call cut-bond any edge $e\in E(G)$ whose removal disconnects G. The endpoints of a cut-bond are called a cut-points. \end{definition}
We denote $V_{\text{cut}}(G)$ the set of cut-points of $G$, which we assume  to be non-empty.

 Let us now consider a sequence  $(x_i)_{i\in \N}$ of points $V_{\text{cut}}(G)$. Fix $K\in \N$, we construct the graph $G(K)$ in the following manner
\begin{enumerate}
\item the vertices of $G(K)$ are the set of all cut-points that lie on a path from the root to an $x_i$ for $i\leq K$,
\item  two vertices of $G(K)$ are adjacent if there exists a path connecting them which does not use any cut-point.
\end{enumerate}

The new graph $G(K)$ will be rooted at $\text{root}^*$  which is the first cut-point on the path from the origin to $x_1$.

The removal of all cut-bonds in $G$ results in a with graph several connected components. Those connected components are called bubbles and all cut-points  in the same bubble are inter-connected in $G(K)$. This means that $G(K)$ is a graph is composed of complete graphs glued together but single edges. This construction can be visualized in Figure~1.

\begin{definition}\label{def_thin}
We will say that a graph $G(K)$ is tree-like if it does not contain any subgraph that is a complete graph apart from segments and triangles.
\end{definition}

  \begin{figure}\label{fig0}
  \includegraphics[width=0.8\linewidth]{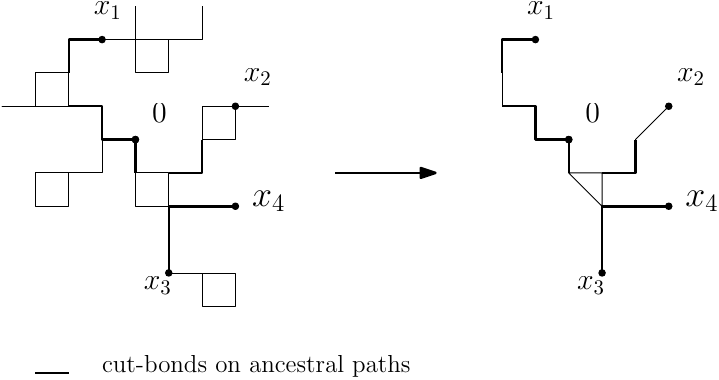}
  \caption{The transformation a graph $G$ is turned into $G(4)$.}
\end{figure}

\begin{remark}\label{skeleton_lat}
In the case of lattice trees, all edges are cut-bonds and all vertices are cut-points. In particular, the graph $G(K)$ is always tree-like.
\end{remark}

\subsection{Approximating a tree-like graph by a graph spatial tree}\label{sec:treelike}

Let us assume that $G(K)$ is tree-like. We are now going to perform a technical operation, that will be helpful to complete our proofs. In essence we are trying to build a graph spatial tree that will approximate $G(K)$ well.

We want to turn the triangles  present in $G(K)$ into stars in order to turn out the tree-like graph into a tree, this procedure will add one point for every triangle present in the graph $G(K)$.

\begin{remark}\label{skeleton_lat2}
In the case of lattice trees, step 1 and step 2 described below are not necessary since $G(K)$ has no actual bubbles. In the end, the graph $\T^{(G,K)}$ constructed will simply be the subgraph of $G$ composed by the union of the ancestral paths of the points $x_1,\ldots, x_K$.
\end{remark}

\vspace{0.5cm}

{\it Step 1: Turning $G(K)$ into a tree $\T^{(G,K)}$}

\vspace{0.5cm}

 For every triangle $(x,y),(y,z),(z,x)\in E(G(K))$, we remove the edges $(x,y),(y,z),(z,x)$ and we introduce a new vertex $v_{x,y,z}$ and new edges $(x,v_{x,y,z})$, $(y,v_{x,y,z})$, $(z,v_{x,y,z})$.   We denote $\T^{(G,K)}$, the tree obtained by this construction.

 We denote $V(\T^{(G,K)})$ the vertices of $\T^{(G,K)}$ and $V^*(\T^{(G,K)})$ the vertices which are not of the form $v_{x,y,z}$ (which are actually the vertices of $G(K)$).

 Similarly, we denote $E(\T^{(G,K)})$ the edges of $\T^{(G,K)}$ and $E^*(\T^{(G,K)})$  the edges which are not of the form $(x,v_{x,y,z}),(y,v_{x,y,z}),(z,v_{x,y,z})$.

Finally, for $x,y\in V(\T^{(G,K)})$, we write $x\sim^*y$ if there exists no $z\in  V^*(\T^{(G,K)})$ which lies on the path from $x$ to $y$. This means that $x$ and $y$ were neighbours before the star-triangle transformation, or equivalently that they are connected by a bubble.

 Since the tree $\T^{(G,K)}$ is rooted (at $\text{root}^*$) it comes with a natural notion of ancestry. For $x\in \T^{(G,K)}$, we denote $\overrightarrow{\T^{(G,K)}_{x}}$, the set of points of $\T^{(G,K)}$ which are descendants of $x$, including $x$.
 
 \begin{remark} In the case where $G$ is a (lattice) tree, we have that  $V^*(\T^{(G,K)})= V(\T^{(G,K)})$, $E^*(\T^{(G,K)})= E(\T^{(G,K)})$ and $\text{root}^*=\text{root}$.\end{remark}

\vspace{0.5cm}

{\it Step 2: Turning $\T^{(G,K)}$ into a real tree by adding a metric}

\vspace{0.5cm}

The tree $\T^{(G,K)}$ comes with a natural metric by setting
\begin{enumerate}
\item for $(x,y)\in E^*(\T^{(G,K)})$, we set $d_{\T^{(G,K)}}(x,y)=d_{G}(x,y)$, where $d_G$ is the graph distance,
\item for any triple of edges $(x,v_{x,y,z}),(y,v_{x,y,z}),(z,v_{x,y,z})$, where $x$ is the ancestor of $y$ and $z$, we set \[d_{\T^{(G,K)}}(x,v_{x,y,z})=\frac{d_G(x,y)+d_G(x,z)-d_G(z,y)}{2},\] \[d_{\T^{(G,K)}}(y,v_{x,y,z})=\frac{d_G(x,y)+d_G(y,z)-d_G(x,z)}{2}\] and \[d_{\T^{(G,K)}}(z,v_{x,y,z})=\frac{d_G(x,z)+d_G(y,z)-d_G(x,y)}{2}.\]
Note that this assignment of distances keeps consistency in the sense that \[d_G(x,y)=d_{\T^{(G,K)}}(x,v_{x,y,z})+d_{\T^{(G,K)}}(y,v_{x,y,z}),\] \[d_G(x,z)=d_{\T^{(G,K)}}(x,v_{x,y,z})+d_{\T^{(G,K)}}(z,v_{x,y,z})\] and \[d_G(y,z)=d_{\T^{(G,K)}}(y,v_{x,y,z})+d_{\T^{(G,K)}}(z,v_{x,y,z}).\]
\item the distance grows linearly along an edge.
\end{enumerate}

Our choice for the distances in the second part is arbitrary but it will not have an significant impact on our proof. It can be noted that this distance conserves the distance from $\text{root}^*$ to any point in $V^*(\T^{(G,K)})$.

\vspace{0.5cm}

{\it Step 3: Assigning a spatial location to the points in $\T^{(G,K)}$}

\vspace{0.5cm}

Finally we want to view our tree as a spatial tree embedded in $\R^d$, i.e.~we want to find an embedding of the edges into $\R^d$.

 Any vertex of $V^*(\T^{(G,K)})$ is assigned its original location in $G$. Moreover the vertices $v_{x,y,z}$ are mapped to the barycenter of  $x$, $y$ and $z$. We write $\phi^{G(K)}$ this map.

  If $(x,y)\in E(\T^{(G,K)})$, then the point $z$ which is at a $d_{\T^{(G,K)}}$-distance $\alpha d_{\T^{(G,K)}}(x,y)$ along the edge $(x,y)$ is mapped to the point which is at distance $\alpha d_{\Z^d}(\phi^{G(K)}(x),\phi^{G(K)}(y))$ along the $\R^d$-geodesic between $\phi^{G(K)}(x)$ and $\phi^{G(K)}(y)$. This extends  $\phi^{G(K)}$ to a map from $\T^{(G,K)}$ to $\R^d$.

  In particular the notation $\phi^{G(K)}(e)$, for $e\in E(\T^{(G,K)})$, corresponds to a segment of $\R^d$.

\subsection{Adding a measure associated to the volume of the graph}\label{sect_mu}

We are going to add a measure to our graph $\T^{(G,K)}$.

For any $x\in G$, let $\pi^{(G,K)}(x)$ be the unique $v\in V^*(\T^{(G,K)})$ separating $x$ from the origin and such that for any $v'\in V^*(\T^{(G,K)})$ with $v'$ separating $x$ from the origin and $v'\neq v$ we have that $v' \prec v$. That is, when going from $\text{root}^*$ to $x$, the point $\pi^{(G,K)}(x)$ is the last cut-point crossed before reaching $x$. In the case where $x$ is not separated from the origin by a cut-bond, i.e.~$x$ is in the bubble of the origin, then we set $\pi^{(G,K)}(x)=\text{root}^*$ by convention.

Now for $x\in V^*(\T^{(G,K)})$, let $v_{\T^{(G,K)}}(x):=\#\{(y,z)\in E(G): \pi^{(G,K)}(y)=x \text{ and } y\neq x\}$ and use this to define a measure on $V^*(\T^{(G,K)})$.
\begin{equation}\label{eq:defofmu}
\mu^{(G,K)}:=\sum_{x\in V^*(\T^{(G,K)})} v_{\T^{(G,K)}}(x)\delta_x.
\end{equation}

\subsection{Another way of viewing $\T^{(G,K)}$ as graph spatial tree}\label{sect_mathfrak}

For our future purpose it will be convenient to be able to introduce a reduced version of $\T^{(G,K)}$. This distinction will be important for the Definition~\ref{def_condG}.

It will be a graph spatial tree  denoted $\mathfrak{B}^{(G,K)}=(T^{(G,K)},d_{T^{(G,K)}},\phi_{T^{(G,K)}})$ which is obtained by the procedure described in Section~\ref{sect_graph_tree}. In the notations of that section this spatial graph is $((\T^{(G,K)})^{K,(x_i)},d_{(\T^{(G,K)})^{K,(x_i)}}, \phi_{(\T^{(G,K)})^{K,(x_i)}})$.

In words, to build $T^{(G,K)}$, one considers the subgraph of $\T^{(G,K)}$ connecting $\text{root}^*$, $x_1,\ldots,x_K$ then every vertex of degree 2 is erased (collapsing the 2 adjacent edges into one).  In particular, the vertices of $T^{(G,K)}$ are $\text{root}^*$, $x_1,\ldots,x_K$ and the corresponding branching points.

The graph spatial tree $\mathfrak{B}^{(G,K)}$ is called the $K$-skeleton of $G$.

\begin{remark}\label{rem_not_depend} It is important to note that the distance and the embedding we assign to $T^{(G,K)}$ coincide with those assigned to $\T^{(G,K)}$. This will allow us to use, e.g., $d_{\T^{(G,K)}}$ to signify $d_{T^{(G,K)}}$.\end{remark}

\begin{remark}\label{skeleton_lat3}
In the case of lattice trees, $T^{(G,K)}$ is composed of the vertices $\text{root}$, $x_1, \ldots, x_k$ (as well as their branching points) and two of those vertices are adjacent in $T^{(G,K)}$ if, and only if, they can be connected in $\T^{(G,K)}$ without using a vertex of $T^{(G,K)}$ .
\end{remark}

T
\section{Conditions $(S)$, $(G)$, $(V)$ and $(R)$} \label{sect_conditions}

Our goal in this section is to describe the four conditions defined in~\cite{BCFa} which imply convergence of the simple random walks on certains critical graphs $G_n$ towards the $B^{h\text{-ISE}}$. For this, we will consider a sequence of random graphs $(G_n)_{n\in \N}$.

Fix $K\in \N$. If the graph $G_n(K)$ constructed from $(G_n,(V_i^n)_{i\in \N})_{n\in \N}$ is tree-like, then the construction of the skeleton of the previous section can be carried out. In order to lighten the notations, we will write $\T^{(n,K)}$, $T^{(n,K)}$, $V^*(\T^{(n,K)})$, $\phi^{(n,K)}$ and $\pi^{(n,K)}$ for  $\T^{(G_n,K)}$, $T^{(G_n,K)}$, $V^*(\T^{(G_n,K)})$, $\phi^{G_n(K)}$,  and $\pi^{(G_n,K)}$ and we also introduce the rescaled quantities $d^{(n,K)}(\cdot,\cdot)$ and $\mu^{(n,K)}$ for $n^{-1}d_{\T^{(G_n,K)}}(\cdot,\cdot)$ and $\abs{E(G_n)}^{-1}\mu^{(G_n,K)}$. All those quantities were defined in the previous section.

We recall that $d^{(n,K)}(\cdot,\cdot)$ (resp.~$\phi^{(n,K)}$) is a distance on (resp.~embedding of) $T^{(n,K)}$ because of Remark~\ref{rem_not_depend}.


\subsection{Condition $(S)$}\label{sect_asympT_thin}

For any $x\in V^*(\T^{(n,K)})$, we call $K$-sausage of $x$ the set $\{y\in G_n,\ \pi^{(n,K)}(y)=x\}$.

 Note that a sausage is typically much larger than the corresponding bubble because it also contains bubbles of $G_n$ which are not in $\T^{(n,K)}$. We introduce
\begin{equation}\label{eq:defofdelta}	
\Delta^{(n,K)}_{\Z^d}:=\max_{x\in V^*(\T^{(n,K)})} \text{Diam}_{\Z^d}(\{y\in V(G_n),\ \pi^{(n,K)}(y)=x\}),
\end{equation}
where $\text{Diam}_{\Z^d}(A):=\max\{d_{\Z^d}(x,y):x,y\in A\}$, for any $A\subset \Z^d$.
We also introduce
\begin{equation}\label{eq:defofdeltaintr}
\Delta^{(n,K)}_{G_n}:=\max_{x\in V^*(\T^{(n,K)})} \text{Diam}_{G_n}(\{y\in V(G_n),\ \pi^{(n,K)}(y)=x\}),
\end{equation}
where $\text{Diam}_{G_n}(A):=\min\{d_{G_n}(x,y):x,y\in A\}$ for any $A\subset \Z^d$, and $d_{G_n}$ is the graph distance in $G_n$.
In our context, we want to extend the definition of tree-like graphs (see Definition~\ref{def_thin}).
\begin{definition}\label{def_athin}
We say that a sequence of random augmented graphs $(G_n,(V_i^n)_{i\in \N})_{n\in \N}$ verifies condition $(S)$ if
\begin{enumerate}
\item for all $K\in \N$, we have
\[
\lim_{n\to \infty} {\mathbb{P}}[G^{(n,K)}\text{ is tree-like}]=1.
\]
\item for all $\epsilon>0$, we have
\begin{enumerate}
\item \[
 \lim_{K\to \infty} \sup_{n\in \N} {\mathbb P}[n^{-1/2}\Delta^{(n,K)}_{\Z^d} >\epsilon]=0\]
and
\item
\[
\lim_{K\to \infty} \sup_{n\in \N} {\mathbb P}[n^{-1}\Delta^{(n,K)}_{G_n} >\epsilon]=0.
\]
\end{enumerate}
\end{enumerate}
\end{definition}

\begin{remark}\label{rem_abuse_tnk} If a sequence $(G_n,(V_i^n)_{i\in \N})_{n\in \N}$ is asymptotically tree-like (meaning that it satisfy the first display of Definition \ref{def_athin}), then the notation $\T^{(n,K)}$ and $T^{(n,K)}$ make sense with probability going to $1$ since these objects can be constructed with the methods of the previous section. The conditions which will involve $\T^{(n,K)}$ and $T^{(n,K)}$ (condition $(G)$ of definition~\ref{def_condG} and condition $(V)$ of definition~\ref{def_condV}) are all asymptotical in $n$. Hence, they are not affected by the fact that $\T^{(n,K)}$ is not defined on an event of small probability. We will thus allow ourselves a slight abuse of notation in the statement of these conditions. \end{remark}

\subsection{Condition $(G)$: asymptotic shape of the graph}\label{sect_condG}

Let us define a distance $D$ on graph spatial trees (defined in Section~\ref{sect_graph_tree}). Here, we follow Section 7 of~\cite{Croydon_arc}.

For $(T,d,\phi)$ a graph spatial tree with an order on the edges, write  $\abs{e_1},\ldots,\abs{e_l}$ for the lengths of the edges.

Take two such graph spatial trees $\mathscr{T}=(T,d,\psi)$ and $\mathscr{T}'=(T',d',\psi ')$. If $T\neq T'$ (i.e., if there is no root preserving, order preserving, graph-isomorphism between $T$ and $T'$) then  we set $d_1(\mathscr{T},\mathscr{T}')=\infty$ and otherwise we set
\begin{equation}\label{eq:defofd1}
d_1(T,T'):=\sup_{i} \abs{\abs{e_i}-\abs{e_i'}}.
\end{equation}

Now if $T=T'$, we have a homeomorphism $\Upsilon_{T,T'}:\overline{T}\to \overline{T'}$ such that  if $x\in \overline{T}$ is at a distance $\alpha \abs{e}$ along the edge $e$, it is mapped to the point $x'\in \overline{T'}$ which is at distance $\alpha \abs{e'}$ along the corresponding edge $e'$. We then set
\[
d_2(T,T'):=\sup_{x\in \overline{T}} d_{\R^d}(\psi(x), \psi'(\Upsilon_{T,T'}(x))).
\]
This yields a metric
\begin{equation}\label{eq:defofD}
D((T,d,\psi),(T',d',\psi ')):=(d_1(T,T')+d_2(T,T'))\wedge 1
\end{equation}
 on graph spatial trees with ordered edges. This distance allows us to define our first condition (relevant definitions can be found at Definition~\ref{def_athin}, Remark~\ref{rem_abuse_tnk} and Section~\ref{sect_KISE}).

\begin{definition}\label{def_condGh} Condition $(G)^h_{\sigma_d,\sigma_{\phi}}$: We say that a sequence of random augmented graphs $(G_n,(V_i^n)_{i\in \N})_{n\in \N}$ satisfies condition $(G)_{\sigma_d,\sigma_{\phi}}^h$ if \begin{enumerate}
\item for all $K\in \N$, we have
\[
\lim_{n\to \infty} {\mathbb P}[G^{(n,K)}\text{ is tree-like}]=1,
\]
\item there exists $\sigma_d, \sigma_{\phi}>0$ such that for all $K\in \N$, the sequence of graph spatial trees $((T^{(n,K)},n^{-1}d_{G_n},n^{-1/2}\phi^{(n,K)}))_{n\in \N}$ converges weakly to $(\mathfrak{T}^{h,K},\sigma_{d} d_{\mathfrak{T}^{h,K}},\sigma_{\phi} \sqrt{\sigma_{d} } \phi_{\mathfrak{T}^{h,K}})$ in the topology induced by $D$.
\end{enumerate}
\end{definition}

\begin{remark} It is very important to stress that in condition $(G)^h$, the topology induced by $D$ imposes a condition on the convergence of the length of only a finite number of edges (the $2K-1$ edges of the asymptotically non-degenerate shape). This is where the distinction between $T^{(n,K)}$ and $\T^{(n,K)}$ makes a big difference. \end{remark}

Let us also introduce the strengthened version of condition $(G)^h$.
\begin{definition}\label{def_condGh2} Condition $(G)^{h,+}$: We say that a sequence of random augmented graphs $(G_n,(V_i^n)_{i\in \N})_{n\in \N}$ satisfies condition $(G)_{\sigma_d,\sigma_{\phi},\nu}^{h,+}$ if
\begin{enumerate}
\item for all $K\in \N$, we have
\[
\lim_{n\to \infty} {\mathbb P}[G^{(n,K)}\text{ is tree-like}]=1,
\]
\item there exists $\sigma_d, \sigma_{\phi},\nu>0$ such that for all $K\in \N$, the sequence of graph spatial trees $((T^{(n,K)},n^{-1}d_{G_n}(\cdot,\cdot),n^{-1/2}\phi^{(n,K)}, \frac{\abs{E(G_n)}}{n^2}))_{n\in \N}$ converges weakly to $(\mathfrak{T}^{h,K},\sigma_{d} d_{\mathfrak{T}^{h,K}},\sigma_{\phi} \sqrt{\sigma_{d} } \phi_{\mathfrak{T}^{h,K}},\nu\lambda_{\frak T^h}(\frak{T}^h))$ with the topology induced by $D$ in the first three coordinates and the usual topology in $\R$ in the last coordinate.
\end{enumerate}
\end{definition}

\subsection{Condition $(V)$: distribution of the volume on the graph}\label{sect_condV}

Define $\lambda^{(n,K)}$ the Lebesgue measure on the graph spatial tree, $(\T^{(n,K)},d^{(n,K)})$, normalized to have total mass $1$. Recall that $\overrightarrow{\T^{(n,K)}_{x}}$ are the descendants of $x$ (including $x$ itself) in $\T^{(n,K)}$ and $\mu^{(n,K)}$ was defined in Section~\ref{sect_mu}. Let us introduce the following definition.

\begin{definition} \label{def_condV}
Condition $(V)$:
For each $\epsilon>0$
\[
\lim_{K\to\infty}\limsup_{n\in\N}{\mathbb P}\left[ \sup_{x\in\T^{(n,K)}} \abs{\lambda^{(n,K)}(\overrightarrow{\T_x^{(n,K)}})-\mu^{(n,K)}(\overrightarrow{\T^{(n,K)}_x})}\geq\epsilon\right]=0.
\]
\end{definition}

\subsubsection{Condition $(R)$: the linearity of the resistance}

Let $\reff^{G_n}$ denote the effective resistance in $G_n$. That is, we let each edge of $G_n$ have unit resistance and, for a pair of vertices $x,y$, $\reff^{G_n}(x,y)$ is the electrical resistance between $x$ and $y$ in the electrical network just described.
\begin{definition} Condition $(R)$: We say that a sequence of random augmented graphs $(G_n,(V_i^n)_{i\in \N})_{n\in N}$  satisfies condition $(R)_{\rho}$ if there exists $\rho>0$ such that for all $\epsilon>0$ and for all $i\in \N$
\[
\lim_{n\to \infty} \mathbb{P}\Bigl[ \abs{\frac{\reff^{G_n}(0,V_i^n)}{d_{G_n}(0,V_i^n)}-\rho}>\epsilon\Bigr]=0.
\]
\end{definition}

\section{Scaling limit results for simple random walks on critical graphs in high dimensions}\label{sect_thms}

In this section, we will state some of the main results of this article, which simplify the conditions under which we can deduce convergence towards the Brownian motion in the ISE.

The article~\cite{BCFa} was dedicated to finding conditions under which simple random walks on critical graphs would converge towards the Brownian motion on the ISE (or minor variants of that process). One of the main results obtained there is the following.
\begin{proposition}\label{thm:height}
Under conditions $(G)^h_{\sigma_d,\sigma_\phi}$, $(S)$, $(V)$ and $(R)_{\rho}$ we have that
\[
(n^{-1/2}X^{G_n}_{n\abs{E(G_n)}t})_{t \geq 0}\stackrel{n\to\infty}{\to} (\sqrt{\sigma_d} \sigma_\phi B^{h\text{-ISE}}_{(\rho\sigma_d)^{-1}\lambda_{\frak{T}^h}(\frak{T}^h)t})_{t\geq 0},
\]
where the convergence is annealed and occurs with the topology of uniform convergence over compact sets of time.
\end{proposition}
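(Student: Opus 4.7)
The plan is to realize the simple random walk on $G_n$ as the Brownian motion associated with the resistance form on $G_n$ (unit resistance per edge) paired with the degree measure, and to invoke a general scaling-limit theorem for Markov processes on measured resistance-metric spaces, in the spirit of Croydon's Section~7 of \cite{Croydon_arc}. That framework asserts that, provided a sequence of measured resistance-metric spaces embedded in $\R^d$ converges in a spatial Gromov--Hausdorff--vague sense, the associated Brownian motions converge in $C(\R_+,\R^d)$. Noting that running the SRW at time $n\abs{E(G_n)}t$ amounts to rescaling the resistance metric by $n^{-1}$ (matching $\reff^{G_n}$ on a tree-like graph) and normalizing the degree measure to a probability, the proof reduces to identifying the annealed distributional limit of
\begin{equation*}
\bigl(G_n,\; n^{-1}\reff^{G_n},\; \nu^{G_n}/(2\abs{E(G_n)}),\; n^{-1/2}\iota_n\bigr),
\end{equation*}
where $\nu^{G_n}=\sum_x \deg_{G_n}(x)\delta_x$ and $\iota_n:G_n\hookrightarrow\R^d$ is the natural embedding, and then reading off the space/time constants from the scalings.

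The identification proceeds through the $K$-skeleton approximation. Fixing $K$ and letting $n\to\infty$: condition $(G)^h_{\sigma_d,\sigma_\phi}$ gives the joint convergence of $(T^{(n,K)}, n^{-1}d_{G_n}, n^{-1/2}\phi^{(n,K)})$ to the $K$-skeleton of the $h$-ISE, with intrinsic distance $\sigma_d d_{\mathfrak{T}^{h,K}}$ and embedding $\sigma_\phi\sqrt{\sigma_d}\,\phi_{\mathfrak{T}^{h,K}}$. Condition $(R)_\rho$, combined with the series law for resistances along cut-bond segments of a tree-like skeleton, upgrades intrinsic distance to resistance distance: $n^{-1}\reff^{G_n}$ between any two skeleton cut-points converges to $\rho$ times the limiting intrinsic distance, so the rescaled resistance-metric skeleton limit is $\rho\sigma_d d_{\mathfrak{T}^{h,K}}$. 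Condition $(S)$ forces each $K$-sausage to have vanishing Euclidean and intrinsic diameter, uniformly in $n$, as $K\to\infty$, so that replacing $G_n$ by $\T^{(n,K)}$ perturbs the rescaled resistance and embedding by $o_K(1)$. Condition $(V)$ says that the skeleton-volume measure $\mu^{(n,K)}$ is close in the strong subtree-uniform sense to the normalized Lebesgue measure $\lambda^{(n,K)}$ on $\T^{(n,K)}$; since $\lambda^{(n,K)}$ converges under $(G)^h$ to the normalized uniform measure on $\mathfrak{T}^{h,K}$, and since the normalized degree measure on $G_n$ pushed forward to $\T^{(n,K)}$ agrees with $\mu^{(n,K)}$ up to the sausage error controlled by $(S)$, the limiting measure is $\lambda_{\mathfrak{T}^h}/\lambda_{\mathfrak{T}^h}(\mathfrak{T}^h)$.

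Letting $K\to\infty$ produces the full $h$-ISE and the Brownian motion $B^{h\text{-ISE}}$ via Proposition~\ref{propdef_BISE2} together with the $h$-conditioned analogue of Proposition~\ref{prop_approx_bisek}. The constants in the limit arise transparently from the rescalings: the $\abs{E(G_n)}$ factor in the walk clock renormalizes the total measure to a probability, leaving the total mass $\lambda_{\mathfrak{T}^h}(\mathfrak{T}^h)$ in the limit time-change; the intrinsic rescaling by $\sigma_d$ combined with the graph-to-resistance conversion by $\rho$ yields the $(\rho\sigma_d)^{-1}$ time-factor; and the embedding supplies the spatial constant $\sqrt{\sigma_d}\sigma_\phi$ in front of $B^{h\text{-ISE}}$. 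The main obstacle is the interchange of the limits $n\to\infty$ and $K\to\infty$: at fixed $K$ only the non-degenerate $(2K-1)$-edge shape has a well-controlled limit in $(G)^h$, so extending the convergence to the full graph requires the uniform-in-$n$ smallness provided by $(S)$ on the geometric side and by $(V)$ on the measure side. A secondary subtlety is that the convergence-of-processes theorem must be applied in its embedded version, which is precisely why $(G)^h$ is phrased in the topology induced by $D$ that jointly controls distances and embeddings.
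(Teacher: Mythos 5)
The paper itself offers no proof of Proposition~\ref{thm:height}: it is imported verbatim from \cite{BCFa} (``One of the main results obtained there is the following''), and the present article's contribution begins only afterwards, with the replacement of condition $(V)$ by the uniform edge-volume hypothesis. Your sketch correctly reconstructs the strategy actually used in \cite{BCFa}: identify the walk as the process of the resistance form with the degree measure, prove spatial Gromov--Hausdorff-type convergence of the rescaled measured resistance spaces through the $K$-skeletons (using $(G)^h$ for shape/embedding, $(R)_\rho$ for the resistance metric, $(V)$ for the measure, and $(S)$ to control the $n$--$K$ limit interchange), and conclude by Croydon-style process convergence; the constants you extract also match. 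The one step your sketch treats too lightly is the upgrade from condition $(R)_\rho$ --- which only controls $\reff^{G_n}(0,V_i^n)/d_{G_n}(0,V_i^n)$ for the marked points --- to convergence of the rescaled resistance between \emph{arbitrary pairs} of skeleton vertices; the series decomposition over cut-bonds that you invoke is the right tool, but making it work for sub-segments of the paths $[0,V_i^n]$ (rather than only for the full paths) is a genuine argument in \cite{BCFa}, not an immediate consequence of the condition as stated.
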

If, in addition, we also have the control on the cardinality of $G_n$, we can get rid of the dependence on $|E(G_n)|$ on the time scaling. The following corollary is a simple consequence of the theorem above.
\begin{corollary}
 Under conditions $(G)^{h,+}_{\sigma_d,\sigma_\phi,\nu}$, $(S)$, $(V)$ and $(R)_{\rho}$ we have that
\[
(n^{-1/2}X^{G_n}_{n^3t})_{t \geq 0}\stackrel{n\to\infty}{\to} (\sqrt{\sigma_d} \sigma_\phi B^{h\text{-ISE}}_{(\rho\sigma_d\nu)^{-1}t})_{t\geq 0},
\]
where the convergence is annealed and occurs with the topology of uniform convergence over compact sets of time.
\end{corollary}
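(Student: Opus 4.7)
The plan is a straightforward time-change of the conclusion of Proposition~\ref{thm:height}. Condition $(G)^{h,+}_{\sigma_d,\sigma_\phi,\nu}$ is precisely $(G)^h_{\sigma_d,\sigma_\phi}$ augmented with the joint convergence of $\abs{E(G_n)}/n^2$ to the random limit $\nu\lambda_{\frak T^h}(\frak T^h)$ in the product weak topology. In particular $(G)^h$ holds, so Proposition~\ref{thm:height} gives the annealed convergence
\[
\bigl(n^{-1/2}X^{G_n}_{n\abs{E(G_n)}s}\bigr)_{s\geq 0}\;\Longrightarrow\;\bigl(\sqrt{\sigma_d}\,\sigma_\phi\, B^{h\text{-ISE}}_{(\rho\sigma_d)^{-1}\lambda_{\frak T^h}(\frak T^h)\,s}\bigr)_{s\geq 0},
\]
and, since the limiting process in Proposition~\ref{thm:height} is constructed as a measurable functional of the same limit $h$-CRT that drives the convergence assumed in $(G)^{h,+}$, this convergence in fact holds jointly with $\abs{E(G_n)}/n^2\to \nu\lambda_{\frak T^h}(\frak T^h)$.

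Next I introduce the random linear time change $s_n(t):=n^2 t/\abs{E(G_n)}$, chosen so that $n\abs{E(G_n)}\,s_n(t)=n^3 t$. By the joint convergence above, $s_n(\cdot)$ converges uniformly on compact subsets of $\R_+$ to the linear function $t\mapsto t/(\nu\lambda_{\frak T^h}(\frak T^h))$, jointly with the process convergence. Composition $(y,\tau)\mapsto y\circ\tau$ is continuous from $C(\R_+,\R^d)\times C(\R_+,\R_+)$ to $C(\R_+,\R^d)$ under uniform convergence on compacts, so the continuous mapping theorem yields
\[
\bigl(n^{-1/2}X^{G_n}_{n^3 t}\bigr)_{t\geq 0}\;\Longrightarrow\;\Bigl(\sqrt{\sigma_d}\,\sigma_\phi\, B^{h\text{-ISE}}_{(\rho\sigma_d)^{-1}\lambda_{\frak T^h}(\frak T^h)\cdot t/(\nu\lambda_{\frak T^h}(\frak T^h))}\Bigr)_{t\geq 0}.
\]
Cancelling the random factor $\lambda_{\frak T^h}(\frak T^h)$ in the subscript reduces the right-hand side to $\bigl(\sqrt{\sigma_d}\,\sigma_\phi\, B^{h\text{-ISE}}_{(\rho\sigma_d\nu)^{-1}t}\bigr)_{t\geq 0}$, which is the claim; the whole point of the stronger condition $(G)^{h,+}$ is exactly to make this cancellation available.

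The only non-routine point is the joint weak convergence of path and time-change underlying the continuous-mapping step. This is handled, as usual, by passing via a Skorokhod coupling to a probability space on which the convergences in $(G)^{h,+}$ and the conclusion of Proposition~\ref{thm:height} hold almost surely, after which time-composition is a.s.\ continuous. There is no substantial obstacle, which is consistent with the statement being a corollary rather than a new theorem.
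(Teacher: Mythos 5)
Your proof is correct and is precisely the routine time-change argument the paper has in mind when it calls this a ``simple consequence'' of Proposition~\ref{thm:height} (the paper gives no written proof). You also correctly identify the one point that actually needs $(G)^{h,+}$ rather than just convergence of $\abs{E(G_n)}/n^2$ in isolation, namely that the limit of $\abs{E(G_n)}/n^2$ must be the factor $\nu\lambda_{\frak T^h}(\frak T^h)$ built from the \emph{same} limiting tree that drives the clock of $B^{h\text{-ISE}}$, so that the random factor cancels.
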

The conditions $(G)^h_{\sigma_d,\sigma_\phi}$, $(S)$, $(V)$ and $(R)_{\rho}$ are linked to a choice of certain points $(V_n^i)_{n\in \N}$ in our graphs $G_n$. One of the main contributions of this article is to show that a specific choice of such points $(V_n^i)_{n\in \N}$  leads to significant simplifications of those conditions.


\subsection{Simplified scaling limit theorem}\label{sec:simscalimthm}

For a set  $B\subset V(G_n)$,  we denote by $B^*$ the graph with vertices $B$ and with edges $\{[x,y] \in E(G_n),\ x,y \in B\}$. Let us start by defining a specific class of points.
\begin{definition} \label{def_edge_unif}
We say that the points $(V_i^n)_{i\in\N},n\in\N$ are chosen according to uniform edge-volume if, for each $n\in\N$ the sequence $(V_i^n)_{i\in\N}$ is i.i.d.~and
\[
\lim_{n\to \infty}  {\mathbb P}\left[\max_{\substack{B\subset V(G_n) \\ B^* \text{connected}}} \abs{{\mathbb P}[V_1^n \in B\mid G_n] -\frac{\abs{E(B^*)}}{\abs{E(G_n)}} }>\epsilon\right]=0
\]
for all $\epsilon >0$.
\end{definition}

For points chosen in such a way, we do not need to verify condition $(V)$ but we only require the much simpler condition that $\frac{|E(G_n)|}{n^2}$ converges to $\nu$ (this hypothesis is the difference between $(G)^h_{\sigma_d,\sigma_\phi}$ and $(G)^{h,+}_{\sigma_d,\sigma_\phi,\nu}$). Next we present one  of the main theorems of this article:
\begin{theorem}\label{thm:height2}
Assume the points $(V_i^n)_{i\in \N}$ are chosen according to uniform edge-volume.
\begin{enumerate}
\item Under conditions $(G)^h_{\sigma_d,\sigma_\phi}$, $(S)$ and $(R)_{\rho}$ we have that
\[
(n^{-1/2}X^{G_n}_{n\abs{E(G_n)}t})_{t \geq 0}\stackrel{n\to\infty}{\to} (\sqrt{\sigma_d} \sigma_\phi B^{h\text{-ISE}}_{(\rho\sigma_d)^{-1}\lambda_{\frak{T}^h}(\frak{T}^h)t})_{t\geq 0}.
\]
\item Under conditions $(G)^{h,+}_{\sigma_d,\sigma_\phi,\nu}$, $(S)$ and $(R)_{\rho}$ we have that
\[
(n^{-1/2}X^{G_n}_{n^3t})_{t \geq 0}\stackrel{n\to\infty}{\to} (\sqrt{\sigma_d} \sigma_\phi B^{h\text{-ISE}}_{(\nu\rho\sigma_d)^{-1}t})_{t\geq 0},
\]
\end{enumerate}
where the convergences are annealed and occur with the topology of uniform convergence over compact sets of time.
\end{theorem}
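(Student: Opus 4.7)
The plan is to reduce Theorem \ref{thm:height2} to the general convergence result Proposition \ref{thm:height} (and its corollary) by showing that, under the uniform edge-volume assumption on $(V_i^n)_{i\in\N}$, Condition $(V)$ is an automatic consequence of Conditions $(G)^h$ and $(S)$. Granting this implication, part (1) is immediate from Proposition \ref{thm:height}. For part (2), Condition $(G)^{h,+}$ additionally provides the convergence $|E(G_n)|/n^2 \to \nu$ in probability, so the time change $n|E(G_n)|$ of part (1) can be replaced by $\nu n^3$ up to a negligible error, and the factor $\nu$ is absorbed into the time parameter of the limit process by Slutsky and continuity of $B^{h\text{-ISE}}$ in its time argument.

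The core task is therefore the implication $(G)^h + (S) + \text{uniform edge-volume}\Longrightarrow (V)$. My strategy is to introduce, for each $K\in\N$ and an auxiliary integer $M\gg K$, an empirical probability measure $\widehat{\mu}^{(n,K,M)} := \frac{1}{M}\sum_{i=1}^{M}\delta_{\pi^{(n,K)}(V_i^n)}$ on $V^*(\T^{(n,K)})$, and then to sandwich $\mu^{(n,K)}$ and $\lambda^{(n,K)}$ around it via two approximations. The uniform edge-volume property says that the quenched law of each $V_i^n$ is close to the uniform distribution on edge-endpoints of $G_n$; a conditional law-of-large-numbers argument then shows that the $\widehat{\mu}^{(n,K,M)}$-mass of any subtree $\overrightarrow{\T_x^{(n,K)}}$ approximates its $\mu^{(n,K)}$-mass with error vanishing as $M\to\infty$, uniformly in $n$. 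On the other hand, the joint law of $(T^{(n,K)},\pi^{(n,K)}(V_1^n),\ldots,\pi^{(n,K)}(V_M^n))$ is a finite-dimensional functional of the rescaled skeleton at level $M$; by Condition $(G)^h$ applied with $M$ in place of $K$, this converges weakly, as $n\to\infty$, to the analogous continuum empirical measure on $\mathfrak{T}^{h,K}$ built from $M$ uniform samples drawn from $\lambda_{\mathfrak{T}^h}$. Standard CRT estimates then give that this continuum empirical measure converges, as $M\to\infty$, to $\lambda^{(\mathfrak{T}^h,K)}$, which is also the scaling limit of $\lambda^{(n,K)}$ under $(G)^h$. Chaining the limits in the order $n\to\infty$, then $M\to\infty$, then $K\to\infty$ yields Condition $(V)$.

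The principal obstacle is the uniformity over $x\in V^*(\T^{(n,K)})$ in the law-of-large-numbers step. The number of macroscopic branch points of the reduced skeleton $T^{(n,K)}$ is bounded by $2K-1$, so these finitely many ``shape-level'' subtrees can be handled by a union bound; the difficulty is to control the fluctuation of $\mu^{(n,K)}(\overrightarrow{\T_x^{(n,K)}})$ as $x$ ranges through the (possibly large) set of vertices of $V^*(\T^{(n,K)})$ lying on the edges of $T^{(n,K)}$. This is exactly where Condition $(S)$ enters: the control on the intrinsic sausage diameter $\Delta^{(n,K)}_{G_n}$ forces $\mu^{(n,K)}$ to be asymptotically absolutely continuous along each macroscopic edge of $T^{(n,K)}$, so the supremum over $x$ reduces, up to arbitrarily small error, to a supremum over the $O(K)$ branch points, which the finite union bound can handle. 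The combination of $(G)^h$ (coarse geometry of the skeleton), $(S)$ (smallness of bubbles), and the edge-volume sampling (the LLN estimate) thus closes the argument and, together with Proposition \ref{thm:height}, finishes the proof of Theorem \ref{thm:height2}.
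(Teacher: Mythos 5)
Your overall architecture is the paper's: reduce to Proposition~\ref{thm:height} by showing that, with points chosen according to uniform edge-volume, Condition $(G)^h$ implies Condition $(V)$, via an empirical measure built from additional sampled points, a conditional law of large numbers, and a transfer to the continuum through $(G)^h$. The reduction of part (2) to part (1) via $|E(G_n)|/n^2\to\nu$ is also as in the paper. However, there is a genuine gap in the step you identify as the principal obstacle. You invoke Condition $(S)$, specifically the bound on the intrinsic sausage diameter $\Delta^{(n,K)}_{G_n}$, to conclude that $\mu^{(n,K)}$ is ``asymptotically absolutely continuous'' along each macroscopic edge of $T^{(n,K)}$. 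Condition $(S)$ controls only the \emph{diameters} of the sausages, not their edge-volumes; a sausage of intrinsic diameter $o(n)$ may still contain a positive fraction of $|E(G_n)|$, and nothing in $(G)^h$ or $(S)$ rules this out (the skeleton convergence sees only finitely many edge lengths and the embedding, never volumes). So the supremum over the many vertices of $V^*(\T^{(n,K)})$ interior to a skeleton edge cannot be reduced to the branch points this way. The paper closes exactly this gap by a second use of the sampling mechanism (Lemmas~\ref{lem_step0} and \ref{lem_all_mass}): if a portion of $\T^{(n,K)}$ off the newly attached branch points carried edge-volume $\geq\epsilon|E(G_n)|$, then of order $M^{1/2}$ further samples would land there with non-vanishing probability, contradicting a continuum computation (transferred via $(G)^h$) showing that the sampled attachment points asymptotically exhaust the mass; this yields a total-variation bound between $\mu^{(n,K)}$ and a purely atomic, shape-measurable approximation, which gives the uniformity over $x$ for free. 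Accordingly, the paper's implication needs only $(G)^h$ plus uniform edge-volume, not $(S)$.

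A secondary imprecision: the $M\to\infty$ limit of your continuum empirical measure is $\mu^{(\mathfrak{T}^h,K)}$ (the projection of $\lambda_{\mathfrak{T}^h}$ onto the skeleton), not $\lambda^{(\mathfrak{T}^h,K)}$ (the normalized length measure). Identifying the two, uniformly over subtrees and up to an error vanishing as $K\to\infty$, is precisely the continuum version of Condition $(V)$ (display \eqref{eq:exact} in the paper), which requires the no-atoms property of $\lambda_{\mathfrak{T}}$ and Aldous's leaf-density theorem; it should not be folded into ``standard CRT estimates'' without statement, since it is the last nontrivial step of the argument.
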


The theorem stated just above will be used in Section \ref{sect_lattice_trees} to show that the random walk on critical lattice trees, in high dimensions, converge to the Brownian motion on the ISE.



\subsection{Extension to other conditionings}

The initial article~\cite{BCFa} was applied in~\cite{BCFb} for simple random walks on critical branching random walks conditioned on the volume being large. In such a setting the limiting process is the Brownian motion on the ISE (which is the super Brownian motion of total measure $1$). 
The methods of proof used in the current article are also applicable in that setting.

Let us introduce the condition $(G)$ for graphs $G_n$ conditioned on having volume asymptotically equal to $n^2$ (instead of height larger than $n$).

 \begin{definition}\label{def_condG} Condition $(G)_{\sigma_d,\sigma_{\phi}}$: We say that a sequence of random augmented graphs $(G_n,(V_i^n)_{i\in \N})_{n\in \N}$ satisfies condition $(G)_{\sigma_d,\sigma_{\phi}}$ if \begin{enumerate}
\item for all $K\in \N$, we have
\[
\lim_{n\to \infty} {\mathbb P}[G^{(n,K)}\text{ is tree-like}]=1,
\]
\item there exists $\sigma_d, \sigma_{\phi}>0$ such that for all $K\in \N$, the sequence of graph spatial trees $((T^{(n,K)},n^{-1}d_{G_n}(\cdot,\cdot),n^{-1/2}\phi^{(n,K)}))_{n\in \N}$ converges weakly to $(T^{(K)},\sigma_{d} d_{T^{(K)}},\sigma_{\phi} \sqrt{\sigma_{d} } \phi_{T^{(K)}})$ in the topology induced by $D$.
\end{enumerate}
\end{definition}
Proving condition $(G)$ for graphs conditioned on having a volume exactly equal to $n$ is in general more complicated, and as far as we know, not proved in any models belonging to the lace expansion class (apart from branching random walks). 

In \cite{BCFa} it is proven that:
\begin{proposition}\label{prop:height2}
 Under conditions $(G)_{\sigma_d,\sigma_\phi}$, $(S)$, $(V)$ and $(R)_{\rho}$ we have that
\[
(n^{-1/2}X^{G_n}_{n\abs{E(G_n)}t})_{t \geq 0}\stackrel{n\to\infty}{\to} (\sqrt{\sigma_d} \sigma_\phi B^{\text{ISE}}_{(\rho\sigma_d)^{-1}})_{t\geq 0},
\]
where the convergence is annealed and occurs with the topology of uniform convergence over compact sets of time.
\end{proposition}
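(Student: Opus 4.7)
My plan is to follow the Croydon--Kigami framework for convergence of random walks on resistance metric measure spaces. Concretely, I would show that the object $(G_n, n^{-1} d_{G_n}, n^{-1} \reff^{G_n}, |E(G_n)|^{-1}\mu_{G_n}, n^{-1/2}\iota)$, where $\mu_{G_n}$ is the natural SRW-reversible measure on $V(G_n)$ and $\iota: V(G_n) \hookrightarrow \Z^d$ is the canonical inclusion, converges in an appropriate Gromov--Hausdorff-vague topology (jointly in intrinsic and spatial data) to the ISE endowed with intrinsic metric $\sigma_d d_{\mathfrak{T}}$, resistance metric $\rho \sigma_d d_{\mathfrak{T}}$, volume measure $\lambda_{\mathfrak{T}}$, and embedding $\sigma_\phi\sqrt{\sigma_d}\,\phi_{\mathfrak{T}}$. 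Once such a convergence is in hand, a general stability result for Brownian motions on resistance forms (in the spirit of~\cite{Croydon_arc,Kigami_Harm}) delivers the functional convergence of the SRW to $B^{\text{ISE}}$ with the correct constants.

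The convergence would be set up through the finite-$K$ skeleton approximation. For each fixed $K$, condition $(G)_{\sigma_d,\sigma_\phi}$ yields weak convergence of the graph spatial tree $(T^{(n,K)}, n^{-1} d_{G_n}, n^{-1/2}\phi^{(n,K)})$ to $(\mathfrak{T}^{(K)}, \sigma_d d_{\mathfrak{T}^{(K)}}, \sigma_\phi\sqrt{\sigma_d}\,\phi_{\mathfrak{T}^{(K)}})$. Condition $(V)$ upgrades this to convergence of the rescaled measure $\mu^{(n,K)}$, transported to the skeleton via $\pi^{(n,K)}$, towards $\lambda^{(\mathfrak{T},K)}$. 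Condition $(R)_\rho$ identifies the limiting resistance metric: on a tree the resistance is additive, so the pointwise statement $\reff^{G_n}(o,V_i^n)/d_{G_n}(o,V_i^n) \to \rho$ forces the limiting resistance metric on $\mathfrak{T}^{(K)}$ to be $\rho \sigma_d d_{\mathfrak{T}^{(K)}}$.

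To pass from the $K$-skeleton to the full graph $G_n$, condition $(S)$ does the work: $\Delta^{(n,K)}_{\Z^d}$ and $\Delta^{(n,K)}_{G_n}$ become negligible as $K\to\infty$ uniformly in $n$, controlling both the Euclidean and intrinsic Hausdorff distance between $G_n$ and its $K$-skeleton. Combined with Proposition~\ref{prop_approx_bisek}, which gives $B^{K\text{-ISE}} \to B^{\text{ISE}}$, a standard two-scale limit (first $n\to\infty$, then $K\to\infty$) transfers the finite-$K$ convergence to the full ISE. The time scaling $n|E(G_n)|$ is pinned down by the commute-time identity: the natural time-scale of the limit is the product of resistance and mass, and here resistance scales like $n$ while $\mu_{G_n}$ has total mass of order $|E(G_n)|$, so the SRW must be sped up by $n|E(G_n)|$ to match the Brownian time.

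The main obstacle, in my view, is engineering the joint Gromov--Hausdorff-vague convergence that mixes the intrinsic metric and the Euclidean embedding into one topology, in such a way that the continuity of the Brownian motion map on resistance spaces can be applied verbatim. A second subtle point is arranging for subsequential couplings compatible with the annealed convergence statement, so that the rescaled graphs and the limiting ISE live on a common probability space. Finally, because the bubble structure of $G_n$ has been replaced by stars in $\T^{(G,K)}$, condition $(V)$ is essential to guarantee that the measure on the skeleton is a faithful proxy for the measure on $G_n$; without it, the SRW could linger disproportionately in bubbles and the effective time scaling would not be $n|E(G_n)|$.
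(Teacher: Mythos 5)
The paper does not prove Proposition~\ref{prop:height2} at all: it is quoted verbatim from \cite{BCFa} (``In \cite{BCFa} it is proven that\dots''), and the present article takes it as an input in order to derive Theorem~\ref{thm:volume}. So the relevant comparison is with the argument in \cite{BCFa}, and at the level of strategy your outline does match it: finite-$K$ skeleton approximation via $(G)_{\sigma_d,\sigma_\phi}$, identification of the limiting measure via $(V)$ and of the resistance via $(R)_\rho$, control of the sausages via $(S)$, and a final $K\to\infty$ limit using Proposition~\ref{prop_approx_bisek}. The commute-time heuristic for the time scale $n\abs{E(G_n)}$ is also the right one.

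As a proof, however, this is a plan rather than an argument, and the two items you defer as ``obstacles'' are precisely where the work lies. There is no off-the-shelf stability theorem that takes joint convergence of (intrinsic metric, resistance metric, measure, embedding) and returns the annealed functional convergence ``verbatim''; the argument in \cite{BCFa} is built by hand, by projecting $X^{G_n}$ onto the skeleton $\T^{(n,K)}$, proving convergence of the projected, time-changed walk through convergence of the finite-dimensional Dirichlet forms, and then using $(V)$ to show that the additive functional governing the time change converges to the Lebesgue measure on $\mathfrak{T}^{(K)}$ --- this is the essential role of $(V)$, not merely ruling out that the walk ``lingers in bubbles''. One concrete misstep: you justify additivity of the resistance along the skeleton by ``on a tree the resistance is additive'', but in this general setting $G_n$ need not be a tree; additivity along the skeleton holds because consecutive skeleton vertices are separated by cut-bonds. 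Even granting that, condition $(R)_\rho$ only controls $\reff^{G_n}(0,V_i^n)/d_{G_n}(0,V_i^n)$ for the spanning points themselves, and one must still argue (combining the additive decomposition with condition $(G)_{\sigma_d,\sigma_\phi}$) that the resistance of each individual skeleton edge is asymptotically $\rho$ times its length; this localization step is absent from your sketch.
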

As a simple corollary of the proposition above, we get that:
\begin{corollary}
 If, in addition to conditions $(G)_{\sigma_d,\sigma_\phi}$, $(S)$, $(V)$ and $(R)_{\rho}$ we have that there exists $\nu>0$ such that $\frac{|E(G_n)|}{n^2}$ converges to $\nu$ in probability,
\[
(n^{-1/2}X^{G_n}_{n^3t})_{t \geq 0}\stackrel{n\to\infty}{\to} (\sqrt{\sigma_d} \sigma_\phi B^{\text{ISE}}_{(\rho\sigma_d\nu)^{-1}t})_{t\geq 0},
\]
where the convergence is annealed and occurs with the topology of uniform convergence over compact sets of time.
\end{corollary}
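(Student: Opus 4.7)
The plan is to deduce this corollary directly from Proposition~\ref{prop:height2} via a random time change. Set $\alpha_n := \abs{E(G_n)}/n^2$, so that the extra hypothesis reads $\alpha_n \to \nu$ in probability. First I would observe the exact identity $n^3 t = n\abs{E(G_n)}\cdot(t/\alpha_n)$. Writing $Y_n(s) := n^{-1/2}X^{G_n}_{n\abs{E(G_n)} s}$ and $Z_n(t) := n^{-1/2}X^{G_n}_{n^3 t}$, this yields the path-level identity $Z_n(t) = Y_n(t/\alpha_n)$, which expresses the target process as a random reparametrization of the one already handled by Proposition~\ref{prop:height2}.

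The rest is then a Slutsky and continuous-mapping argument in $C(\R_+, \R^d)$ with the topology of uniform convergence on compact sets. Proposition~\ref{prop:height2} provides the annealed convergence $Y_n \Rightarrow Y$, where $Y(s) = \sqrt{\sigma_d}\sigma_\phi B^{\text{ISE}}_{(\rho\sigma_d)^{-1}s}$. Since $\nu$ is a deterministic strictly positive constant and $\alpha_n \to \nu$ in probability, coupling this convergence with the weak convergence of $Y_n$ gives joint weak convergence $(Y_n, \alpha_n) \Rightarrow (Y, \nu)$. Because $\nu>0$ and the sample paths of $Y$ are almost surely continuous, the time-rescaling map $(f, c) \mapsto f(\cdot/c)$ from $C(\R_+,\R^d) \times (0,\infty)$ to $C(\R_+,\R^d)$ is continuous at every point $(f,c)$ with $f$ continuous and $c>0$. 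The continuous mapping theorem then yields $Z_n \Rightarrow Y(\cdot/\nu)$.

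Finally, identifying the limit is a one-line computation:
\[
Y(t/\nu) = \sqrt{\sigma_d}\sigma_\phi B^{\text{ISE}}_{(\rho\sigma_d)^{-1}(t/\nu)} = \sqrt{\sigma_d}\sigma_\phi B^{\text{ISE}}_{(\rho\sigma_d\nu)^{-1}t},
\]
which matches the claimed limit. There is no genuine obstacle here; the only mildly technical point is the continuity of the time-rescaling map, which is routine because $\nu$ is deterministic and strictly positive, so that $t/\alpha_n \to t/\nu$ uniformly on compact intervals, and uniform convergence on compact time intervals is preserved under composition with such a converging dilation. This confirms the description of the statement as a simple corollary of Proposition~\ref{prop:height2}.
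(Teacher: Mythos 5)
Your argument is correct and is precisely the standard time-change/Slutsky reasoning that the paper has in mind when it asserts this as ``a simple corollary'' of Proposition~\ref{prop:height2} without writing out a proof. The identity $n^3t=n\abs{E(G_n)}(t/\alpha_n)$, the joint convergence $(Y_n,\alpha_n)\Rightarrow(Y,\nu)$, and the continuity of $(f,c)\mapsto f(\cdot/c)$ at deterministic $c=\nu>0$ are all sound, and the limit identification matches the stated constant.
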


We will establish the following results:
\begin{theorem}\label{thm:volume}
Assume the points $(V_i^n)_{i\in \N}$ are chosen according to uniform edge-volume.
\begin{enumerate}
\item Under conditions $(G)_{\sigma_d,\sigma_\phi}$, $(S)$ and $(R)_{\rho}$ we have that
\[
(n^{-1/2}X^{G_n}_{n\abs{E(G_n)}t})_{t \geq 0}\stackrel{n\to\infty}{\to} (\sqrt{\sigma_d} \sigma_\phi B^{\text{ISE}}_{(\rho\sigma_d)^{-1}t})_{t\geq 0}.
\]
\item If, in addition to conditions $(G)_{\sigma_d,\sigma_\phi}$, $(S)$ and $(R)_{\rho}$, there exists $\nu>0$ such that $\frac{|E(G_n)|}{n^2}$ converges to $\nu$ in probability, we have that
\[
(n^{-1/2}X^{G_n}_{n^3t})_{t \geq 0}\stackrel{n\to\infty}{\to} (\sqrt{\sigma_d} \sigma_\phi B^{\text{ISE}}_{(\nu\rho\sigma_d)^{-1}t})_{t\geq 0},
\]
\end{enumerate}
where the convergences are annealed and occur with the topology of uniform convergence over compact sets of time.
\end{theorem}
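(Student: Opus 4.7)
The plan is to reduce Theorem \ref{thm:volume} to Proposition \ref{prop:height2} and its corollary by establishing condition $(V)$ as a consequence of the remaining hypotheses together with the uniform edge-volume assumption. This is exactly the strategy used for Theorem \ref{thm:height2}: the main work of the paper, carried out in Sections \ref{sect_gv} and \ref{sect_gv2} for graphs conditioned on height, is the companion Proposition \ref{prop_cond_V}, which shows that condition $(G)^h$ plus uniform edge-volume sampling implies condition $(V)$. The key observation is that the argument is insensitive to the specific conditioning—it only uses the convergence of rescaled skeletons furnished by the $(G)$-type assumption. Replacing $(G)^h$ with $(G)_{\sigma_d,\sigma_\phi}$ and the limit tree $\mathfrak{T}^{h,K}$ with the unconditioned $K$-CRT $\mathfrak{T}^{(K)}$ produces the parallel statement in the volume-conditioned setting.

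Concretely, I would carry out the following steps. First, reproduce the construction of Section \ref{sect_gv}: build a finite-dimensional empirical measure $\hat\mu^{(n,K,M)}$ on the skeleton $\T^{(n,K)}$ by tallying, among the first $M$ spanning points $V_i^n$, the number routed to each descendant subtree via the projection $\pi^{(n,K)}$. Because the $V_i^n$ are sampled uniform in edge-volume, this empirical count on descendant sets $\overrightarrow{\T_x^{(n,K)}}$ is close (up to lower-order error in $n$ and $M$) to $\mu^{(n,K)}(\overrightarrow{\T_x^{(n,K)}})$, uniformly in $x \in V^*(\T^{(n,K)})$. Second, use condition $(G)_{\sigma_d,\sigma_\phi}$ and the strong law of large numbers on the limiting object to show that $\hat\mu^{(n,K,M)}$ simultaneously approximates $\lambda^{(n,K)}$ on initial segments, exploiting that uniform edge-volume samples converge to the natural volume measure on the limit $K$-CRT. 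Chaining these two approximations yields the sup-norm comparison between $\lambda^{(n,K)}$ and $\mu^{(n,K)}$ on descendant subtrees that is required by condition $(V)$, completing the analog of Proposition \ref{prop_cond_V}.

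Once condition $(V)$ is available, part (1) of Theorem \ref{thm:volume} follows immediately from Proposition \ref{prop:height2}: all four hypotheses $(G)_{\sigma_d,\sigma_\phi}$, $(S)$, $(V)$ and $(R)_\rho$ are met, giving the scaling limit $\sqrt{\sigma_d}\sigma_\phi B^{\text{ISE}}_{(\rho\sigma_d)^{-1} t}$ under the time-change $n|E(G_n)|$. Part (2) is then a direct consequence of the subsequent corollary of Proposition \ref{prop:height2}, since the additional assumption $|E(G_n)|/n^2 \to \nu$ in probability converts the time scale $n|E(G_n)|$ into $\nu n^3$ by an elementary time-change argument and Slutsky's theorem applied in the Skorokhod space.

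The principal obstacle is verifying that the proof of Proposition \ref{prop_cond_V} in Sections \ref{sect_gv} and \ref{sect_gv2} truly uses the height conditioning only as a convenient way to express the convergence of skeletons in the graph-spatial-tree topology $D$, and not through any structural feature of $\mathfrak{T}^h$ (such as its explicit encoding by a conditioned Brownian excursion or the specific form of $\lambda_{\mathfrak{T}^h}$). Once this transparency is confirmed, the transfer is routine because the framework of Section \ref{sect_graph_tree} treats both limit trees on the same footing. A secondary subtlety, less severe, is that the empirical measure argument must handle the fact that the total volume of the unconditioned ISE equals $1$ rather than $\lambda_{\mathfrak{T}^h}(\mathfrak{T}^h)$; this affects only the normalization and is precisely what removes the factor $\lambda_{\mathfrak{T}^h}(\mathfrak{T}^h)$ from the time change, yielding the cleaner constants that appear in the statement of Theorem \ref{thm:volume}.
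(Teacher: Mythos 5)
Your proposal matches the paper's route exactly: Theorem \ref{thm:volume} follows by combining Proposition \ref{prop_cond_V} (which yields condition $(V)$ from $(G)_{\sigma_d,\sigma_\phi}$ plus uniform edge-volume sampling) with Proposition \ref{prop:height2} and its corollary. The only quibble is that no "transfer" from the height-conditioned case is actually needed, since Sections \ref{sect_gv} and \ref{sect_gv2} already lead the discussion with $(G)_{\sigma_d,\sigma_\phi}$ as the primary case and state Proposition \ref{prop_cond_V} for both conditionings.
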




\section{Construction of an empirical measure to approximate condition $(V)$}\label{sect_gv}

In this section, we aim to show that either of the conditions, $(G)_{\sigma_d,\sigma_{\phi}}$ or $(G)_{\sigma_d,\sigma_{\phi}}^h$, imply condition $(V)$, when the sequence of points are chosen according to uniform edge volume. For the sake of clarity, we will lead the discussion with condition $(G)_{\sigma_d,\sigma_\phi}$ in mind, making the required remarks to deal with condition $(G)^h_{\sigma_d,\sigma_\phi}$ when necessary.

Condition $(V)$ holds exactly for the ISE. That is, recalling from Section \ref{sect:kskele} that $\lambda^{(\frak T,K)}$ is the normalized Lebesgue measure in $(\frak{T}^{(K)},d_{\frak T^{(K)}})$ and letting $\mu^{(\frak T,K)}$ be the measure in $\frak T^{(K)}$ defined as 
\[\mu^{(\frak T,K)}(A)=\lambda_{\frak T}(\{x\in\frak T: \pi^{\mathfrak T^{(K)}}(x)\in A\}).\] Then, we have that
\[
\lim_{K\to\infty}{\mathbb P}\left[ \sup_{x\in\frak T^{(K)}} \abs{\lambda^{(\frak T,K)}(\overrightarrow{\frak T_x^{(K)}})-\mu^{(\frak T,K)}(\overrightarrow{\frak T^{(K)}_x})}\geq\epsilon\right]=0.
\]
For a proof of the display above, see \eqref{eq:exact} in the proof of Proposition \ref{prop_cond_V}.

 Since condition $(G)_{\sigma_d,\sigma_{\phi}}$ tells us that $G_n$ and the ISE are similar for $n$ large, it is natural to believe that $(V)$ should hold. The difficulty is that $(G)_{\sigma_d,\sigma_{\phi}}$ only gives us access to, loosely speaking, finite-dimensional information and and condition $(V)$ is a uniform bound on the tree. The challenge will be to approximate condition $(V)$ using less information. 

For this, we recall that condition $(V)$ is a statement on the measure $\mu^{(n,K)}$ for large $n$ . Our strategy will be to build an \lq\lq empirical measure\rq\rq~$\hat{\mu}^{(n,K,K_1,K_2)}$ that approximates $\mu^{(n,K)}$ and which is measurable with respect to the shape $\T^{(n,K+K_1+K_2)}$ (and we will require no extra information about the volume of $\T^{(n,K)}$). In particular, this measurability condition means that $\hat{\mu}^{(n,K,K_1,K_2)}$ can be understood for large $n$ using condition $(G)_{\sigma_d,\sigma_{\phi}}$. Our condition $(V)$ will then become, up to error terms, a statement on $\hat{\mu}^{(n,K,K_1,K_2)}$. Then, by using condition $(G)_{\sigma_d,\sigma_{\phi}}$, we will turn our discrete problem (linked to $\hat{\mu}^{(n,K,K_1,K_2)}$) into a question on the ISE (linked to a measure $\hat{\mu}^{(\mathfrak{T},K,K_1,K_2)}$). Finally, by the same argument as in the discrete case, this problem on the ISE will  correspond, up to error terms, to proving condition $(V)$ for the ISE (which as mentioned previously is trivially true).

\subsection{Building an empirical measure}

Let us first describe our strategy for building  the empirical measure. We will build the measure $\hat{\mu}^{(n,K,K_1,K_2)}$ on $\mathcal{T}^{(n,K)}$ in two steps.
\begin{enumerate}
\item First, we will find the location of the atoms of the measure. For this, we will first span a high number $K_1$ of  points chosen according to edge volume and look at their projection onto $\mathcal{T}^{(n,K)}$. This number $K_1$ is chosen to ensure that, with high probability, most of the edge-volume of $G_n$ projected onto $\mathcal{T}^{(n,K+K_1)}$ will be located on the newly added branches ($\mathcal{T}^{(n,K+K_1)}\setminus \mathcal{T}^{(n,K)}$).
\item Then, we will find the edge-volume that should be associated to each of the atoms of the measure. This is done by adding a sufficiently high number of points $K_2$ such that we can do a precise empirical approximation of the edge-volume of $G_n$ which will be projected onto each one of the atoms of  
$\hat{\mu}^{(n,K,K_1,K_2)}$. This will essentially follow from the law of large numbers.
\end{enumerate}

\subsubsection{Location of the atoms of $\mu^{(n,K,K_1,K_2)}$}
 
For any $K_1>0$, we will consider the tree $\T^{(n,K+K_1)}$. We introduce the following notations (see Figure \ref{fig1}): 
\begin{enumerate}
\item $y^{(n,K,K_1)}_1, \ldots , y^{(n,K,K_1)}_{l^{(n,K,K_1)}}$ the points of  $\T^{(n,K)}$ which are of degree $3$ in $\T^{(n,K+K_1)}$ but not in $\T^{(n,K)}$. This means that we consider the branching vertices that were added to $\T^{(n,K)}$ in order to be able to connect the $K_1$ points added in $\T^{(n,K+K_1)}$. We point out that $l^{(n,K,K_1)}\leq K_1$.
\item for any $i\leq l^{(n,K,K_1)}$, we denote $s^{(n,K,K_1)}_i$ to be the set of points of $\T^{(n,K+K_1)}$ which are separated from $0$ by $y^{(n,K,K_1)}_i$ (including $y^{(n,K,K_1)}_i$). It is clear  that $s^{(n,K,K_1)}_i$ forms a tree which is connected. 
\item $S^{(n,K,K_1)}$ is formed by the union of all $s^{(n,K,K_1)}_i$ for $i\leq l^{(n,K,K_1)}$.  This is hence a forest with at most $K_1$ connected components. 
\end{enumerate}

\begin{remark}\label{rem_prop_def}
 We can notice that  $\{\pi^{(n,K)}(z),\ z\in S^{(n,K,K_1)}\}=\{y^{(n,K,K_1)}_i,\ i \leq l^{(n,K,K_1)}\}$.
 \end{remark}

\begin{remark}\label{rmk:defofS}
A similar construction can be made in the limiting setting, i.e.~on the continuum random tree $\mathfrak{T}$ (and $\mathfrak{T}^h$). This will lead to analogous notations, $S^{(K,K_1)}$, $(y_i^{(K,K_1)})_{i\leq l^{(K,K_1)}}$ and $(s_i^{(K,K_1)})_{i\leq l^{(K,K_1)}}$.
\end{remark}

Those definitions are illustrated in Figure~2. 

  \begin{figure}\label{fig1}
  \includegraphics[width=0.8\linewidth]{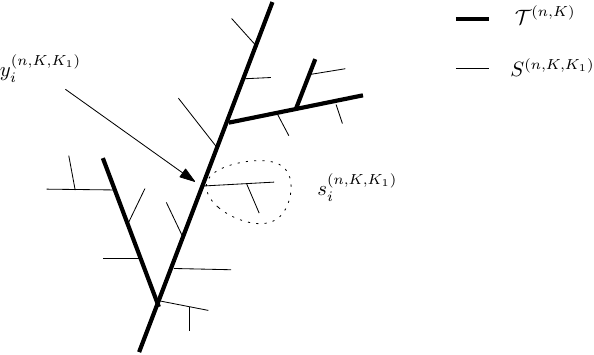}
  \caption{Notations associated to the construction of the atoms of the measure $\hat{\mu}^{(n,K,K_1,K_2)}$}
\end{figure}

We have already defined the projection of vertices at the beginning of the Section~\ref{sect_conditions}.  We will extend this definition to edges in the following way: let $e=[x,y]$ be an edge of $E(G_n)$ and $K'$ any positive integer then 
\begin{itemize}
\item if $\pi^{(n,K')}(x)=\pi^{(n,K')}(y)$ then we set $\pi^{(n,K')}(e)=\pi^{(n,K')}(x)$,
\item if $\pi^{(n,K')}(x)\neq \pi^{(n,K')}(y)$ then if $\pi^{(n,K')}(x)$ is the ancestor of $\pi^{(n,K')}(y)$ in $\mathcal{T}^{(n,K')}$ we set $\pi^{(n,K')}(e)=\pi^{(n,K')}(x)$ and in the other case we set $\pi^{(n,K')}(e)=\pi^{(n,K')}(y)$.
\end{itemize}

This means that $\pi^{(n,K')}(e)$ is the cut-point closest to $e$ such that from either endpoint of $e$ we have to go through $\pi^{(n,K')}(e)$ to go to the origin.

Let us start by a technical result.
\begin{lemma}\label{lem_step0}
Assume that either of Conditions $(G)_{\sigma_d,\sigma_{\phi}}$ or $(G)_{\sigma_d,\sigma_{\phi}}^h$ are verified with points chosen according to uniform edge-volume. Fix $K<\infty$, then for any $M<\infty$ there exists $C_1(M,K)<\infty$ such that for any $K_1\geq C_1$,
\begin{align*}
& \lim_{n \to \infty}{\mathbb P}[\text{ for all } i\leq M^{1/2},\ \pi^{(n,K+K_1)}(V^n_{K+K_1+i})\in S^{(n,K,K_1)}] \\
\geq & \bigl(1-\frac 1M\bigr)\exp\bigl(-\frac 2{M^{1/2}}\bigr).
\end{align*}
\end{lemma}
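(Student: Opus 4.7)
My plan is to transfer the question to the continuum CRT via condition $(G)$ (or $(G)^h$) and then carry out the estimate directly there using an exchangeability argument. The key observation is that the event in the statement is determined by the rooted ordered shape of the skeleton $\T^{(n,K+K_1+\lfloor M^{1/2}\rfloor)}$: for each $i\leq\lfloor M^{1/2}\rfloor$, it asks whether the $(K+K_1+i)$-th leaf hangs off an edge of $\T^{(n,K+K_1)}$ that is a descendant of some $y_j^{(n,K,K_1)}$, and both the identity of the $y_j$'s and the attachment point of each new leaf are combinatorial features of the rooted ordered shape. Since the $D$-metric used in condition $(G)^h_{\sigma_d,\sigma_\phi}$ assigns distance $\infty$ to trees of different shapes, and since there are only finitely many possible shapes on $K+K_1+\lfloor M^{1/2}\rfloor$ leaves, the $D$-convergence yields convergence of the probability of this shape event. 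Thus it suffices to compute the corresponding probability in the continuum, where the skeleton is $\mathfrak{T}^{h,K+K_1+\lfloor M^{1/2}\rfloor}$ and the sample points are i.i.d.\ uniform under $\lambda_{\mathfrak{T}^h}$.

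\paragraph{Continuum factorization and exchangeability.} In the continuum, conditioning on $\mathfrak{T}^{(K+K_1)}$ (together with the identification of the $y_j^{(K,K_1)}$'s) makes the additional points $V_{K+K_1+i}$, $i\leq\lfloor M^{1/2}\rfloor$, i.i.d.\ uniform and independent of the conditioning. Hence the event in the statement has probability
\[
\mathbb{E}\bigl[q^{\lfloor M^{1/2}\rfloor}\bigr],\qquad q := \lambda_{\mathfrak{T}^h}\bigl(\{x:\pi^{\mathfrak{T}^{(K+K_1)}}(x)\in S^{(K,K_1)}\}\bigr)/\lambda_{\mathfrak{T}^h}(\mathfrak{T}^h).
\]
To estimate $q$, observe that $1-q$ is the conditional probability that a fresh uniform $V$ has its projection $y_V$ onto $\mathfrak{T}^{(K)}$ weakly dominated by \emph{none} of the projections of $V_{K+1},\ldots,V_{K+K_1}$. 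By exchangeability of the $K_1+1$ samples $V,V_{K+1},\ldots,V_{K+K_1}$, this probability equals $1/(K_1+1)$ times the expected number of ancestor-free (``maximal'') projections onto $\mathfrak{T}^{(K)}$. Maximal projections form an antichain in $\mathfrak{T}^{(K)}$, and the maximum antichain size in a tree is bounded by its number of leaves, so $\mathbb{E}[1-q]\leq (K+1)/(K_1+1)$.

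\paragraph{Conclusion.} Set $C_1(M,K):=(K+1)M^2$. For $K_1\geq C_1$ Markov's inequality yields $\mathbb{P}[q\geq 1-1/M]\geq 1-1/M$. Using $\log(1-x)\geq -2x$ for $x\in[0,1/2]$, we have $(1-1/M)^{\lfloor M^{1/2}\rfloor}\geq\exp(-2/M^{1/2})$, and therefore
\[
\mathbb{E}\bigl[q^{\lfloor M^{1/2}\rfloor}\bigr]\geq (1-1/M)^{\lfloor M^{1/2}\rfloor}\,\mathbb{P}[q\geq 1-1/M]\geq \Bigl(1-\frac{1}{M}\Bigr)\exp\Bigl(-\frac{2}{M^{1/2}}\Bigr),
\]
which is the claimed bound. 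The main obstacle I expect is the transfer step: one needs to verify carefully that the event in the statement really is measurable with respect to the rooted ordered shape (so that condition $(G)$ alone suffices to pass to the limit, with no recourse to condition $(V)$-type volume information), and then to match the combinatorial description of the $y_j$'s and of the projections of the $V^n_{K+K_1+i}$'s in the discrete and continuum settings.
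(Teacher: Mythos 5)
Your proof is correct, and the decisive step --- the continuum estimate --- is genuinely different from the paper's. The transfer to the continuum is the same in both arguments: the paper likewise observes that the event is measurable with respect to the shape of $T^{(n,K+K_1+M^{1/2})}$ and invokes condition $(G)$ (shapes are clopen in the $D$-topology, so shape probabilities converge); your worry about matching the combinatorial descriptions of the $y_j$'s and of the projections is legitimate but resolves exactly as you expect, and the degenerate coincidences are null in the limit. Where you diverge is in the continuum bound. The paper conditions nothing and instead uses Aldous's line-breaking construction twice: first to show that the event $A(K,K_1,L)=\{|S^{(K,K_1)}|_{d_{\mathfrak T}}\geq L|\mathfrak T^{(K)}|_{d_{\mathfrak T}}\}$ has probability at least $1-\eta$ for $K_1$ large (the skeleton length diverges), and second to assert that each fresh point projects onto $\mathfrak T^{(K+K_1)}$ uniformly with respect to length measure, so that on $A(K,K_1,L)$ each projection lands in $S^{(K,K_1)}$ with conditional probability at least $L/(L+1)$; an induction over the $M^{1/2}$ points then gives $(1-\eta)(L/(L+1))^{M^{1/2}}$, and in fact the $K_1\to\infty$ limit equals $1$. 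You instead condition on $(\mathfrak T^h,V_1,\dots,V_{K+K_1})$, write the probability as $\ES[q^{\lfloor M^{1/2}\rfloor}]$ with $q$ the mass of the preimage of $S^{(K,K_1)}$ under the projection, and control $\ES[1-q]\leq (K+1)/(K_1+1)$ by exchangeability of the $K_1+1$ samples together with the antichain-versus-leaves bound in $\mathfrak T^{(K)}$; Markov plus $(1-1/M)^{M^{1/2}}\geq e^{-2/M^{1/2}}$ finishes. Your route buys three things: it avoids the line-breaking machinery entirely (the only probabilistic input is that the samples are conditionally i.i.d.\ from a measure on the tree), it produces an explicit threshold $C_1(M,K)=(K+1)M^2$, and it applies verbatim to $\mathfrak T^h$, whereas the paper must decompose $\mathfrak T^h$ as a mixture of fixed-volume trees and use Brownian scaling to reduce to the normalized CRT. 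The paper's argument, in exchange, yields the stronger fact that the continuum probability tends to $1$ as $K_1\to\infty$, though only the stated numerical bound is ever used. The one point to state carefully when writing this up is the identification, used in your exchangeability step, of the event $\{\pi^{\mathfrak T^{(K+K_1)}}(V)\in S^{(K,K_1)}\}$ with $\{\pi^{\mathfrak T^{(K)}}(V)\text{ is a weak descendant of some }\pi^{\mathfrak T^{(K)}}(V_{K+j})\}$, remembering that $S^{(K,K_1)}$ contains not only the new branches but also the portions of $\mathfrak T^{(K)}$ lying above the $y_j$'s; your computation handles this correctly.
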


\begin{proof}
Let $\frak{T}$ denote the normalized CRT, i.e., the one conditioned on having unit volume. Let us introduce the event $A(K,K_1,L)=\{|S^{(K,K_1)}|_{d_{\mathfrak{T}}} \geq L  |\mathfrak{T}^{(K)}|_{d_{\mathfrak{T}}}\}$, where $|\cdot|_{d_{\mathfrak{T}}}$ denotes the length according to $d_{\mathfrak{T}}$. We will start by showing that for any $L<\infty$ and $\eta>0$, there exists $K^\ast(\eta,L)$ such that for all $K_1>K^\ast(\eta,L)$,
\begin{equation}\label{a_crt1}
{\mathbb P}[A(K,K_1,L)] >1-\eta.
\end{equation}

It is immediate from the line-breaking construction of $\frak{T}$ (see Section~\ref{sect_line_break}) that the total length of $\frak{T}^{(K_1)}$ diverges with $K_1$, almost surely. 
 Also, for fixed $K$, the length of $\frak{T}^{(K)}$ is almost surely finite.
 Therefore, since $\abs{S^{(K,K_1)}}_{d_{\frak{T}}}=\abs{{\frak T}^{(K_1)}}_{d_{\frak{T}}}-\abs{{\frak T}^{(K)}}_{d_{\frak{T}}}$, it follows that $\abs{S^{(K,K_1)}}_{d_{\frak{T}}}$ diverges with $K_1$ (for fixed $K$). Display \eqref{a_crt1} follows immediately.

Using the line-breaking construction again, we see that for any $i\geq 1$ the point $\pi^{\mathfrak{T}^{(K+K_1)}}(V_{K+K_1+i})$ is uniformly distributed on $\mathfrak{T}^{(K+K_1)}$ which by \eqref{a_crt1} implies that,
\begin{equation}\label{a_ind0}
{\mathbb P}[\pi^{\mathfrak{T}^{(K+K_1)}}(V_{K+K_1+i}) \in S^{(K,K_1)}\mid A(K,K_1,L)]\geq\frac{L}{L+1}.
\end{equation}

Now, for any $j\geq 1$, let us denote the event 
\[
A_j(K,K_1,L):=\bigcap_{1\leq i\leq j} \{\pi^{\mathfrak{T}^{(K+K_1)}}(V_{K+K_1+j}) \in S^{(K,K_1)}\}\cap A(K,K_1,L),
\]
which we extend by setting $A_0(K,K_1,M):=A(K,K_1,M)$.

 Since $(V_l)_{l\geq 0}$ is an i.i.d.~family, we know that $\{\pi^{\mathfrak{T}^{(K+K_1)}}(V_{K+K_1+j+1}) \in S^{(K,K_1)}\}$ depends on $(V_l)_{l\leq K+K_1}$ (to define $\pi^{\mathfrak{T}^{(K+K_1)}}$ and $S^{(K,K_1)}$) and on $V_{K+K_1+j+1}$, but is independent of $(V_l)_{K+K_1+j\geq l> K+K_1}$. On the event $A_j(K,K_1,L)$, we know that we have $|S^{(K,K_1+j)}|_{d_{\mathfrak{T}}} \geq L  |\mathfrak{T}^{(K)}|_{d_{\mathfrak{T}}}$ and this means that the line-breaking construction implies 
\begin{equation}\label{a_ind1}
{\mathbb P}[\pi^{\mathfrak{T}^{(K+K_1)}}(V_{K+K_1+j+1}) \in S^{(K,K_1)}\mid A_j(K,K_1,L)]
\geq  \frac{L}{L+1}.
\end{equation}

Hence, \eqref{a_ind0} and~\eqref{a_ind1} imply by induction that 
\[
{\mathbb P}[\text{ for all } i\leq j,\ \pi^{\mathfrak{T}^{(K+K_1)}}(V_{K+K_1+i}) \in S^{(K,K_1)}\mid A(K,K_1,L)]\geq \Bigl(\frac{L}{L+1}\Bigr)^j.
\]
Applying for the previous equation for $j=M^{1/2}$, we obtain
\begin{align*}
&{\mathbb P}[\text{ for all } i\leq M^{1/2},\ \pi^{\mathfrak{T}^{(K+K_1)}}(V_{K+K_1+i}) \in S^{(K,K_1)}\mid A(K,K_1,L)] \\
\geq & \bigl(1-\frac 1{L+1}\bigr)^{M^{1/2}},
\end{align*}
and recalling~\eqref{a_crt1} we obtain
\[
{\mathbb{P}}[\text{ for all } i\leq M^{1/2},\ \pi^{\mathfrak{T}^{(K+K_1)}}(V_{K+K_1+i})\in S^{(K,K_1)}] \geq \bigl(1-\eta\bigr)\bigl(1-\frac 1{L+1}\bigr)^{M^{1/2}}.
\]
Since $\eta$ and $L$ are arbitrary, and the equation above holds for all $K_1\geq K^\ast(\eta,L)$, it follows that, for any $M\geq 0$,
\begin{equation}\label{eq:normalized}
\liminf_{K_1\to\infty}  {\mathbb{P}}[\text{ for all } i\leq M^{1/2},\ \pi^{\mathfrak{T}^{(K+K_1)}}(V_{K+K_1+i})\in S^{(K,K_1)}] =1.
\end{equation}

Now we turn our focus to the discrete.
We can notice that the event $\{\text{For all } i\leq M^{1/2},\ \pi^{T^{(n,K+K_1)}}(V^n_{K+K_1+i})\in S^{(n,K,K_1)}\}$ is measurable with respect the shape of $T^{(n,K+K_1+M^{1/2})}$  (recall the definition of shape in Section~\ref{sect_condG}). This leads us to use condition $(G)_{\sigma_d,\sigma_{\phi}}^h$ in order to show that, with probability going to 1 as $n$ goes to infinity, $T^{(n,K+K_1+M^{1/2})}$ and $T^{(K+K_1+M^{1/2})}$ have the same shape, which implies
\begin{align*}
& \lim_{n \to \infty}{\mathbb P}[\text{ for all } i\leq M^{1/2},\ \pi^{T^{(n,K+K_1)}}(V^n_{K+K_1+i})\in S^{(n,K,K_1)}] \\
=& {\mathbb P}[\text{ for all } i\leq M^{1/2},\ \pi^{\mathfrak{T}^{(K+K_1)}}(V_{K+K_1+i})\in S^{(K,K_1)}] \\
\geq & \bigl(1-\frac 1M\bigr)\exp\bigl(-\frac 2{M^{1/2}}\bigr),
\end{align*}
where we used the previous equation.  This proves the lemma for condition $(G)_{\sigma_d,\sigma_\phi}$. 

In order to get the lemma under condition $(G)_{\sigma_d,\sigma_\phi}^h$, it suffices to obtain \eqref{eq:normalized} for $\frak{T}^h$, the CRT conditioned on having height larger than $h$,  instead of $\frak{T}$. The rest of the proof is completely analogous.
It will be helpful to write $\frak{T}^h$ as a mixture of CRTs conditioned on a fixed volume and on having height greater than $h$. More precisely, let $\frak{T}_v$ be the CRT conditioned on having volume exactly equal to $v$. This is the same as conditioning the underlying Brownian excursion to have duration $v$. Moreover, $\frak{T}_v$ can be obtained from the normalized CRT via scaling, i.e., $\frak{T}_v=(\frak{T},\sqrt{v}d_{\frak{T}},v\lambda_{\frak{T}})$.
Now, let $\frak{T}_v^h$ be $\frak{T}_v$ conditioned on having height greater than $h$. Since, for any $v\geq0$, $\mathbb{P}[h(\frak{T}_v)\geq h ]>0$, this conditioning can be defined in the standard way
\[\mathbb{P}[\frak{T}_v^h\in A]:=\frac{\mathbb{P}[\frak{T}_v\in A,h(\frak{T}_v)\geq h]}{\mathbb{P}[h(\frak{T}_v)\geq h]}\]
for any measurable set $A$. The measurability of $A$ means that it is a Borelian subset of the set of ordered trees that can be constructed using a continuous excursion, as in Section \ref{sect:crt}, endowed with the topology inherited from the uniform convergence of the corresponding excursions.
Let $\nu$ be the distribution of the total volume of $\frak{T}^h$. The decomposition of $\frak{T}^h$ we were after is 
\begin{equation}\label{eq:mixture}
\mathbb{P}\left[\frak{T}^h\in A \right]=\int_0^\infty \mathbb{P}[\frak{T}_v^h\in A] \nu(dv),
\end{equation}
for all measurable $A$.
  
It follows from \eqref{eq:normalized} and the scaling relation between $\frak{T}$ and $\frak{T}_v$ that
\begin{equation}
\liminf_{K_1\to\infty}  {\mathbb P}[\text{ for all } i\leq M^{1/2},\ \pi^{\mathfrak{T}_v^{(K+K_1)}}(V_{K+K_1+i})\in S^{(K,K_1)}] =1
\end{equation}
Also, using that $\frak{T}^h_v$ is defined as $\frak{T}_v$ conditioned on a positive probability event, we get from the display above  that
\begin{equation}\label{eq:height}
\liminf_{K_1\to\infty}  {\mathbb P}[\text{ for all } i\leq M^{1/2},\ \pi^{\mathfrak{T}_v^{h,K+K_1}}(V_{K+K_1+i})\in S^{(K,K_1)}] =1.
\end{equation}
Finally, using \eqref{eq:mixture}, we can deduce from the above that
\begin{equation}\label{eq:forheightinthelimit}
\liminf_{K_1\to\infty}  {\mathbb P}[\text{ for all } i\leq M^{1/2},\ \pi^{\mathfrak{T}^{h,K+K_1}}(V_{K+K_1+i})\in S^{(K,K_1)}] =1.
\end{equation}
We finish the proof for condition $(G)_{\sigma_d,\sigma_\phi}^h$ in the same way that we did for condition $(G)_{\sigma_d,\sigma_\phi}$, using the display above instead of \eqref{eq:normalized}.
 \end{proof}

We will now prove that most of the edge-volume of $G_n$ projected on $\T^{(n,K+K_1)}$ is projected on $S^{(n,K,K_1)}$ (i.e.~on the $K_1$ last branches).
\begin{lemma}\label{lem_all_mass}
Assume that either of conditions $(G)_{\sigma_d,\sigma_{\phi}}$ or $(G)_{\sigma_d,\sigma_{\phi}}^h$ are verified with points chosen according to uniform edge-volume. Fix $K<\infty$, then for any $\epsilon,\epsilon'>0$, there exists $C_1(\epsilon,\epsilon',K)<\infty$ such that for any $K_1\geq C_1$ we have
\begin{align*}
& \liminf_{n\to \infty} {\mathbb P}[\left|\{e \in E(G_n), \pi^{(n,K+K_1)}(e)\in S^{(n,K,K_1)}\}\right| \geq (1-\epsilon) |E(G_n)|] \\
\geq & 1-\epsilon'.
\end{align*}
\end{lemma}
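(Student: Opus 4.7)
The plan is to combine Lemma~\ref{lem_step0} (which, via i.i.d.~sampling, controls the probability that a random vertex projects into $S^{(n,K,K_1)}$) with the uniform edge-volume hypothesis (which translates that sampling probability into a fraction of edges). Writing $B_n := \{x \in V(G_n): \pi^{(n,K+K_1)}(x) \in S^{(n,K,K_1)}\}$, the first step I would establish is the set identity
\[
\{e \in E(G_n): \pi^{(n,K+K_1)}(e) \in S^{(n,K,K_1)}\} = E(B_n^{\ast}),
\]
which is immediate from the definition of $\pi^{(n,K+K_1)}(e)$ as the more ancestral of the two endpoint projections, together with the fact that each $s_i^{(n,K,K_1)}$ is closed under taking $\T^{(n,K+K_1)}$-descendants. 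Setting $p_n := |E(B_n^{\ast})|/|E(G_n)|$, the goal becomes showing $p_n \geq 1-\epsilon$ with probability at least $1-\epsilon'$.

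Second, I would decompose $B_n$ into connected pieces. Let $y_{i_1},\ldots,y_{i_r}$ be those elements of $\{y_i^{(n,K,K_1)}\}$ that are not descendants of any other $y_j^{(n,K,K_1)}$ in $\T^{(n,K+K_1)}$, and set $B_{n,i_k} := \{x: \pi^{(n,K+K_1)}(x) \in s_{i_k}^{(n,K,K_1)}\}$. Then the $B_{n,i_k}$'s are pairwise disjoint (since distinct $s_{i_k}$'s lie in distinct branches), their union is $B_n$, each $B_{n,i_k}^{\ast}$ is connected because $y_{i_k}$ is a cut-point of $G_n$, and $r \leq l^{(n,K,K_1)} \leq K_1$. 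For any prescribed $\epsilon_0>0$, the uniform edge-volume hypothesis then yields an event of probability tending to $1$ on which $|\mathbb{P}[V_1^n \in B_{n,i_k} \mid G_n]-|E(B_{n,i_k}^{\ast})|/|E(G_n)|| \leq \epsilon_0$ simultaneously over all $k$ (crucially exploiting the supremum over connected subgraphs in Definition~\ref{def_edge_unif}). Summing the $r \leq K_1$ bounds gives $|p_n' - p_n| \leq K_1 \epsilon_0$, where $p_n' := \mathbb{P}[V_1^n \in B_n \mid G_n]$.

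Third, I would invoke Lemma~\ref{lem_step0}. Because the $V_i^n$'s are i.i.d.\ conditional on $G_n$, that lemma rephrases as
\[
\mathbb{E}[(p_n')^{M^{1/2}}] = \mathbb{P}\bigl[\forall\, i \leq M^{1/2},\ \pi^{(n,K+K_1)}(V^n_{K+K_1+i}) \in S^{(n,K,K_1)}\bigr] \geq (1-1/M)\exp(-2/M^{1/2}) - o(1),
\]
valid for $K_1 \geq C_1(M,K)$. Writing $\delta_M := 1-(1-1/M)\exp(-2/M^{1/2}) = O(M^{-1/2})$ and applying Markov to $1-(p_n')^{M^{1/2}} \in [0,1]$ against the threshold $1-(1-\epsilon/2)^{M^{1/2}}$ (which tends to $1$ as $M \to \infty$) yields $\mathbb{P}[p_n' \geq 1-\epsilon/2] \geq 1 - \epsilon'/2$ for $n$ large, provided $M = M(\epsilon,\epsilon')$ is chosen sufficiently large. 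Finally, taking $\epsilon_0 := \epsilon/(2K_1)$ and combining with the second-step bound on $|p_n - p_n'|$ gives $p_n \geq 1-\epsilon$ with probability at least $1-\epsilon'$.

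The main technical subtlety I expect is entirely in the second step: the sets $B_{n,i_k}$ are random and depend on $n$, so I genuinely need the uniformity over connected subgraphs in Definition~\ref{def_edge_unif} (a pointwise application would not suffice). The bound $r \leq K_1$ is what keeps the accumulated error $K_1 \epsilon_0$ vanishing as $n \to \infty$ with $K_1$ and $\epsilon$ fixed; if that bound failed, the whole scheme would collapse. Everything else is a direct translation from Lemma~\ref{lem_step0} and elementary bookkeeping.
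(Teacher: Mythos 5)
Your proposal is correct and follows essentially the same route as the paper: identify the edge set $\{e:\pi^{(n,K+K_1)}(e)\in S^{(n,K,K_1)}\}$ with the edges of the vertex preimage $\tilde S^{(n,K,K_1)}$, use the uniform edge-volume condition on its at most $K_1$ connected components (with tolerance $\epsilon/(2K_1)$ each) to relate the sampling probability $p_n'$ to the edge fraction, and then use Lemma~\ref{lem_step0} to force $p_n'$ close to $1$ because otherwise all $M^{1/2}$ extra points landing in $S^{(n,K,K_1)}$ would be exponentially unlikely in $M^{1/2}$. The only cosmetic difference is that you run the last step via Markov's inequality applied to $1-(p_n')^{M^{1/2}}$, whereas the paper conditions on the bad event $B_\epsilon(n)$ and uses a union bound; the two are quantitatively equivalent.
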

\begin{proof}
Fix $\epsilon, \epsilon'>0$. Let $B\subset V(G_n)$ and recall from the beginning of Section \ref{sec:simscalimthm} the definition of the subgraph $B^*$. Let us denote the event that
\begin{equation}\label{eq:defofa}
A_{\epsilon}(n):=\Bigl\{\max_{\substack{B\subset V(G_n) \\ B^* \text{connected}}} \abs{{\mathbb P}[V_1^n \in B\mid G_n] - \frac{\abs{E(B^*)}}{\abs{E(G_n)}}}<\frac{\epsilon}{2}\Bigr\},
\end{equation}
which occurs with high probability for $n$ large enough since the $V_i^n$ are chosen according to edge volume.

Recall that the points $V_i^n$ are chosen in an i.i.d.~fashion. We denote $\tilde{S}^{(n,K,K_1)}:=\{x\in G_n,\ \pi^{(n,K+K_1)}(x)\in S^{(n,K,K_1)}\}$. We know that $\tilde{S}^{(n,K,K_1)}$ has at most $K_1$ connected components, so we can notice that on $A_{ \epsilon/K_1}$ we have that $\abs{{\mathbb P}[V_{K+K_1+i}^n \in \tilde{S}^{(n,K,K_1)}\mid G_n]-\frac{|E[\tilde{S}^{(n,K,K_1)}]|}{|E(G_n)|}} \leq \epsilon/2$ for any $i\geq 0$.

Hence, on the event $A_{\epsilon/K_1}(n)$ (which is measurable with respect to $G_n$), we have for any $i\geq 0$
\[
{\mathbb P}[V^n_{K+K_1+i}\in \tilde{S}^{(n,K,K_1)}\mid G_n,\ (V^n_i)_{i\leq K+K_1}]\leq \frac{|E(\tilde{S}^{(n,K,K_1)})|}{|E(G_n)|}+\frac{\epsilon}2,
\]
and since the points $V^n_{K+K_1+i}$ are chosen independently, we have that, on  $A_{\epsilon/K_1}(n)$,
\begin{align*}
& {\mathbb P}[\text{ for all } i\leq M^{1/2},\ \pi^{T^{(n,K+K_1)}}(V^n_{K+K_1+i})\in S^{(n,K,K_1)} \mid G_n,\ (V^n_i)_{i\leq K+K_1}] \\
=&  \Bigl(\frac{|E(\tilde{S}^{(n,K,K_1)})|}{|E(G_n)|}+\frac{\epsilon}2 \Bigr)^{M^{1/2}},
\end{align*}
by the second point of Remark~\ref{rem_prop_def}.

Let us denote 
\[
B_{\epsilon}(n):=\left\{\left|\{e \in E(G_n), \pi^{(n,K+K_1)}(e)\in S^{(n,K,K_1)}\}\right| \leq (1-\epsilon)\left|E(G_n)\right|\right\},
\]
then, we can see that
\begin{align*}
& {\mathbb P}[\text{ for all } i\leq M^{1/2},\ \pi^{(n,K+K_1)}(V^n_{K+K_1+i})\in S^{(n,K,K_1)} \mid  A_{\epsilon/K_1}(n) \cap B_{\epsilon}(n)] \\ \leq & (1-\epsilon/2)^{M^{1/2}} \leq \exp(-\epsilon M^{1/2}/2).
\end{align*}

Putting together the previous elements, we can see that
\begin{align*}
 {\mathbb P}\left[B_\epsilon(n)\right]\leq& {\mathbb P}[\text{ for some } i\leq M^{1/2},\ \pi^{(n,K+K_1)}(V^n_{K+K_1+i})\notin S^{(n,K,K_1)}]\\
&+ {\mathbb P}[B_\epsilon(n);\text{ for all } i\leq M^{1/2},\ \pi^{(n,K+K_1)}(V^n_{K+K_1+i})\in S^{(n,K,K_1)}]\\
\leq& {\mathbb P}[\text{ for some } i\leq M^{1/2},\ \pi^{(n,K+K_1)}(V^n_{K+K_1+i})\notin S^{(n,K,K_1)}]+{\mathbb P}[ A_{\epsilon/K_1}(n)^c]\\
&+ {\mathbb P}[B_\epsilon(n)\cap A_{\epsilon/K_1}(n);\text{ for all } i\leq M^{1/2},\ \pi^{(n,K+K_1)}(V^n_{K+K_1+i})\in S^{(n,K,K_1)}]\\
\leq& {\mathbb P}[\text{ for some } i\leq M^{1/2},\ \pi^{(n,K+K_1)}(V^n_{K+K_1+i})\notin S^{(n,K,K_1)}]+{\mathbb P}[ A_{\epsilon/K_1}(n)^c]\\
&+ {\mathbb P}[\text{ for all } i\leq M^{1/2},\ \pi^{(n,K+K_1)}(V^n_{K+K_1+i})\in S^{(n,K,K_1)}\vert B_\epsilon(n)\cap A_{\epsilon/K_1}(n)]\\
\leq & {\mathbb P}[\text{ for some } i\leq M^{1/2},\ \pi^{(n,K+K_1)}(V^n_{K+K_1+i})\notin S^{(n,K,K_1)}] \\
   & + {\mathbb P}[ A_{\epsilon/K_1}(n)]+\exp(-\epsilon M^{1/2}/2),
   \end{align*}
where the second term goes to 0 by Definition~\ref{def_edge_unif}, and then by Lemma~\ref{lem_step0} we know that  for any $M$ there exists $K_1$ large enough  such that
\begin{align*}
& \limsup_{n \to \infty} {\mathbb P}[B_\epsilon(n)]  \leq  1- (1-\frac 1M)\exp(-\frac 2{M^{1/2}}) +\exp(-\epsilon M^{1/2}/2),
   \end{align*}
   and we can see that for any $\epsilon'>0$ there exists an $M$ large enough (provided we take $K_1$ large enough) such that right-hand side is lower than $\epsilon'$. This proves the lemma. 
\end{proof}

\subsubsection{Assigning a weight to each atom of $\hat{\mu}^{(n,K,K_1,K_2)}$}Our goal in this section is to estimate the amount of mass that should be put to each atom $y^{(n,K,K_1)}_i$ of $\hat{\mu}^{(n,K)}$ of $\hat{\mu}^{(n,K,K_1,K_2)}$. We want to do this using information depending only on $\mathcal{T}^{(n,K')}$ for some large $K'$.

Set 
\[
\hat{\mu}^{(n,K,K_1,K_2)}(y^{(n,K,K_1)}_i)=\frac{\left|\{j \leq K_2,\  \pi^{(n,K)}(V^n_{K+K_1+j})=y^{(n,K,K_1)}_i\}\right|}{K_2}.
\]
and recall that for any $y\in V^*(\T^{(n,K)})$
\[
\mu^{(n,K)}(y)=\frac{\left|\{e\in E(G_n),\ \pi^{(n,K)}(e)=y\}\right|}{|E(G_n)|}.
\]

In essence we span $K_1$ point to create the locations of the atoms and then an extra $K_2$ points to assign a weight to those atoms.

\begin{lemma}\label{lem_approx_branch}
Assume that either of conditions $(G)_{\sigma_d,\sigma_{\phi}}$ or $(G)_{\sigma_d,\sigma_{\phi}}^h$ are verified with points chosen according to uniform edge-volume.
For any $K,K_1<\infty$ and $\epsilon,\epsilon'>0$, there exists $C_2(K,K_1,\epsilon,\epsilon')$ such that for any $K_2\geq C_2$ we have
\begin{align*}
 &\lim_{n\to \infty} {\mathbb P}\Bigl[\text{ for all $i\leq l^{(K,K_1)}$},\ \abs{ \mu^{(n,K)}(y^{(n,K,K_1)}_i)-\hat{\mu}^{(n,K,K_1,K_2)}(y^{(n,K,K_1)}_i) }<\frac{\epsilon}{K_1}\Bigr]\\
\geq &1-\epsilon'.
\end{align*}
\end{lemma}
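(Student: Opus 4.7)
The proof combines Hoeffding's inequality with the uniform edge-volume hypothesis. Conditionally on $G_n$ and $(V^n_l)_{l\leq K+K_1}$ (which together determine $\T^{(n,K)}$ and all branching points $y_i^{(n,K,K_1)}$), the independent samples $V^n_{K+K_1+j}$, $j=1,\ldots,K_2$, are i.i.d., so the indicator variables $\1{\pi^{(n,K)}(V^n_{K+K_1+j})=y_i^{(n,K,K_1)}}$ form an i.i.d.\ Bernoulli sequence with success probability
\[
p_i^n:=\PR\bigl[\pi^{(n,K)}(V^n_{K+K_1+1})=y_i^{(n,K,K_1)}\,\big|\,G_n,(V^n_l)_{l\leq K+K_1}\bigr].
\]
Hoeffding's inequality and a union bound over $i\leq l^{(n,K,K_1)}\leq K_1$ give
\[
\PR\Bigl[\exists\, i\leq l^{(n,K,K_1)}:\abs{\hat\mu^{(n,K,K_1,K_2)}(y_i^{(n,K,K_1)})-p_i^n}\geq \epsilon/(2K_1)\Bigr]\leq 2K_1\exp\bigl(-K_2\epsilon^2/(2K_1^2)\bigr),
\]
which is smaller than $\epsilon'/2$ as soon as $K_2\geq C_2(K,K_1,\epsilon,\epsilon')$.

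It remains to compare $p_i^n$ to $\mu^{(n,K)}(y_i^{(n,K,K_1)})$. Introduce the sausage $B_i:=\{v\in V(G_n):\pi^{(n,K)}(v)=y_i^{(n,K,K_1)}\}$; it is connected in $G_n$ since any two of its vertices are joined by a path through $y_i^{(n,K,K_1)}$ that cannot cross another cut-point of $V^*(\T^{(n,K)})$. Applying Definition~\ref{def_edge_unif} to the connected subgraph $B_i^\ast$ with slack $\delta:=\epsilon/(4K_1)$, on an event $A_\delta(n)$ with $\PR[A_\delta(n)]\to 1$ we have $\abs{p_i^n - |E(B_i^*)|/|E(G_n)|}<\delta$ uniformly in $i$. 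Moreover, by definition $y_i^{(n,K,K_1)}$ has degree at most $2$ in $\T^{(n,K)}$ (its degree in $\T^{(n,K)}$ is not $3$), so the symmetric difference between $E(B_i^*)$ and the edge set $A_i:=\{e\in E(G_n):\pi^{(n,K)}(e)=y_i^{(n,K,K_1)}\}$ that defines $\mu^{(n,K)}(y_i^{(n,K,K_1)})$ consists of at most two cut-bonds going to neighbouring sausages. Since $|E(G_n)|\geq hn\to\infty$ (and similarly in the unconditioned setting, using that $|E(G_n)|$ is of order $n^2$),
\[
\abs{\tfrac{|E(B_i^*)|}{|E(G_n)|}-\mu^{(n,K)}(y_i^{(n,K,K_1)})}\leq \tfrac{2}{|E(G_n)|}<\delta
\]
for $n$ large enough, uniformly in $i$. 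The triangle inequality then yields $\abs{p_i^n-\mu^{(n,K)}(y_i^{(n,K,K_1)})}<\epsilon/(2K_1)$; combining this with the Hoeffding estimate above completes the argument.

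The main subtle point is the comparison between the sausage-interior edge count $|E(B_i^*)|$, which is the quantity the uniform edge-volume sampling of Definition~\ref{def_edge_unif} naturally estimates, and the projected edge count $|A_i|$ that enters the definition of $\mu^{(n,K)}$. Bounding their discrepancy by an absolute constant is precisely what allows the boundary contribution to be absorbed in the error term, and it rests entirely on the fact that each branching point $y_i^{(n,K,K_1)}$ has degree at most $2$ in $\T^{(n,K)}$. If one tried the same pointwise argument at a general cut-point of $V^\ast(\T^{(n,K)})$ (whose degree need not be bounded), the boundary contribution would no longer be negligible and a telescoping/cumulative argument, analogous to the one required for the uniform condition $(V)$ itself, would have to be invoked.
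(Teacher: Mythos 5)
Your proof is correct and follows essentially the same route as the paper's: condition on $G_n$ and the first $K+K_1$ points so that the $K_2$ extra samples give i.i.d.\ Bernoulli indicators, apply a concentration inequality (you use Hoeffding where the paper uses Chebyshev/Markov, which is immaterial) with a union bound over the at most $K_1$ atoms, and identify the success probability with $\mu^{(n,K)}(y_i^{(n,K,K_1)})$ via the uniform edge-volume condition applied to the connected sausage of $y_i^{(n,K,K_1)}$ (which coincides with the paper's set $\tilde s_i^{(n,K,K_1)}$ by Remark~\ref{rem_prop_def}). Your explicit treatment of the $O(1)$ discrepancy between $\abs{E(B_i^*)}$ and the projected edge count, which the paper passes over by asserting exact equality, is a welcome refinement but does not change the argument.
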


\begin{proof}


Recall the definition of $A_{\epsilon}(n)$ from \eqref{eq:defofa}.

Denote $\tilde{s}_i^{(n,K,K_1)}:=\{x\in G_n,\ \pi^{(n,K+K_1)}(x)\in s_i^{(n,K,K_1)}\}$. Since for all  $i\leq l^{(K,K_1)}$ the set $\tilde{s}_i^{(n,K,K_1)}$ is connected we know that  on $A_{\epsilon}(n)$ we have $\abs{{\mathbb P}[V_1^n \in \tilde{s}_i^{(n,K,K_1)}\mid G_n] - \frac{\abs{E(\tilde{s}_i^{(n,K,K_1)})}}{\abs{E(G_n)}}}<\frac{\epsilon}2$.  Now, by the second point of Remark~\ref{rem_prop_def}, $\mu^{(n,K)}(y_i^{(n,K,K_1)})=\frac{\abs{E(\tilde{s}_i^{(n,K,K_1)})}}{\abs{E(G_n)}}$, this means that, on $A_{\epsilon}(n)$,
\[
\abs{{\mathbb P}[V_{K+K_1+j}^n \in \tilde{s}_i^{(n,K,K_1)}\mid G_n,(V^n_j)_{j\leq K+K_1}] - \mu^{(n,K)}(y_i^{(n,K,K_1)})}<\frac{\epsilon}2 \]
 for any $j\geq 1$.

For any $1\leq i\leq l^{(K,K_1)}$, let us notice that, conditionally on $G_n$ and  $(V^n_j)_{j\leq K+K_1}$, we have that $\hat{\mu}^{(n,K,K_1,K_2)}(y_i^{(n,K,K_1)})=\frac 1 {K_2} \sum_{i=1}^{K_2} B_i$, where $B_i$ are i.i.d.~random variables such that $P[B_i=1]=1-P[B_i=0]= {\mathbb P}[V_{K+K_1+i}\in \tilde{s}_i^{(n,K,K_1)}\mid G_n, (V^n_j)_{j\leq K+K_1}]$ (here we used the first point of Remark~\ref{rem_prop_def}). This means, by Markov's inequality, for all $i\leq l^{(K,K_1)}$
\begin{align*}
& {\mathbb P}\Bigl[\abs{\hat{\mu}^{(n,K+K_1+K_2)}(y_i^{(n,K,K_1)})-\mu^{(n,K)}(y_i^{(n,K,K_1)})}>\epsilon\Big| A_{\epsilon}(n)\Bigr]\\
\leq & {\mathbb P}\Bigl[\abs{\hat{\mu}^{(n,K+K_1+K_2)}(y_i^{(n,K,K_1)})-{\mathbb P}[V_{K+K_1+i}^n\in \tilde{s}_i^{(n,K,K_1)}\mid G_n, (V^n_j)_{j\leq K+K_1}]}>\epsilon/2\Big| A_{\epsilon}(n)\Bigr]\\
\leq & \frac 4{\epsilon^2 K_2}.
\end{align*}

We can use the previous computation to see that
\begin{align*}
&  {\mathbb P}\Bigl[\exists i\leq l^{(K,K_1)},\ \abs{ \mu^{(n,K)}(y_i^{(n,K,K_1)})-\hat{\mu}^{(n,K,K_1,K_2)}(y_i^{(n,K,K_1)}) }>\frac{\epsilon}{K_1}\Bigr] \\
\leq & {\mathbb E}\Bigl[\sum_{i=1}^{l^{(K,K_1)}} {\mathbb P}\Bigl[\abs{ \mu^{(n,K)}(y_i^{(n,K,K_1)})-\hat{\mu}^{(n,K,K_1,K_2)}(y_i^{(n,K,K_1)}) }>\frac{\epsilon}{K_1}\mid A_{\epsilon}(n)\Bigr]\Bigr] \\
& \qquad \qquad +{\mathbb P}[A_{\epsilon}(n)^c]\\
\leq &  {\mathbb E}\Bigl[\sum_{i=1}^{l^{(K,K_1)}} \frac {4K_1^2}{\epsilon^2 K_2}\Bigr] +{\mathbb P}[A_{\epsilon}(n)^c] \\
\leq &\frac {4K_1^3}{\epsilon^2 K_2} +{\mathbb P}[A_{\epsilon}(n)^c],
\end{align*}
since $l^{(K,K_1)}\leq K_1$. By taking $n$ to infinity the second probability goes to 0 by Definition~\ref{def_edge_unif} and then we choose $K_2$ large enough to make this quantity smaller than $\epsilon'$, which proves the lemma.
\end{proof}

\subsection{Conclusion}

By construction the measure $\hat{\mu}^{(n,K,K_1,K_2)}$ is measurable with respect to $\T^{(n,K+K_1+K_2)}$ and has support on $\T^{(n,K)}$. 
Its other key property can be obtained by combining Lemma~\ref{lem_all_mass} and Lemma~\ref{lem_approx_branch} to prove the following. Let $d_{TV}$ denote the total variation distance between measures.
\begin{lemma}\label{lem_mu_approx}
Assume that either of conditions $(G)_{\sigma_d,\sigma_{\phi}}$ or $(G)_{\sigma_d,\sigma_{\phi}}^h$ are verified with points chosen according to uniform edge-volume.
For any $\epsilon,\epsilon'>0$ and $K<\infty$ there exists $C_1(\epsilon,\epsilon',K)<\infty$, such that, for any $K_1\geq C_1$ there exists $C_2(\epsilon,\epsilon',K,K_1)<\infty$, such that, for any $K_2\geq C_2 $ we have that
\[
\limsup_{n\to \infty} {\mathbb P}[d_{TV}(\hat{\mu}^{(n,K,K_1,K_2)},\mu^{(n,K)})\geq \epsilon]\leq \epsilon'.
\]
\end{lemma}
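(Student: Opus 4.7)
The plan is to write the total variation distance as a sum over vertices and bound it by two pieces: an approximation error at the atoms $y_i^{(n,K,K_1)}$ (controlled by Lemma~\ref{lem_approx_branch}) and the mass of $\mu^{(n,K)}$ sitting outside the set of atoms (controlled by Lemma~\ref{lem_all_mass}). Concretely, $\mu^{(n,K)}$ is a probability measure on $V^*(\T^{(n,K)})$ (every edge of $G_n$ has a projection under $\pi^{(n,K)}$), whereas $\hat\mu^{(n,K,K_1,K_2)}$ is supported on $\{y_1^{(n,K,K_1)},\ldots,y_{l^{(n,K,K_1)}}^{(n,K,K_1)}\}\subseteq V^*(\T^{(n,K)})$, so that
\[
2\,d_{TV}(\hat\mu^{(n,K,K_1,K_2)},\mu^{(n,K)})
\leq \sum_{i=1}^{l^{(n,K,K_1)}}\bigl|\hat\mu^{(n,K,K_1,K_2)}(y_i)-\mu^{(n,K)}(y_i)\bigr|+\sum_{y\in V^*(\T^{(n,K)})\setminus\{y_i\}_i}\mu^{(n,K)}(y).
\]

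First, I would choose $K_1$ large so that the second sum is small. The key observation, relying on Remark~\ref{rem_prop_def}, is that an edge $e\in E(G_n)$ satisfying $\pi^{(n,K+K_1)}(e)\in S^{(n,K,K_1)}$ automatically satisfies $\pi^{(n,K)}(e)\in\{y_i\}_i$, because both endpoints of $e$ project under $\pi^{(n,K+K_1)}$ to vertices of $S^{(n,K,K_1)}$, whose $\pi^{(n,K)}$-image is contained in $\{y_i\}_i$. Consequently
\[
\sum_{y\in V^*(\T^{(n,K)})\setminus\{y_i\}_i}\mu^{(n,K)}(y)\leq \frac{\bigl|\{e\in E(G_n):\pi^{(n,K+K_1)}(e)\notin S^{(n,K,K_1)}\}\bigr|}{|E(G_n)|},
\]
and Lemma~\ref{lem_all_mass}, applied with parameters $\epsilon/4$ and $\epsilon'/2$, supplies a threshold $C_1(\epsilon,\epsilon',K)$ such that for $K_1\geq C_1$ this quantity is at most $\epsilon/4$ with $\liminf_n$-probability at least $1-\epsilon'/2$.

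Next, having fixed such a $K_1$, I would apply Lemma~\ref{lem_approx_branch} with parameters $\epsilon/4$ and $\epsilon'/2$ to obtain $C_2(\epsilon,\epsilon',K,K_1)$ such that for $K_2\geq C_2$, with $\liminf_n$-probability at least $1-\epsilon'/2$, every $i\leq l^{(n,K,K_1)}$ satisfies $|\hat\mu^{(n,K,K_1,K_2)}(y_i)-\mu^{(n,K)}(y_i)|<\epsilon/(4K_1)$. Since $l^{(n,K,K_1)}\leq K_1$, the first sum is then at most $\epsilon/4$. A union bound on the two favorable events yields
\[
\limsup_{n\to\infty}{\mathbb P}\bigl[d_{TV}(\hat\mu^{(n,K,K_1,K_2)},\mu^{(n,K)})\geq \epsilon\bigr]\leq \epsilon',
\]
as desired.

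The proof is essentially a bookkeeping argument and no step presents a substantial technical obstacle; the only point requiring some care is the small geometric fact that the set of edges projecting under $\pi^{(n,K)}$ outside $\{y_i\}_i$ is contained in the edges projecting under $\pi^{(n,K+K_1)}$ outside $S^{(n,K,K_1)}$, which is what makes Lemma~\ref{lem_all_mass} directly usable. Everything else is a two-fold application of the two preceding lemmas with halved error parameters.
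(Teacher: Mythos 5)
Your proposal is correct and follows essentially the same route as the paper's proof: decompose the total variation into the contribution of the atoms $y_i^{(n,K,K_1)}$ (handled by Lemma~\ref{lem_approx_branch}, using $l^{(n,K,K_1)}\leq K_1$) and the mass of $\mu^{(n,K)}$ off the atoms (handled by Lemma~\ref{lem_all_mass} via Remark~\ref{rem_prop_def}), then take a union bound. The geometric containment you single out is exactly the point the paper uses implicitly, so nothing is missing.
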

\begin{proof}
Fix $\epsilon, \epsilon'>0$. Let $A$ be a subset of $\T^{(n,K)}$, then
\begin{align*}
\hat{\mu}^{(n,K,K_1,K_2)}(A)=& \hat{\mu}^{(n,K,K_1,K_2)}(A\setminus (\cup_{i=1}^{l^{(n,K,K_1)}} y_i^{(n,K,K_1)})) \\& \qquad +\sum_{i=1}^{l^{(n,K,K_1)}} {\bf 1}_{\{y_i^{(n,K,K_1)}\in A\}} \hat{\mu}^{(n,K,K_1,K_2)}(y_i^{(n,K,K_1)}),
\end{align*}
with a similar formula holding for $\mu^{(n,K)}$. This means that the total variation
\begin{align*}
&\max_{A\subset \T^{(n,K)}} \abs{\hat{\mu}^{(n,K,K_1,K_2)}(A)-\mu^{(n,K)}(A)} \\
\leq &\max_{A\subset \T^{(n,K)}}  \abs{\hat{\mu}^{(n,K,K_1,K_2)}(A\setminus (\cup_{i=1}^{l^{(n,K,K_1)}} y_i^{(n,K,K_1)}))-\mu^{(n,K)}(A\setminus (\cup_{i=1}^{l^{(n,K,K_1)}} y_i^{(n,K,K_1)}))} \\
& + \sum_{i=1}^{l^{(n,K,K_1)}} \abs{\hat{\mu}^{(n,K,K_1,K_2)}(y_i^{(n,K,K_1)})-\mu^{(n,K)}(y_i^{(n,K,K_1)})}.
\end{align*}

Firstly,  by Lemma~\ref{lem_approx_branch} and the fact that $l^{(n,K,K_1)}\leq K_1$ we know that
\[
\limsup_{n\to \infty} {\mathbb P}\Bigl[\sum_{i=1}^{l^{(n,K,K_1)}} \abs{\hat{\mu}^{(n,K,K_1,K_2)}(y_i^{(n,K,K_1)})-\mu^{(n,K)}(y_i^{(n,K,K_1)})} \geq \epsilon\Bigr]\leq \epsilon',
\]

Secondly, we have $\hat{\mu}^{(n,K,K_1,K_2)}(A\setminus (\cup_{i=1}^{l^{(n,K,K_1)}} y_i^{(n,K,K_1)}))=0$, and by Lemma~\ref{lem_all_mass} we know that
\[
\limsup_{n\to \infty} {\mathbb P}\Bigl[\max_{A\subset \T^{(n,K)}}\mu^{(n,K)}(A\setminus (\cup_{i=1}^{l^{(n,K,K_1)}} y_i^{(n,K,K_1)}))\geq \epsilon \Bigr]\leq \epsilon'.
\]

Combining the last three equations we can see that
\[
\limsup_{n\to \infty} {\mathbb P}\Bigl[d_{TV}(\hat{\mu}^{(n,K,K_1,K_2)},\mu^{(n,K)})\geq 2\epsilon\Bigr]\leq 2\epsilon',
\]
which implies the result.
\end{proof}

\section{Condition $(G)$ implies condition $(V)$.}\label{sect_gv2}

We are now going to use the empirical measure of the previous section to deduce condition $(V)$ from condition $(G)_{\sigma_d,\sigma_{\phi}}$ or from condition $(G)_{\sigma_d,\sigma_{\phi}}^h$. 

\subsection{Extending the previous proofs to the continuous case}

The proof of Lemma~\ref{lem_all_mass} (which is done in the discrete setting) can be adapted to the continuous case and, recalling the definitions from Remark \ref{rmk:defofS}, we obtain 
\begin{lemma}\label{lem_all_mass_cont}
For any $\epsilon,\epsilon'>0$ and $K<\infty$ there exists $C_1(\epsilon, \epsilon',K)<\infty$ such that for any $K_1\geq C_1$ we have
\[
{\mathbb P}[\lambda_\frak T(x\in \mathfrak{T},\ \pi^{\mathfrak{T}^{(K+K_1)}}(x)\in S^{(\mathfrak{T},K,K_1)}) \geq 1-\epsilon ] 
\geq  1-\epsilon'
\]
and
\[
{\mathbb P}\left[\lambda_{\frak T^h}^1(x\in \mathfrak{T}^h,\ \pi^{\mathfrak{T}^{h,K+K_1}}(x)\in S^{(\mathfrak{T},K,K_1)}) \geq 1-\epsilon \right] 
\geq  1-\epsilon',
\]
where $\lambda_{\frak T^h}^1$ denotes the normalization of $\lambda_{\frak T^h}$. That is $\lambda_{\frak T^h}^1(\cdot)=\lambda_{\frak T^h}(\frak T^h)^{-1}\lambda_{\frak T^h}(\cdot)$. 
\end{lemma}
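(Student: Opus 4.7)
The plan is to transport the argument of Lemma~\ref{lem_all_mass} to the continuum, exploiting the fact that there the spanning sequence $(V_i)_{i\in\N}$ is genuinely i.i.d.\ with respect to $\lambda_{\frak T}$ (resp.\ $\lambda_{\frak T^h}^1$). In particular, no edge-volume approximation event is required and no $K_2$-step of assigning weights to individual atoms is needed, since the lemma only asks for control on a single total mass.

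I focus on the first display; the second is identical once \eqref{eq:normalized} is replaced by \eqref{eq:forheightinthelimit}. Set
\[
p_{K_1}:=\lambda_{\frak T}\bigl(\bigl\{x\in\mathfrak{T}:\pi^{\mathfrak{T}^{(K+K_1)}}(x)\in S^{(\mathfrak{T},K,K_1)}\bigr\}\bigr),
\]
a random variable measurable with respect to $(\mathfrak{T},V_1,\ldots,V_{K+K_1})$. Because $(V_i)_{i\in\N}$ are i.i.d.\ uniform under $\lambda_{\frak T}$, conditionally on this data each further projected sample lands in $S^{(\mathfrak{T},K,K_1)}$ independently with probability $p_{K_1}$, hence
\[
\ES\bigl[p_{K_1}^{\lfloor M^{1/2}\rfloor}\bigr]=\PR\bigl[\text{for all } i\leq \lfloor M^{1/2}\rfloor,\ \pi^{\mathfrak{T}^{(K+K_1)}}(V_{K+K_1+i})\in S^{(\mathfrak{T},K,K_1)}\bigr].
\]
By \eqref{eq:normalized}, for every fixed $M$ this last probability can be made at least $(1-1/M)\exp(-2/M^{1/2})$ by taking $K_1$ large enough, giving a matching lower bound on $\ES[p_{K_1}^{\lfloor M^{1/2}\rfloor}]$.

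The conclusion is then a Markov-type splitting. On the event $\{p_{K_1}\leq 1-\epsilon\}$ the integrand is at most $e^{-\epsilon M^{1/2}}$, while elsewhere $p_{K_1}\leq 1$, so
\[
\ES\bigl[p_{K_1}^{\lfloor M^{1/2}\rfloor}\bigr]\leq 1-\PR[p_{K_1}\leq 1-\epsilon]\bigl(1-e^{-\epsilon M^{1/2}}\bigr).
\]
Combining the two bounds forces
\[
\PR[p_{K_1}\leq 1-\epsilon]\bigl(1-e^{-\epsilon M^{1/2}}\bigr)\leq 1-(1-1/M)\exp(-2/M^{1/2}).
\]
Choosing $M=M(\epsilon,\epsilon')$ large enough that the right-hand side is below $\epsilon'/2$ and $1-e^{-\epsilon M^{1/2}}\geq 1/2$, and then $K_1\geq C_1(\epsilon,\epsilon',K)$ large enough to justify the lower bound for that $M$, gives $\PR[p_{K_1}\leq 1-\epsilon]\leq\epsilon'$. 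The second display follows identically, with $\frak T^h$ and $\lambda_{\frak T^h}^1$ in place of $\frak T$ and $\lambda_{\frak T}$, and \eqref{eq:forheightinthelimit} in place of \eqref{eq:normalized}; the conditional independence structure underlying the Markov-type splitting is unaffected by conditioning on height. There is no real obstacle here: the continuous statement is essentially the discrete argument stripped of the $A_\epsilon(n)$ bookkeeping.
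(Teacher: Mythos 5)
Your proof is correct and follows essentially the same route as the paper, which simply adapts the discrete argument of Lemma~\ref{lem_all_mass}: the key input is still \eqref{eq:normalized} (resp.\ \eqref{eq:forheightinthelimit}), combined with the observation that if the projected mass of $S^{(\mathfrak{T},K,K_1)}$ were below $1-\epsilon$ then $M^{1/2}$ further i.i.d.\ samples could not all project into it with high probability. Your identity $\ES[p_{K_1}^{\lfloor M^{1/2}\rfloor}]=\PR[\text{all }M^{1/2}\text{ samples project into }S^{(\mathfrak{T},K,K_1)}]$ is a clean way of packaging the conditioning, and dropping the $A_\epsilon(n)$ events is exactly the simplification the paper points out.
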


Defining 
\[
\hat{\mu}^{(\mathfrak{T},K,K_1,K_2)}(y^{(\mathfrak{T},K,K_1)}_i)=\frac{\left|\{j \leq K_2,\  \pi^{\mathfrak{T}^{(K)}}(V_{K+K_1+j})=y^{(\mathfrak{T},K,K_1)}_i\}\right|}{K_2}.
\]
and $\hat{\mu}^{(\mathfrak{T}^h,K,K_1,K_2)}$ in an analog fashion, with $\frak T^h$ in place of $\frak T$.  
we can also obtain adapt the proof of Lemma~\ref{lem_mu_approx} to the continuous setting to get
\begin{lemma}\label{lem_mu_approx_cont}
For any $\epsilon,\epsilon'>0$ and $K<\infty$ there exists $C_1(\epsilon,\epsilon',K)<\infty$ such that for any $K_1\geq C_1$ there exists $C_2(\epsilon,\epsilon',K,K_1)<\infty$ such that for any $K_2\geq C_2 $ we have that 
\[
 {\mathbb{P}}[d_{TV}(\hat{\mu}^{(\mathfrak{T},K,K_1,K_2)},\mu^{(\mathfrak{T},K)})\geq \epsilon]\leq \epsilon'
\]
and
\[
 {\mathbb{P}}[d_{TV}(\hat{\mu}^{(\mathfrak{T}^h,K,K_1,K_2)},\mu^{(\mathfrak{T}^h,K)})\geq \epsilon]\leq \epsilon'
.\]
\end{lemma}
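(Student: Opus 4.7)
The plan is to mirror the two-step structure of the discrete proof of Lemma~\ref{lem_mu_approx}, exploiting the simplification that in the continuous setting the points $(V_i)_{i\in\N}$ are, by construction, i.i.d.~samples from $\lambda_{\mathfrak{T}}$ (respectively from its normalization $\lambda_{\mathfrak{T}^h}^1$), so no analog of the event $A_\epsilon(n)$ from \eqref{eq:defofa} is required. The first ingredient, which controls the $\mu^{(\mathfrak{T},K)}$-mass lying off the atoms $\{y_i^{(\mathfrak{T},K,K_1)}\}_{i\leq l^{(\mathfrak{T},K,K_1)}}$, is Lemma~\ref{lem_all_mass_cont}, already stated in the excerpt. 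The only real work is to establish the continuous counterpart of Lemma~\ref{lem_approx_branch}: for any $K,K_1<\infty$ and $\epsilon,\epsilon'>0$, there exists $C_2(K,K_1,\epsilon,\epsilon')$ such that for $K_2\geq C_2$,
\[
\PR\Bigl[\exists\, i\leq l^{(\mathfrak{T},K,K_1)}:\ \bigl|\mu^{(\mathfrak{T},K)}(y_i^{(\mathfrak{T},K,K_1)})-\hat{\mu}^{(\mathfrak{T},K,K_1,K_2)}(y_i^{(\mathfrak{T},K,K_1)})\bigr|>\tfrac{\epsilon}{K_1}\Bigr]\leq\epsilon',
\]
and analogously for $\mathfrak{T}^h$.

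To prove this estimate, I would condition on $\mathfrak{T}$ and $(V_j)_{j\leq K+K_1}$. Under this conditioning the atoms $y_i^{(\mathfrak{T},K,K_1)}$ and the level sets $\{x\in\mathfrak{T}:\pi^{\mathfrak{T}^{(K)}}(x)=y_i^{(\mathfrak{T},K,K_1)}\}$ are deterministic, while the remaining $V_{K+K_1+j}$, $j\geq1$, are still i.i.d.~uniform on $\mathfrak{T}$. By the very definition of $\mu^{(\mathfrak{T},K)}$ recalled at the beginning of Section~\ref{sect_gv}, the conditional probability that $\pi^{\mathfrak{T}^{(K)}}(V_{K+K_1+j})=y_i^{(\mathfrak{T},K,K_1)}$ equals $\mu^{(\mathfrak{T},K)}(y_i^{(\mathfrak{T},K,K_1)})$. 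Hence $K_2\,\hat{\mu}^{(\mathfrak{T},K,K_1,K_2)}(y_i^{(\mathfrak{T},K,K_1)})$ is a sum of $K_2$ conditionally i.i.d.~Bernoulli variables with the correct mean, and Chebyshev's inequality together with a union bound over the at most $K_1$ atoms yields the displayed estimate as soon as $K_2$ is sufficiently large in terms of $K_1,\epsilon,\epsilon'$.

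With these two ingredients in hand, the conclusion follows from the identical total variation decomposition used in the proof of Lemma~\ref{lem_mu_approx}: for any measurable $A\subset\mathfrak{T}^{(K)}$,
\[
\bigl|\hat{\mu}^{(\mathfrak{T},K,K_1,K_2)}(A)-\mu^{(\mathfrak{T},K)}(A)\bigr|\leq \sum_{i=1}^{l^{(\mathfrak{T},K,K_1)}}\bigl|\hat{\mu}(y_i)-\mu(y_i)\bigr|+\mu^{(\mathfrak{T},K)}\bigl(\mathfrak{T}^{(K)}\setminus\{y_i:i\leq l^{(\mathfrak{T},K,K_1)}\}\bigr),
\]
since $\hat{\mu}^{(\mathfrak{T},K,K_1,K_2)}$ is supported on the atoms. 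The first term is made smaller than $\epsilon$ by the Chebyshev estimate (after choosing $K_2$ large), and the second by Lemma~\ref{lem_all_mass_cont} (after choosing $K_1$ large). The $\mathfrak{T}^h$ case is handled identically, working throughout with $\lambda_{\mathfrak{T}^h}^1$ in place of $\lambda_{\mathfrak{T}}$, which is precisely the distribution from which the $V_i$ are sampled in the $h$-conditioned construction of Section~\ref{sect_KISE}; the only bookkeeping is to keep track of this normalization. I do not foresee any serious obstacle; the main conceptual gain over Lemma~\ref{lem_mu_approx} is that the edge-volume approximation event $A_\epsilon(n)$ disappears in the continuous limit, so the Chebyshev step is unconditional.
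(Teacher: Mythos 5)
Your proposal is correct and follows essentially the same route as the paper, which itself only remarks that the proof of Lemma~\ref{lem_mu_approx} adapts to the continuous setting with the simplification that the events $A_\epsilon(n)$ can be dropped since the $V_i$ are exactly i.i.d.\ samples from $\lambda_{\mathfrak{T}}$ (resp.\ $\lambda^1_{\mathfrak{T}^h}$). Your filling-in of the details -- the conditional Bernoulli/Chebyshev estimate replacing Lemma~\ref{lem_approx_branch}, Lemma~\ref{lem_all_mass_cont} for the off-atom mass, and the same total-variation decomposition -- is precisely the intended argument.
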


The only significant difference is that the proofs are simpler in this case because we work with uniform points and not points chosen according to uniform edge-volume (which means we can forgo  the events $A_{\epsilon}(n)$).

\subsection{A preliminary result}

By condition $(G)_{\sigma_d,\sigma_{\phi}}$ and the Skorohod representation Theorem, we can assume that, for all $K\in\N$, the augmented random graphs $(G_n, (V^n_i)_{i = 1,\dots,K})_{n\in\N}$ and $\mathfrak{T}^{(K)}$ are defined in a common probability space $(\Omega^{(K)},\mathcal{F}^{(K)},{\mathbf{P}}^{(K)})$ and that  
\begin{equation}\label{eq:couplinggeometry}
(T^{(n,K)},d^{(n,K)},n^{-1/2}\phi_{\T_n})\stackrel{n\to\infty}{\to}(\mathfrak{T}^{(K)}, \sigma_d d_{\mathfrak{T}^{(K)}},\sigma_{\phi}\sqrt{\sigma_d}\phi_{\mathfrak{T}^{(K)}}) \qquad {\mathbf{P}}^{(K)}\text{-a.s.},
\end{equation}
in the topology induced by $D$, where $D$ is the natural topology on graph spatial trees (see display \eqref{eq:defofD}).

Similarly, under condition $(G)_{\sigma_d,\sigma_{\phi}}^h$ we can assume that
\begin{equation}\label{eq:couplinggeometryh}
(T^{(n,K)},d^{(n,K)},n^{-1/2}\phi_{\T_n})\stackrel{n\to\infty}{\to}(\mathfrak{T}^{h,K}, \sigma_d d_{\mathfrak{T}^{h,K}},\sigma_{\phi}\sqrt{\sigma_d}\phi_{\mathfrak{T}^{h,K}}) \qquad {\mathbf{P}}^{(K)}\text{-a.s.}.
\end{equation}

By condition $(G)_{\sigma_d,\sigma_\phi}$ (resp, $(G)_{\sigma_d,\sigma_\phi}^h$), we know that $\frak{T}^{(n,K)}$ and $\frak{T}^{(K)}$ (resp. $\frak{T}^{h,K}$) are homeomorphic for $n$ large enough, and moreover that the homeomorphism can be chosen to preserve lexicographical order.
  Let, $\Upsilon_{n,K}:\frak{T}^{(n,K)}\to\frak{T}^{(K)}$ be defined as
  \begin{equation}\label{eq:defofupsilon}
  \Upsilon_{n,K}=\Upsilon_{\frak{T}^{(n,K)},\frak{T}^{(K)}},
  \end{equation} where, $\Upsilon_{\frak{T}^{(n,K)},\frak{T}^{(K)}}$ is as in Section \ref{sect_condG}.

Let us prove the following
\begin{lemma}\label{a_lem_tech}
If a sequence of random augmented graph verifiy condition $(G)_{\sigma_d,\sigma_{\phi}}$, then we can see that for any $\epsilon>0$ and $K>0$, 
\[
\limsup_{n\to \infty} {\bf P}^{(K)}\Bigl[ \sup_{x\in \T^{(n,K)}} \abs{\lambda^{(n,K)}(\overrightarrow{\T_x^{(n,K)}}) -\lambda^{(\mathfrak{T},K)}(\overrightarrow{\mathfrak T_{\Upsilon_{n,K}(x)}^{(K)}})} >\epsilon\Bigr]=0.
\]
Under condition $(G)_{\sigma_d,\sigma_{\phi}}^h$, the same holds true with $\frak{T}^h$ in place of $\frak T$.
\end{lemma}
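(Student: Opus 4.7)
My strategy is to reduce the claim, via condition $(G)$, to the uniform convergence of a family of affine functions on the finitely many edges of the reduced shape. The key observation is that $\lambda^{(n,K)}(\overrightarrow{\T_x^{(n,K)}})$ depends only on finitely many edge lengths of the reduced tree, so it should be controllable by the finite-dimensional information delivered by $(G)$.

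First, I would invoke the Skorohod coupling recorded in \eqref{eq:couplinggeometry} and work on a full-$\mathbf{P}^{(K)}$-measure event on which the shape of $T^{(n,K)}$ stabilizes to that of $\mathfrak{T}^{(K)}$ and the individual edge lengths satisfy $|e|_{d^{(n,K)}} \to \sigma_d |e'|_{d_{\mathfrak{T}^{(K)}}}$ for each pair of corresponding edges $(e,e')$. By Remark~\ref{rem_not_depend}, $\T^{(n,K)}$ and $T^{(n,K)}$ are isomorphic as rooted real trees (the reduction only collapses degree-2 vertices while preserving the metric), so $\lambda^{(n,K)}$ coincides with the normalized Lebesgue measure on the reduced tree, and $\Upsilon_{n,K}$ extends naturally to a map $\T^{(n,K)} \to \mathfrak{T}^{(K)}$ which is linear in arc length along each edge of the reduced shape.

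Next, I would fix an edge $e$ of the reduced shape with parent endpoint $u$ and child endpoint $w$, parametrized by $\alpha \in [0,1]$ with $\alpha=0$ at $u$. For $x$ interior to $e$ at position $\alpha$, the descendant set $\overrightarrow{\T_x^{(n,K)}}$ splits (up to a single point of measure zero) as the arc from $x$ to $w$ and the strict descendant subtree below $w$, giving
\[
\lambda^{(n,K)}(\overrightarrow{\T_x^{(n,K)}}) = (1-\alpha)\frac{|e|_{d^{(n,K)}}}{L_n} + V_e^{(n)},
\]
where $L_n$ is the total $d^{(n,K)}$-length of $\T^{(n,K)}$ and $V_e^{(n)}$ is the $\lambda^{(n,K)}$-mass of the strict descendants of $w$. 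An identical formula with the same $\alpha$ holds in $\mathfrak{T}^{(K)}$ at the image point $\Upsilon_{n,K}(x)$, since $\Upsilon_{n,K}$ is linear on $e$. The factor $\sigma_d$ cancels between $|e|_{d^{(n,K)}}$ and $L_n$; moreover both $V_e^{(n)}$ and the ratio $|e|_{d^{(n,K)}}/L_n$ are continuous functions of the finite vector of converging edge lengths, so they converge to their limiting counterparts $V_{e'}^{(\infty)}$ and $|e'|_{d_{\mathfrak{T}^{(K)}}}/L_\infty$. Consequently the affine function of $\alpha$ converges uniformly on $[0,1]$, and since the reduced shape has only finitely many edges, I obtain
\[
\sup_{x\in \T^{(n,K)}} \abs{\lambda^{(n,K)}(\overrightarrow{\T_x^{(n,K)}}) - \lambda^{(\mathfrak{T},K)}(\overrightarrow{\mathfrak{T}_{\Upsilon_{n,K}(x)}^{(K)}})} \xrightarrow[n\to\infty]{} 0 \qquad \mathbf{P}^{(K)}\text{-a.s.}
\]
Bounded convergence then yields the desired vanishing of the probability. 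The argument under condition $(G)^h_{\sigma_d,\sigma_\phi}$ is verbatim the same with $\mathfrak{T}^{h,K}$ in place of $\mathfrak{T}^{(K)}$.

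The only mild delicacy I anticipate is the behavior at branching vertices of the reduced shape, where the function $x \mapsto \lambda^{(n,K)}(\overrightarrow{\T_x^{(n,K)}})$ is discontinuous (depending on which child edge one approaches from). However, since the vertex set is finite and $\Upsilon_{n,K}$ preserves the incidence structure once shapes match, each vertex value is simply a sum of the quantities $|e|_{d^{(n,K)}}/L_n + V_e^{(n)}$ over the incident child edges, and this combinatorial sum converges to the analogous one on $\mathfrak{T}^{(K)}$; hence the vertex values contribute nothing beyond what the edge-by-edge uniform convergence already provides.
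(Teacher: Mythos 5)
Your proof is correct, but it takes a genuinely different and more direct route than the paper's. You exploit the fact that, once the shapes of $T^{(n,K)}$ and $\mathfrak{T}^{(K)}$ coincide (which the Skorohod coupling \eqref{eq:couplinggeometry} guarantees for $n$ large, a.s.), the map $x\mapsto\lambda^{(n,K)}(\overrightarrow{\T_x^{(n,K)}})$ is affine in the arc-length parameter on each of the at most $2K-1$ edges of the reduced shape, with coefficients that are scale-invariant continuous functions of the finite edge-length vector; convergence of that vector under $(G)_{\sigma_d,\sigma_\phi}$ at level $K$ then gives uniform convergence edge by edge, the constant $\sigma_d$ cancelling in the normalized ratios, and the branch-point values are finite sums of the same quantities. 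The paper avoids relying on this explicit affine structure: it first reduces the supremum over all of $\T^{(n,K)}$ to a supremum over the finitely many vertices of a larger skeleton $T^{(n,K')}$ by means of a $\delta$-dense net of sampled points (via Lemmas 5.1 and 5.2 of \cite{BCFa}), using only the Lipschitz-type identity relating $\lambda^{(n,K)}(\overrightarrow{\T_x^{(n,K)}})$ to its value at the nearby net point $\tilde x_{n,K,K'}$, and then invokes condition $(G)$ at the larger level $K'$ together with the compatibility $\widetilde{\Upsilon_{n,K}(x)}_{K,K'}=\Upsilon_{n,K}(\tilde x_{n,K,K'})$. Your argument buys brevity and needs $(G)$ only at level $K$; the paper's net argument buys the intermediate approximations \eqref{a_approx_1} and \eqref{a_approx_2} in a form aligned with the finite-dimensional sampling philosophy used throughout Sections 6 and 7, and is more robust in that it uses only the $1$-Lipschitz property of the descendant-mass functional rather than its exact piecewise-affine form.
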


\begin{proof}
We will direct the discussion with condition $(G)_{\sigma_d,\sigma_\phi}$ in mind, making the necessary remarks to deal with $(G)^h_{\sigma_d,\sigma_\phi}$ when necessary.
The strategy of the proof is to restrict the supremum appearing in the lemma to a supremum over vertices that are branching points or leaves of some $\T^{(n,K')}$ which will then allow us to use condition $(G)_{\sigma_d,\sigma_{\phi}}$ to show that for large $n$ the distances on $\T^{(n,K)}$ are close to those on $\mathfrak{T}^{(K)}$.
\vspace{0.2cm}

{\it Step 1: Constructing a dense set}

\vspace{0.2cm}

Fix $\delta>0$ and $K<K'\in \N$ and an augmented graph $(G,(V_i)_{i\in \N})$ such that $G(K')$ is tree like (recall definition \ref{def_thin}). We say that $V_0,\ldots V_{K'}$ is $\delta$-dense in $\T^{(G,K)}$ if 
\begin{enumerate}
\item the set $\{\pi^{(G,K)}(V_{l})\text{  with }l \leq K'\}$ has at least one point on every edge of $\mathfrak{T}^{(G,K)}$.
\item If $x$, $y \in \{\pi^{(G,K)}(V_{l})\text{  with } l \leq K'\}$  are neighbours (in  the sense that on the unique  path between them there is no other point in $\{\pi^{(G,K)}(V_{l}) \text{ with }l \leq K'\}$) then there exists $i_x,i_y \leq K'$ such that $\pi^{(G,K)}(V_{i_y})=x$, $\pi^{(G,K)}(V_{i_y})=y$ and $d_{\T^{(G,K')}}(V_{i_x},V_{i_y})\leq \delta$.
\end{enumerate}

This is illustrated in Figure \ref{fig:2}.

Let us recall from Section \ref{sect:crt} that $\Xi$ denotes the law of the CRT and $\lambda_{\mathfrak{T}}$ is the uniform measure in the CRT.
By Lemmas 5.1 and 5.2 in~\cite{BCFa}, we know that for any $\epsilon'>0$ and $\delta>0$, there exists $K'$ such that we have 
\begin{equation}\label{a_delta_blabla1}
\Xi \otimes (\lambda_{\mathfrak{T}})^{\otimes \N}[V_0\ldots V_{K'} \text{ is not $\delta$-dense in }\mathfrak{T}^{(K)}] \leq \epsilon'
\end{equation}
and
\begin{equation}\label{a_delta_blabla2}
  \limsup_{n\to \infty} {\bf P}^{(K)}[V_0^n\ldots V_{K'}^n \text{ is not $\delta$-dense in  }\T^{(n,K)}] \leq \epsilon'.
\end{equation}
Recall that $\lambda^1_{\mathfrak{T}^h}$ is the normalization of $\lambda_{\mathfrak{T}^h}$. The same proof applies for $\frak T^h$ to get that for any $\epsilon'>0$ and $\delta>0$, there exists $K'$ such that we have 
\begin{equation}\label{a_delta_blabla12}
\Xi^h \otimes (\lambda^1_{\mathfrak{T}^h})^{\otimes \N}[V_0\ldots V_{K'} \text{ is not $\delta$-dense in }\mathfrak{T}^{h,K}] \leq \epsilon'
\end{equation}
and equation~\eqref{a_delta_blabla2} also holds under condition $(G)^h_{\sigma_d,\sigma_\phi}$.

\vspace{0.2cm}

{\it Step 2: }

\vspace{0.2cm}

Assume that $V_0^n\ldots V_{K'}^n$ is $\delta$-dense in $\T^{(n,K)}$. Let $x\in \T^{(n,K)}$, and define $\tilde{x}_{n,K,K'}$ in the following manner
\begin{itemize}
\item if $x\in V(T^{(n,K)})$ (i.e.~a leaf or a branch point of $T^{(n,K)}$) then $\tilde{x}_{n,K,K'}=x$,
\item if $x\notin V(T^{(n,K)})$, then $x$ is in exactly one of the edges of $T^{(n,K)}$ (of which there are at most $2K+1$) and we set $\tilde{x}_{n,K,K'}=y$ where $y$ is the first descendant of $x$ in $\T^{(n,K)}$ of the form $\pi^{(n,K)}(V^n_l)$ with $l\leq K'$. 
\end{itemize}

See Figure 3 for an illustration.

  \begin{figure}\label{fig:2}
  \includegraphics[width=0.8\linewidth]{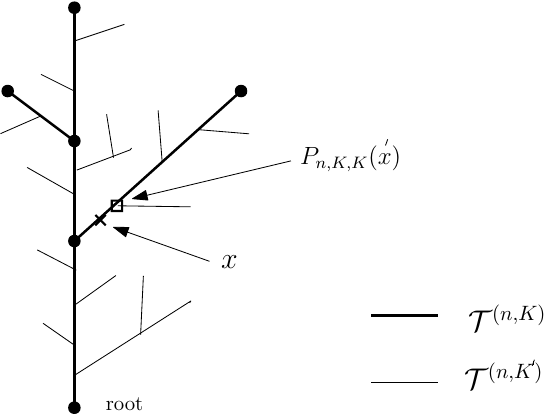}
  \caption{A dense set for $K'$ large enough and an example of $\vec{x}_{n,K,K'}$}
\end{figure}

We emphasize that $\tilde{x}_{n,K,K'}\in V(T^{(n,K')})$ and as such there is only a bounded (at most  $2K'+1$) possibilities for the values of  $\tilde{x}_{n,K,K'}$. Thus, this quantity is suitable for asymptotic analysis using condition $(G)_{\sigma_d,\sigma_{\phi}}$.

We may notice that the definition $\tilde{x}_{n,K,K'}$ implies that for any $x\in \T^{(n,K)}$
\[
\lambda^{(n,K)}(\overrightarrow{\T_x^{(n,K)}}) =\lambda^{(n,K)}(\overrightarrow{\T_{\tilde{x}_{n,K,K'}}^{(n,K)}}) +d^{(n,K)}(x,\tilde{x}_{n,K,K'}),
\]
and by the second point of the definition of a $\delta$-dense set, we see that for any $x\in \T^{(n,K)}$
\[
\abs{\lambda^{(n,K)}(\overrightarrow{\T_x^{(n,K)}}) -\lambda^{(n,K)}(\overrightarrow{\T_{\tilde{x}_{n,K,K'}}^{(n,K)}})}\leq \delta.
\]

This means, using~\eqref{a_delta_blabla2}, that for any $\epsilon',\delta>0$ and for any $K<\infty $, there exists $K'<\infty$ such that
\begin{equation}\label{a_approx_1}
  \limsup_{n\to \infty} {\bf P}^{(K)}\left[\sup_{x\in \T^{(n,K)}} \abs{\lambda^{(n,K)}(\overrightarrow{\T_x^{(n,K)}}) -\lambda^{(n,K)}(\overrightarrow{\T_{\tilde{x}_{n,K,K'}}^{(n,K)}})}> \delta\right] \leq \epsilon'.
  \end{equation}
  
This proof can be extended to the continuous setting (using~\eqref{a_delta_blabla1}) and we find that for any $\epsilon',\delta>0$ and for any $K<\infty $, there exists $K'<\infty$ such that
\begin{equation}\label{a_approx_2}
M \otimes (\lambda^{\mathfrak{T}})^{\otimes \N}\left[\sup_{x\in \mathfrak{T}^{(K)}} \abs{\lambda^{(K)}(\overrightarrow{\mathfrak T_{\Upsilon_{n,K}(x)}^{(K)}}) -\lambda^{(K)}(\overrightarrow{\mathfrak T_{\tilde{x}_{K,K'}}^{(K)}})}> \delta\right] \leq \epsilon',
\end{equation}
  where $\tilde{x}_{K,K'}$ is defined in the same way as $\tilde{x}_{n,K,K'}$ but where all quantities are without script $n$. 
\vspace{0.2cm}

{\it Step 3: Conclusion}

\vspace{0.2cm}
Recall the definition of $\Upsilon_{n,K}$ from the paragraph below display \eqref{eq:couplinggeometry}.
We can see that 
\begin{align*}
&  \sup_{x\in \T^{(n,K)}} \abs{\lambda^{(n,K)}(\overrightarrow{\T_x^{(n,K)}}) -\lambda^{(\mathfrak{T},K)}(\overrightarrow{\mathfrak T_{\Upsilon_{n,K}(x)}^{(K)}})}  \\
\leq & \sup_{x\in \T^{(n,K)}} \abs{\lambda^{(n,K)}(\overrightarrow{\T_x^{(n,K)}}) -\lambda^{(n,K)}(\overrightarrow{\T_{\tilde{x}_{n,K,K'}}^{(n,K)}})}\\ & + \sup_{x\in \T^{(n,K)}} \abs{\lambda^{(n,K)}(\overrightarrow{\T_{\tilde{x}_{n,K,K'}}^{(n,K)}})-\lambda^{(\mathfrak{T},K)}(\overrightarrow{\mathfrak T_{\widetilde{\Upsilon_{n,K}(x)}_{K,K'}}^{(K)}})}
\\& +  \sup_{x\in \T^{(n,K)}} \abs{\lambda^{(\mathfrak{T},K)}(\overrightarrow{\mathfrak T_{\widetilde{\Upsilon_{n,K}(x)}_{K,K'}}^{(K)}})-\lambda^{(\mathfrak{T},K)}(\overrightarrow{\mathfrak{T}_{\Upsilon_{n,K}(x)}^{(K)}})}.
\end{align*}

Therefore, using ~\eqref{a_approx_1} and~\eqref{a_approx_2} with $\delta=\epsilon/4$, we can see that  for some $K'$ large enough
\begin{align} \label{a_fin1}
& \limsup_{n\to \infty} {\bf P}^{(K)}\Bigl[ \sup_{x\in \T^{(n,K)}} \abs{\lambda^{(n,K)}(\overrightarrow{\T_x^{(n,K)}}) -\lambda^{(\mathfrak{T},K)}(\overrightarrow{\mathfrak T_x^{(K)}})} >\epsilon] \\ \nonumber
\leq & 2\epsilon' +   \limsup_{n\to \infty} {\bf P}^{(K)}\left[ \sup_{x\in \T^{(n,K)}} \abs{\lambda^{(n,K)}(\overrightarrow{\T_{\tilde{x}_{n,K,K'}}^{(n,K)}})-\lambda^{(\mathfrak{T},K)}(\overrightarrow{\mathfrak T_{\widetilde{\Upsilon_{n,K}(x)}_{K,K'}}^{(K)}})}>\epsilon/2\right].
\end{align}

We are going to show that $\widetilde{\Upsilon_{n,K}(x)}_{K,K'}=\Upsilon_{n,K}(\tilde{x}_{n,K,K'})$ for any $x\in  \T^{(n,K)}$.  There are two cases
\begin{itemize}
\item $x=V_i^n$ for some $i\leq K$: In this case, $\widetilde{V_i^n}_{n,K,K'}=V_i^n$, $\Upsilon_{n,K}(V_i^n)=V_i$ and $\widetilde{V_i}=V_i$ by the definitions of $\tilde{x}$ and $\Upsilon_{n,K}$. In particular this yields $\widetilde{\Upsilon_{n,K}(V_i^n)}_{K,K'}=\Upsilon_{n,K}(\widetilde{V_i^n}_{n,K,K'})$.
\item $x \in \T^{(n,K)} \setminus \{V_i^n,\ i\leq K\}$:  We denote $e_x^{K'}$ the unique edge $[e_-,e^+]$  in $E(T^{(n,K')})$ (where $e_-\prec e_+$) such that $x$ lies on the path between $e_-$ and $e_+$. We know that $\Upsilon_{n,K}(e_x^{K'})$ is an edge of $E(\mathfrak{T}^{(K')})$ which we write $[f_-,f_+]$ (where $f_-\prec f_+$). Using those notations, we can see that $\tilde{x}_{n,K,K'}=e^+$, so that $\Upsilon_{n,K}(\tilde{x}_{n,K,K'})=\Upsilon_{n,K}(e^+)=f^+$, and on the other hand, since $\Upsilon_{n,K}(x) \in \Upsilon_{n,K}(e_x^{K'})$ (and $\Upsilon_{n,K}(x)\neq f_-$ since $x \neq e_-$) we know that $\widetilde{\Upsilon_{n,K}(x)}_{K,K'}=f^+$. Hence we also have in this case that $\widetilde{\Upsilon_{n,K}(x)}_{K,K'}=\Upsilon_{n,K}(\tilde{x}_{n,K,K'})$.
\end{itemize}

This means that the remaining unknown term of~\eqref{a_fin1} can be upper-bounded in the following manner
\begin{align} \label{a_fin2}
&  \limsup_{n\to \infty} {\bf P}^{(K)}\Bigl[ \sup_{x\in \T^{(n,K)}} \abs{\lambda^{(n,K)}(\overrightarrow{\T_{\tilde{x}_{n,K,K'}}^{(n,K)}})-\lambda^{(\mathfrak{T},K)}(\overrightarrow{\mathfrak T_{\widetilde{\Upsilon_{n,K}(x)}_{K,K'}}^{(K)}})}>\epsilon/2\Bigr] \\ \nonumber
= &  \limsup_{n\to \infty} {\bf P}^{(K)}\Bigl[ \sup_{x\in \T^{(n,K)}} \abs{\lambda^{(n,K)}(\overrightarrow{\T_{\tilde{x}_{n,K,K'}}^{(n,K)}})-\lambda^{(\mathfrak{T},K)}(\overrightarrow{\mathfrak T_{\Upsilon_{n,K}(\tilde{x}_{n,K,K'})}^{(K)}})}>\epsilon/2\Bigr] \\ \nonumber
\leq  & \limsup_{n\to \infty} {\bf P}^{(K)}\Bigl[ \sup_{x\in V(T^{(n,K')})} \abs{\lambda^{(n,K)}(\overrightarrow{\T_{x}^{(n,K)}})-\lambda^{(\mathfrak T,K)}(\overrightarrow{\mathfrak T_{\Upsilon_{n,K}(x)}^{(K)}})}>\epsilon/2\Bigr],
\end{align}
where we used that $\tilde{x}_{n,K,K'} \in V( T^{(n,K')})$.
Now, we can notice that the first event on the right hand side of the equation above is measurable with respect to the shape of $T^{(n,K')}$ and the intrinsic distances between $V(T^{(n,K')})$. Thus by condition $(G)_{\sigma_d,\sigma_{\phi}}$ we can see that 

\begin{equation}  \label{a_fin3}
\limsup_{n\to \infty} {\bf P}^{(K)}\Bigl[ \sup_{x\in V(T^{(n,K')})} \abs{\lambda^{(n,K)}(\overrightarrow{\T_{x}^{(n,K)}})-\lambda^{(\mathfrak T,K)}(\overrightarrow{\T_{\Upsilon_{n,K}(x)}^{(n,K)}})}>\epsilon/2\Bigr]=0,
 \end{equation}
 where we emphasize that the constants $\sigma_d$ and $\sigma_{\phi}$ of condition $(G)_{\sigma_d,\sigma_{\phi}}$ do not appear because the previous equation contains no information about the embedding (hence no constant $\sigma_{\phi}$) and $\lambda^{(n,K)}$ and $\lambda^{(K)}$ are normalized to have total mass 1 (hence the multiplicative constant $\sigma_d\neq 0$ appearing in the distance scaling is irrelevant).
 
 Using~\eqref{a_fin1}, \eqref{a_fin2} and~\eqref{a_fin3}, we can see that 
 \[
  \limsup_{n\to \infty} {\bf P}^{(K)}\Bigl[ \sup_{x\in \T^{(n,K)}} \abs{\lambda^{(n,K)}(\overrightarrow{\T_x^{(n,K)}}) -\lambda^{(\mathfrak{T},K)}(\overrightarrow{\T_x^{(K)}})} >\epsilon]  \leq 2\epsilon',
  \]
 and since the left-hand probability does not depend on $K'$, this equation is valid for any $\epsilon'>0$ and in particular by letting $\epsilon'$ go to $0$
we get the lemma.
\end{proof}

\subsection{Proof of condition (V)}

Our goal is to prove that
\begin{proposition}\label{prop_cond_V}
A sequence of random augmented graphs $(G_n,(V^n_i)_{i\in\N})_{n\in\N}$ verifying either conditions $(G)_{\sigma_d,\sigma_{\phi}}$ or $(G)_{\sigma_d,\sigma_{\phi}}^h$ with points $V_i^n$ chosen according to uniform edge-volume also verifies condition $(V)$. More precisely, we have
\[
 \limsup_{K\to \infty}\limsup_{n\to \infty} {\mathbb P}\Bigl[ \sup_{x\in \T^{(n,K)}} \abs{\lambda^{(n,K)}(\overrightarrow{\T_x^{(n,K)}}) -\mu^{(n,K)}(\overrightarrow{\T_x^{(n,K)}})} >\epsilon\Bigr] =0
 \]
\end{proposition}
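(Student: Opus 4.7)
The plan is to insert the empirical measure $\hat\mu^{(n,K,K_1,K_2)}$ built in Section~\ref{sect_gv} and its continuum analog $\hat\mu^{(\mathfrak{T},K,K_1,K_2)}$ as bridges between $\mu^{(n,K)}$ and $\lambda^{(n,K)}$. Writing $A_x^{(n)}=\overrightarrow{\T_x^{(n,K)}}$ and $A_x^{(\mathfrak{T})}=\overrightarrow{\mathfrak{T}_{\Upsilon_{n,K}(x)}^{(K)}}$, the triangle inequality gives
\begin{align*}
\sup_{x}\bigl|\lambda^{(n,K)}(A_x^{(n)})-\mu^{(n,K)}(A_x^{(n)})\bigr|
&\leq d_{TV}(\hat\mu^{(n,K,K_1,K_2)},\mu^{(n,K)})\\
&\quad+\sup_x\bigl|\hat\mu^{(n,K,K_1,K_2)}(A_x^{(n)})-\hat\mu^{(\mathfrak{T},K,K_1,K_2)}(A_x^{(\mathfrak{T})})\bigr|\\
&\quad+d_{TV}(\hat\mu^{(\mathfrak{T},K,K_1,K_2)},\mu^{(\mathfrak{T},K)})\\
&\quad+\sup_x\bigl|\mu^{(\mathfrak{T},K)}(A_x^{(\mathfrak{T})})-\lambda^{(\mathfrak{T},K)}(A_x^{(\mathfrak{T})})\bigr|\\
&\quad+\sup_x\bigl|\lambda^{(\mathfrak{T},K)}(A_x^{(\mathfrak{T})})-\lambda^{(n,K)}(A_x^{(n)})\bigr|.
\end{align*}
The first and third terms are Lemmas~\ref{lem_mu_approx} and~\ref{lem_mu_approx_cont}, the last term is Lemma~\ref{a_lem_tech}, and the second term vanishes on a high-probability event because both $\hat\mu^{(\cdot,K,K_1,K_2)}$ are combinatorial functionals of the shape of $T^{(\cdot,K+K_1+K_2)}$ (their atoms are branching vertices, their weights are counts of which additional leaves project onto each atom), so the Skorokhod coupling of condition $(G)$, under which these shapes coincide with probability tending to~$1$, identifies the two empirical measures through $\Upsilon_{n,K}$.

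The remaining and genuinely new ingredient is the continuum analog of $(V)$, namely
\begin{equation}\label{eq:exact}
\lim_{K\to\infty}\mathbb{P}\left[\sup_{y\in\mathfrak{T}^{(K)}}\bigl|\lambda^{(\mathfrak{T},K)}(\overrightarrow{\mathfrak{T}_y^{(K)}})-\mu^{(\mathfrak{T},K)}(\overrightarrow{\mathfrak{T}_y^{(K)}})\bigr|\geq\epsilon\right]=0,
\end{equation}
together with its $\mathfrak{T}^h$ counterpart. I would prove this using the $\delta$-dense set reduction from Step~1 of the proof of Lemma~\ref{a_lem_tech}: choose a large auxiliary $K'$ so that, with probability at least $1-\epsilon$, the $K'$ reduction points are $\delta$-dense in $\mathfrak{T}^{(K)}$, and thus reduce the supremum to the finite set $V(\mathfrak{T}^{(K')})$ up to an error of $O(\delta)$. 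On this finite set, the bound follows from the line-breaking construction of the CRT, which implies that each bubble of $\mathfrak{T}^{(K)}$ carries $\lambda_{\mathfrak{T}}$-mass asymptotically proportional to its $d_{\mathfrak{T}}$-length as $K\to\infty$, i.e.\ that the pushforward $\pi^{\mathfrak{T}^{(K)}}_{*}\lambda_{\mathfrak{T}}=\mu^{(\mathfrak{T},K)}$ approaches $\lambda^{(\mathfrak{T},K)}$.

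Putting everything together: fix $\epsilon,\eta>0$, choose $K$ large so that \eqref{eq:exact} holds with probability at least $1-\eta$, choose $K_1=K_1(K,\epsilon,\eta)$ and $K_2=K_2(K,K_1,\epsilon,\eta)$ so that the $d_{TV}$-bounds of Lemmas~\ref{lem_mu_approx} and~\ref{lem_mu_approx_cont} hold at level $\epsilon$ with probability at least $1-\eta$, and finally let $n\to\infty$ along the Skorokhod coupling. The triangle inequality yields a total error of $O(\epsilon)$ with probability at least $1-O(\eta)$, and sending $\eta,\epsilon\to 0$ closes the argument. The main obstacle is promoting the pointwise-obvious continuum statement \eqref{eq:exact} to its uniform-in-$y$ form; this forces the use of the $\delta$-dense reduction of Lemma~\ref{a_lem_tech}, and once it is in hand, everything else is a careful orchestration of the total-variation estimates already established in Section~\ref{sect_gv}.
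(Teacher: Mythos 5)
Your proposal is correct and follows essentially the same route as the paper: the same chain of approximations through $\hat\mu^{(n,K,K_1,K_2)}$ and $\hat\mu^{(\mathfrak{T},K,K_1,K_2)}$ via Lemmas~\ref{lem_mu_approx}, \ref{lem_mu_approx_cont} and~\ref{a_lem_tech}, the same shape-measurability argument identifying the two empirical measures under the coupling of condition $(G)$, and the same reduction to the continuum statement \eqref{eq:exact}. The paper establishes \eqref{eq:exact} using non-atomicity of $\lambda_{\mathfrak{T}}$ together with item (ii) of Theorem 3 of Aldous (short edges, leaf-density of the reduced trees, and weak convergence of the reduced-tree length measure to $\lambda_{\mathfrak{T}}$) rather than the literal $\delta$-dense machinery of Lemma~\ref{a_lem_tech}, but this is the same idea as your reduction to finitely many points followed by the convergence of $\lambda^{(\mathfrak{T},K)}$ to $\lambda_{\mathfrak{T}}$.
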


\begin{proof}
We will discuss the proof having the case $(G)_{\sigma_d,\sigma_\phi}$ in mind, making the necessary remarks for $(G)_{\sigma_d,\sigma_\phi}^h$ when necessary. Recall the definition of ${\bf P}^{(K)}$ from displays \eqref{eq:couplinggeometry} and \eqref{eq:couplinggeometryh}.
For any $\epsilon>0$, we can use Lemma~\ref{a_lem_tech} to see that
\begin{align*}
& \limsup_{n\to \infty} {\bf P}^{(K)}\Bigl[ \sup_{x\in \T^{(n,K)}} \abs{\lambda^{(n,K)}(\overrightarrow{\T_x^{(n,K)}}) -\mu^{(n,K)}(\overrightarrow{\T_x^{(n,K)}})} >4\epsilon\Bigr] \\
\leq & \limsup_{n\to \infty} {\bf P}^{(K)}\Bigl[ \sup_{x\in \T^{(n,K)}} \abs{\lambda^{(\mathfrak{T},K)}(\overrightarrow{\mathfrak T_{\Upsilon_{n,K}(x)}^{(K)}}) -\mu^{(n,K)}(\overrightarrow{\T_x^{(n,K)}})} >3\epsilon\Bigr]. 
\end{align*}
Under condition $(G)_{\sigma_d,\sigma_\phi}^h$ the same holds true with $\frak T^h$ in place of $\frak T$.

Furthermore, using Lemma~\ref{lem_mu_approx}, for any $\epsilon'>0$ there exists $K_1,K_2<\infty$ such that
\begin{align*}
& \limsup_{n\to \infty} {\bf P}^{(K)}\Bigl[ \sup_{x\in \T^{(n,K)}} \abs{\lambda^{(\mathfrak{T},K)}(\overrightarrow{\mathfrak T_{\Upsilon_{n,K}(x)}^{(K)}}) -\mu^{(n,K)}(\overrightarrow{\T_x^{(n,K)}})} >4\epsilon\Bigr] \\
\leq & \limsup_{n\to \infty} {\bf P}^{(K)}\Bigl[ \sup_{x\in \T^{(n,K)}} \abs{\lambda^{(\mathfrak{T},K)}(\overrightarrow{\mathfrak T_{\Upsilon_{n,K}(x)}^{(K)}}) -\hat{\mu}^{(n,K,K_1,K_2)}(\overrightarrow{\T_x^{(n,K)}})} >2\epsilon\Bigr] +\epsilon'.
\end{align*}
Again, the same holds true with $\frak T^h$ in place of $\frak T$ under condition $(G)_{\sigma_d,\sigma_\phi}^h$.

We can notice that for all $x\in \T^{(n,K)}$ the value of $ \hat{\mu}^{(n,K,K_1,K_2)}(\overrightarrow{\T_x^{(n,K)}})$ only depends on the shape $T^{(n,K+K_1+K_2)}$ (where the definition of shape can be found in Definition~\ref{def_gst}) . This means, that we can use condition $(G)_{\sigma_d,\sigma_{\phi}}$ to see that for any $K,K_1,K_2$ and for any $\epsilon>0$ 

\[
\limsup_{n\to \infty} {\bf P}^{(K)}\Bigl[\text{ for all $x\in \T^{(n,K)}$},\   \hat{\mu}^{(n,K,K_1,K_2)}(\overrightarrow{\T_x^{(n,K)}}) = \hat{\mu}^{(\mathfrak{T},K,K_1,K_2)}(\overrightarrow{\mathfrak T_{\Upsilon_{n,K}(x)}^{(K)}}) \Bigr]=1.
\]

Hence, the three previous equations become
\begin{align*}
&  \limsup_{n\to \infty} {\bf P}^{(K)}\Bigl[ \sup_{x\in \T^{(n,K)}} \abs{\lambda^{(n,K)}(\overrightarrow{\T_x^{(n,K)}}) -\mu^{(n,K)}(\overrightarrow{\T_x^{(n,K)}})} >4\epsilon\Bigr] \\
\leq & \limsup_{n\to \infty} {\bf P}^{(K)}\Bigl[ \sup_{x\in \T^{(n,K)}} \abs{\lambda^{(\mathfrak{T},K)}(\overrightarrow{\mathfrak T_{\Upsilon_{n,K}(x)}^{(K)}}) -\hat{\mu}^{(\mathfrak{T},K,K_1,K_2)}(\overrightarrow{\mathfrak T_{\Upsilon_{n,K}(x)}^{(K)}})} >2\epsilon\Bigr] +\epsilon'.
\end{align*}

However, recalling Lemma~\ref{lem_mu_approx_cont}, we can transform the previous equation into (by possibly increasing $K_1$ and $K_2$)
\begin{align*}
&  \limsup_{n\to \infty} {\bf P}^{(K)}\Bigl[ \sup_{x\in \T^{(n,K)}} \abs{\lambda^{(n,K)}(\overrightarrow{\T_x^{(n,K)}}) -\mu^{(n,K)}(\overrightarrow{\T_x^{(n,K)}})} >3\epsilon\Bigr] \\
\leq & \limsup_{n\to \infty} {\bf P}^{(K)}\Bigl[ \sup_{x\in \T^{(n,K)}} \abs{\lambda^{(\mathfrak{T},K)}(\overrightarrow{\mathfrak T_{\Upsilon_{n,K}(x)}^{(K)}}) -\mu^{(\mathfrak{T},K)}(\overrightarrow{\mathfrak T_{\Upsilon_{n,K}(x)}^{(K)}})  }>\epsilon\Bigr] + 2\epsilon',
\end{align*}
and since the $K_1$, $K_2$ dependence has disappeared, we  can let $\epsilon'$ go to $0$ and see that 
\begin{align*}
&  \limsup_{n\to \infty} {\bf P}^{(K)}\Bigl[ \sup_{x\in \T^{(n,K)}} \abs{\lambda^{(n,K)}(\overrightarrow{\T_x^{(n,K)}}) -\mu^{(n,K)}(\overrightarrow{\T_x^{(n,K)}})} >3\epsilon\Bigr] \\
\leq & {\bf P}^{(K)}\Bigl[ \sup_{x\in \T^{(n,K)}} \abs{\lambda^{(\mathfrak{T},K)}(\overrightarrow{\mathfrak T_{\Upsilon_{n,K}(x)}^{(K)}}) -\mu^{(\mathfrak{T},K)}(\overrightarrow{\mathfrak T_{\Upsilon_{n,K}(x)}^{(K)}})  }>\epsilon\Bigr] .
\end{align*}
The same holds true with $\frak T^h$ in place of $\frak T$ under condition $(G)_{\sigma_d,\sigma_\phi}^h$.

This last equation is an estimate on the CRT. Since $\Upsilon_{n,K}$ is a homeomorphism from $\T^{(n,K)}$ to $\mathfrak{T}^{(K)}$, it only remains to be shown that
\begin{equation}\label{eq:exact}
\lim_{K\to \infty} {\bf P}^{(K)}\Bigl[ \sup_{x\in \mathfrak{T}^{(K)}} \abs{\lambda^{(\mathfrak{T},K)}(\overrightarrow{\mathfrak T_x^{(K)}}) -\mu^{(\mathfrak{T},K)}(\overrightarrow{\mathfrak T_x^{(K)}})  }>\epsilon\Bigr] =0.
\end{equation}
and that
\begin{equation}\label{eq:exacth}
\lim_{K\to \infty} {\bf P}^{(K)}\Bigl[ \sup_{x\in \mathfrak{T}^{h,K}} \abs{\lambda^{(\mathfrak{T}^h,K)}(\overrightarrow{\mathfrak T_x^{h,K}}) -\mu^{(\mathfrak{T}^h,K)}(\overrightarrow{\mathfrak T_x^{h,K}})  }>\epsilon\Bigr] =0
\end{equation}
for condition $(G)_{\sigma_d,\sigma_\phi}^h$.

Let $\eta>0$ be arbitrary. It suffices to show that
\begin{equation}\label{eq:manolo}
{\bf P}^{(K)}\left[\sup_{x\in\frak{T}}\abs{\lambda^{(\mathfrak T,K)}(\overrightarrow{\frak{T}^{(K)}_x})-\mu^{(\mathfrak T,K)}(\overrightarrow{\frak{T}_x^{(K)}})}\leq \epsilon\right]\geq 1-\eta
\end{equation}
for all $K$ large enough.

Fixing $\epsilon>0$, there exists a (possibly random) $\delta^\ast>0$ such that 
\[
\sup_{x\in \frak{T}} \lambda_{\frak T}(B(x,\delta^\ast))\leq \epsilon/6.
\]
The display above can be easily deduced using the compactness of $\frak{T}$ and the fact that $\lambda_{\frak{T}}$ has no atoms.
Therefore there exists $\delta$ small enough such that
\begin{equation}\label{eq:noatoms}
{\mathbb P}\left[\sup_{x\in \frak{T}} \lambda_\frak T(B(x,\delta))\leq \epsilon/6\right]\geq 1-\frac{\eta}{16}
\end{equation}
On the one hand, it is easy to see from \eqref{a_delta_blabla1} that, for any $\eta>0$, there exists $K_1$ such that, for any $K\geq K_1$, the decomposition of $\frak{T}^{(K)}$ into non-overlapping line-segments $[a_i,b_i],i=1,\dots,2K-1$ satisfies 
\begin{equation}\label{eq:Ineedthis}
\mathbf{P}^{(K)}\left[\max_{i=1,\dots,2K-1}d_{\frak{T}}(a_i,b_i)\leq \delta/2\right]\geq 1-\frac{\eta}{32}
\end{equation}
   On the other hand, by item (ii) of Theorem 3 in \cite{Al1}, for any $\eta>0$ there exists $K_2$ such that for any $K\geq K_2$, 
 \[{\bf P}^{(K)}\left[\max_{x\in\frak{T}}d_{\frak{T}}(x,\pi^{\frak T^{(K)}}(x))\leq \delta/2\right]\geq 1-\frac{\eta}{32}.\]
 Let $K_3=K_1\vee K_2$.
 By the two displays above, we have that 
 \[{\bf P}^{(K)}\left[\{x:\pi^{\frak T^{(K_3)}}(x)\in[a_i,b_i]\}\subseteq B(a_i,\delta), \quad \forall i=1,\dots,2K_3-1\right]\geq 1-\frac{\eta}{16}.
 \] 
 Therefore, by \eqref{eq:noatoms}
 \begin{equation}\label{eq:gini}
 {\bf P}^{(K)}\left[\mu^{(\mathfrak T,K_3)}([a_i,b_i])\leq \epsilon/6,\quad \forall i=1,\dots, 2K_3-1\right]\geq 1-\frac{\eta}{8}.
 \end{equation}
  
  Furthermore, by item (ii) of Theorem 3 in \cite{Al1},  $\lambda^{(\mathfrak T,K)}$ (when regarded as a measure over $\frak{T}$) converges almost surely in distribution to $\lambda_\frak T$ as $K\to\infty$.
  Therefore, since $\{x:\pi^{\frak T^{(K_3)}}(x)\in[a_i,b_i]\}$ is a closed set, 
  \[\limsup_{K\to\infty}\lambda^{(\frak T,K)}(\{x:\pi^{\frak T^{(K_3)}}(x)\in[a_i,b_i]\})\leq\lambda_\frak T(\{x:\pi^{\frak T^{(K_3)}}(x)\in[a_i,b_i]\}),\]
  for all $i=1,\dots,2K_3-1$
Hence, by \eqref{eq:gini}, there exists  $K^*$ such that for $K>K^*$,
  \begin{equation}\label{eq:gini2}
{\bf P}^{(K)}\left[\max_{i=1,\dots,2K_3-1}\lambda^{(\frak T,K)}(\{x:\pi^{\frak T^{(K_3)}}(x)\in[a_i,b_i]\})\leq \epsilon/3\right]\geq 1-\frac{\eta}{4}.
 \end{equation}
 
By the convergence in distribution of $\lambda^{(\frak T,K)}$ towards $\lambda_\frak T$, and the fact that the boundary of $\frak{T}_{a_i}$ is $a_i$ and $\lambda_\frak T(a_i)=0$,
\[
\lim_{K\to\infty}\max_{i=1,\dots,2K_3-1} \abs{\lambda^{(\frak T,K)}(\overrightarrow{\frak{T}}_{a_i})-\lambda_\frak T(\overrightarrow{\frak{T}}_{a_i})}=0.
\]
Therefore, for $K$ large enough we have that
\begin{equation}\label{eq:hailmary}
{\bf P}^{(K)}\left[\max_{i=1,\dots,2K_3-1}\abs{\lambda^{(\frak T,K)}(\overrightarrow{\frak{T}}_{a_i})-\lambda_\frak T(\overrightarrow{\frak{T}}_{a_i})}\geq \epsilon/3\right]\geq 1-\frac{\eta}{8}.
\end{equation}
Let $x\in \frak{T}$, let $i\in\{1,\dots, 2K_3-1\}$ be the index such that $\pi^{\frak T^{(K_3)}}(x)\in[a_i,b_i]$. We will discern according to whether $x\in\frak{T}^{(K_3)}$ or not. Assume $x\in\frak{T}^{(K_3)}$. We write
\begin{equation}\label{eq:triangle}
\begin{aligned}
&\sup_{x\in\frak{T}^{(K_3)}}\abs{\lambda^{(\frak T,K)}(\overrightarrow{\frak{T}}_x)-\mu^{(\frak T,K)}(\overrightarrow{\frak{T}}_x)}
\leq\sup_{x\in\frak{T}^{(K_3)}}\abs{\lambda^{(\frak T,K)}(\overrightarrow{\frak{T}}_x)-\lambda^{(\frak T,K)}(\overrightarrow{\frak{T}}_{a_i})}\\+&\sup_{x\in\frak{T}^{(K_3)}}\abs{\lambda^{(\frak T,K)}(\overrightarrow{\frak{T}}_{a_i})-\mu^{(\frak T,K)}(\overrightarrow{\frak{T}}_{a_i})}+
\sup_{x\in\frak{T}^{(K_3)}}\abs{\mu^{(\frak T,K)}(\overrightarrow{\frak{T}}_{a_i})-\mu^{(\frak T,K)}(\overrightarrow{\frak{T}}_{x})}.
\end{aligned}
\end{equation}
Moreover, since $x\in\frak{T}^{(K_3)}$,
\[\overrightarrow{\frak{T}}_{a_i}\setminus\overrightarrow{\frak{T}}_x\subseteq \{x:\pi^{\frak T^{(K_3)}}(x)\in[a_i,b_i]\}.\]
Furthermore, for any $x\in\frak{T}$ we have $\overrightarrow{\frak{T}_x}\subseteq\overrightarrow{\frak{T}_{a_i}}$. Therefore $\abs{\lambda^{(\frak T,K)}(\overrightarrow{\frak{T}}_x)-\lambda^{(\frak T,K)}(\overrightarrow{\frak{T}}_{a_i})}=\lambda^{(\frak T,K)}(\overrightarrow{\frak{T}_{a_i}}\setminus\overrightarrow{\frak{T}}_x)$ and $ \abs{\mu^{(\frak T,K)}(\overrightarrow{\frak{T}}_{a_i})-\mu^{(\frak T,K)}(\overrightarrow{\frak{T}}_{x})}=\mu^{(\frak T,K)}(\overrightarrow{\frak{T}_{a_i}}\setminus\overrightarrow{\frak{T}}_x)$.
Hence, from the display above, it follows that
\[\abs{\lambda^{(\frak T,K)}(\overrightarrow{\frak{T}}_x)-\lambda^{(\frak T,K)}(\overrightarrow{\frak{T}}_{a_i})}\leq \lambda^{(\frak T,K)}[\{x:\pi^{\frak T^{(K_3)}}(x)\in[a_i,b_i]\}]\]
and
\[\abs{\mu^{(\frak T,K)}(\overrightarrow{\frak{T}}_x)-\mu^{(\frak T,K)}(\overrightarrow{\frak{T}}_{a_i})}\leq \mu^{(\frak T,K)}[\{x:\pi^{\frak T^{(K_3)}}(x)\in[a_i,b_i]\}]\]
Therefore, from displays \eqref{eq:gini} and \eqref{eq:gini2}, we get that the first and last summand of \eqref{eq:triangle} are smaller than $\epsilon/3$ (outside an event of probability smaller than $3/8\eta$). 
Therefore, by \eqref{eq:hailmary}, 
\begin{equation}\label{eq:pre}
{\bf P}^{(K)}\left[\sup_{x\in\frak{T}^{(K_3)}}\abs{\lambda^{(\frak T,K)}(\overrightarrow{\frak{T}}_x)-\mu^{(\frak T,K)}(\overrightarrow{\frak{T}}_{x})}\leq \epsilon\right]\geq 1-\eta/2.
\end{equation}

If $x\notin\frak{T}^{(K_3)}$, then, both $\frak{T}^{(K_3)}_x$ and $\frak{T}^{(K_3)}$ are contained in $\{x\in\frak{T}:\pi^{\frak T^{(K_3)}}(x)\in[a_i,b_i]\}$. Therefore
\begin{align*}
&\abs{\mu^{(\frak T,K)}(\overrightarrow{\frak{T}}_{x})-\lambda^{(\frak T,K)}(\overrightarrow{\frak{T}}_{x})}\\
\leq &\lambda^{(\frak T,K)}(\{x\in\frak{T}:\pi^{\frak T^{(K_3)}}(x)\in[a_i,b_i]\})\vee\mu^{(\frak T,K)}(\{x\in\frak{T}:\pi^{\frak T^{(K_3)}}(x)\in[a_i,b_i]\}).
\end{align*}
Finally, by \eqref{eq:gini} and \eqref{eq:gini2} we have that 
\[
{\bf P}^{(K)}\left[\sup_{x\notin\frak{T}^{(K_3)}}\abs{\lambda^{(\frak T,K)}(\overrightarrow{\frak{T}}_x)-\mu^{(\frak T,K)}(\overrightarrow{\frak{T}}_{x})}\leq \epsilon\right]\geq 1-\eta/2.
\]
We have established \eqref{eq:manolo} which is what we aimed to do.
 
 The proof for \eqref{eq:exacth} is completely analogous and that finishes the proof.

\end{proof}

\section{Application to lattice trees} \label{sect_lattice_trees}

In this section we apply Theorem~\ref{thm:height2} to the case of lattice trees. We will apply this theorem for points $(V_n^i)_{n\in \N}$ which are i.i.d.~uniformly chosen in the graphs $G_n$. 

The application of Theorem~\ref{thm:height2} will provide our main result on lattice trees (Theorem~\ref{main_thm}). For this, we need to show the following:
\begin{enumerate}
\item the points $(V_n^i)_{n\in \N}$ are chosen according to uniform edge-volume, 
\item condition $(G)^{h,+}_{1,\sigma_0,C_0}$ is verified, where $C_0$ is defined above equation (1.3) in~\cite{CFHP} and $\sigma_0$ is defined below (1.8) in the same article. 
\item condition $(S)$ is verified, 
\item condition $(R)_1$ is verified.
\end{enumerate}

The values of $C_1$ and $C_2$ appearing in Theorem~\ref{main_thm} will be
\[
C_1=  \sigma_0 \text{ and } C_2=C_0^{-1}.
\]

The remainder of this section is dedicated to proving the four points listed above.

\subsection{Uniformly chosen points are chosen according to uniform edge-volume}

Since $G_n$ is a lattice tree, and hence a tree,  we know that for $B\subset V(G_n)$ such that  $B^*$  is connected we know that $1+\abs{E(B^*)}=\abs{B}$ and that $1+\abs{E(G_n)}=\abs{G_n}$. In particular
\[
\frac{\abs{E(B^*)}}{\abs{E(G_n)}}=\frac{1+\abs{B}}{1+\abs{G_n}} \text{ and } \abs{\frac{1+\abs{B}}{1+\abs{G_n}}-\frac{\abs{B}}{\abs{G_n}}}\leq \frac 1{\abs{G_n}},
\]
now, since ${\mathbb P}[V_n^i\mid G_n]=\frac{\abs{B}}{\abs{G_n}}$ and $\abs{G_n}\geq n$ we know that for any $\epsilon>0$
\[
\lim_{n\to \infty}  {\mathbb P}\Bigl[\max_{\substack{B\subset V(G_n) \\ B^* \text{connected}}} \abs{{\mathbb P}[V_1 \in B\mid G_n] -\frac{\abs{E(B^*)}}{\abs{E(G_n)}} }>\epsilon\Bigr]=0.
\]

\subsection{Graphs are asymptotically tree-like}

In several of our conditions, we require that the graphs $\T^{(n,K)}$ are asymptotically tree-like with probability going to 1 (see Definition~\ref{def_thin}). Recalling Remark~\ref{skeleton_lat}, we see that $\T^{(n,K)}$  is always tree-like  for lattice trees.

\subsection{Condition $(G)^{h,+}$}

Considering Remark~\ref{skeleton_lat2} and Remark~\ref{skeleton_lat3}, we can see that the construction of the graph spatial tree $(T^{(n,K)},\sigma_{d} d_{T^{(n,K)}},\sigma_{\phi} \sqrt{\sigma_{d} } \phi_{T^{(n,K)}})$ is exactly the same as the construction of $\mathcal{B}_k(W)$ described in example 17 in the recent article~\cite{CFHP} (in that example the scaling factor $n$ was dropped from the notation for simplicity). Also, as pointed out in Remark \ref{rk:equivalence}, the corresponding continuous skeletons $\mathcal{B}_K(W)$ and $\mathfrak{B}^{(K)}$ have the same law.

With this in mind, we see that Remark~18 of said article (which uses as a key input the historical $k$-point function for lattice trees from~\cite{CFHP2}), shows that Condition $(G)^{h,+}_{1,\sigma_0,C_0}$ holds in our context (critical lattice trees conditioned on height, where the skeleton is built using vertices chosen according to the uniform measure).

\subsection{Condition $(S)$}

In~\cite{CFHP}, Theorem 19 (b) and (c) correspond to the second and third properties of Condition $(S)$. This means Condition $(S)$ is verified (since the \lq\lq tree-like\rq\rq~condition is always verified for lattice trees).

\subsection{Condition $(R)$}

Since $G_n$ is a tree, it is clear that $R^{G_n}_{\text{eff}}(\cdot,\cdot)=d_{G_n}(\cdot,\cdot)$ and thus condition $(R)_1$ is verified.

\bibliographystyle{plain}

\end{document}